\newtheorem{theorem}{Theorem}[section]
\newtheorem{lemma}[theorem]{Lemma}
\newtheorem{definition}[theorem]{Definition}
\newtheorem{example}[theorem]{Example}
\theoremstyle{corollary}
\newtheorem{corollary}[theorem]{Corollary}
\theoremstyle{conjecture}
\newtheorem{conjecture}{Conjecture}
\newtheorem*{claim}{Claim}
\theoremstyle{assumption}
\newtheorem{assumption}{Assumption}
\theoremstyle{proposition}
\newtheorem{proposition}[theorem]{Proposition}
\theoremstyle{remark}
\newtheorem{remark}[theorem]{Remark}
\numberwithin{equation}{section}
\newcommand{\olsi}[1]{\,\overline{\!{#1}}} 
\newcommand{\ols}[1]{\mskip.5\thinmuskip\overline{\mskip-.5\thinmuskip {#1} \mskip-.5\thinmuskip}\mskip.5\thinmuskip} 
\newcommand{\ii}{\ensuremath{\sqrt{-1}}}
\newcommand{\pp}{\bar\partial}
 \date{\today}
\begin{document}

\title[\resizebox{5.5in}{!}{Hermitian-Yang-Mills connections on some complete non-compact K\"ahler manifolds}]{Hermitian-Yang-Mills connections on some complete non-compact K\"ahler manifolds}
\author{Junsheng Zhang}
\address{Department of Mathematics\\
 University of California\\
 Berkeley, CA, USA, 94720\\}
\curraddr{}
\email{jszhang@berkeley.edu}
\thanks{}


\begin{abstract}We give an algebraic criterion for the existence of projectively Hermitian-Yang-Mills metrics on a holomorphic vector bundle $E$ over some complete non-compact  K\"ahler manifolds $(X,\omega)$, where $X$ is the complement of a divisor in a compact K\"ahler manifold and we impose some conditions on the cohomology class and the asymptotic behaviour of the K\"ahler form $\omega$. We introduce the notion of stability with respect to a pair of $(1,1)$-classes which generalizes the standard slope stability. We prove that this new stability condition is both sufficient and necessary for the existence of projectively Hermitian-Yang-Mills metrics in our setting.
 \end{abstract}

\maketitle
\section{Introduction}

The celebrated Donaldson-Uhlenbeck-Yau theorem \cite{donaldson1985,uhlenbeck-yau} says that on a compact K\"ahler manifold $(X,\omega)$, an irreducible holomorphic vector bundle $E$ admits a Hermitian-Yang-Mills (HYM) metric if and only if it is $\omega$-stable. After this pioneering work, there have been several results aiming to generalize this to non-compact K\"ahler manifolds \cite{simpson,bando,ni-ren,ni,jacob-walpuski,mochizuki}. A key issue is to understand what role  stability plays on the existence of projectively Hermitian-Yang-Mills (PHYM) metrics. An interesting special case in the non-compact setting is when $(X,E)$ both can be compactified, i.e. $X$ is the complement of a divisor in a compact K\"ahler manifold $\olsi{X}$ and $E$ is the restriction of a holomorphic vector bundle $\olsi{E}$ on $\olsi{X}$, and when the K\"ahler metric has a known asymptotic behaviour. Under these assumptions, one wants to build a relation between the existence of PHYM metrics on $E$ and some algebraic data on $\olsi E$. In this paper, we prove a result in this setting.

Let $\olsi{X}$ be an $n$-dimensional ($n\geq 2$) compact K\"ahler  manifold, $D$ be a smooth divisor and $X=\olsi{X}\backslash D$ denote the complement of $D$ in $\olsi{X}$. Let $\olsi{E}$ be a holomorphic vector bundle on $\olsi{X}$, which we always assume to be irreducible unless otherwise mentioned. Let $E$, $\olsi{E}|_D$ denote its restriction to $X$ and $D$ respectively. 
Suppose the normal bundle $N_D$ of $D$ in $\olsi X$ is ample. On $X$ we consider complete K\"ahler metrics $\omega=\omega_0+dd^c\varphi$ satisfying \textit{Assumption 1} (see Section \ref{assumption on the asymptotic behaviour} for a precise definition). Roughly speaking, we assume $\omega_0$ is a smooth closed (1,1)-form on $\olsi{X}$ vanishing when restricted to $D$, $\varphi$ is a smooth function on $X$, and $\omega$ is asymptotic to some model K\"ahler metrics given explicitly on the punctured disc bundle of $N_D$. Typical examples satisfying these assumptions are Calabi-Yau metrics on the complement of an anticanonical divisor of a Fano manifold and its generalizations \cite{tian-yau1,hsvz1,hsvz2} (see Section \ref{examples} for a sketch).

To state our theorem, we need two ingredients:  the existence of a good initial hermitian metric on $E$ and the definition for stability with respect to a pair of classes. The following lemma is proved in Section \ref{existence of good initial metric}.
\begin{lemma}\label{good initial metric}
	If $\olsi{E}|_{D}$ is $c_1(N_D)$-polystable, then there is a hermitian metric $H_0$ on $E$ satisfying:
	\begin{itemize}
		\item [(1).] there is a hermitian metric $\olsi{H}_0$ on $\olsi{E}$ and a function $f\in C^{\infty}(X)$ such that $H_0={e}^f\olsi{H}_0$,
	\item [(2).] $|\Lambda_{\omega}F_{H_0}|=O(r^{-N_0})$, where $r$ denotes the distance function to a fixed point induced by the metric $w$ and $N_0$ is the number in \textit{Assumption 1}-(3).
	\end{itemize}
\end{lemma}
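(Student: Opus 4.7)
The plan is to solve the HYM problem on the compact divisor $D$, transplant the solution to $\olsi X$, and use the conformal factor $e^f$ to correct the residual mean curvature on $X$.

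Since $\olsi E|_D$ is $c_1(N_D)$-polystable and $D$ is compact K\"ahler, the classical Donaldson-Uhlenbeck-Yau theorem, applied on $(D,\omega_D)$ for any K\"ahler form $\omega_D\in c_1(N_D)$ (one may take the $\omega_D$ that appears in the asymptotic model of $\omega$), produces a Hermitian-Einstein metric $H_D$ on $\olsi E|_D$ satisfying $\Lambda_{\omega_D}F_{H_D}=\lambda\cdot\mathrm{id}$ for the slope $\lambda$; polystability gives $H_D$ as an orthogonal sum of HE metrics on the stable summands. I then extend $H_D$ to a Hermitian metric $\olsi H_0$ on $\olsi E\to \olsi X$: identify a tubular neighbourhood of $D\subset \olsi X$ with a disc sub-bundle of $N_D$, pull $H_D$ back along the bundle projection to obtain an extension that is ``flat in the transverse direction'' to first order along $D$, and glue to an arbitrary smooth metric on $\olsi E$ by a partition of unity. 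This already gives condition (1) with $f\equiv 0$.

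Next, I compute $\Lambda_\omega F_{\olsi H_0}$ using the model for $\omega$ from Assumption 1. In the disc-bundle coordinates near $D$, $\omega$ splits into a horizontal part asymptotic to the pullback of $\omega_D$, a vertical part given by the explicit model ansatz, and an error of size $O(r^{-N_0})$. Since $F_{\olsi H_0}$ is a smooth $\operatorname{End}(\olsi E)$-valued $(1,1)$-form on $\olsi X$ whose horizontal restriction to $D$ equals $F_{H_D}$, the horizontal contraction reproduces $\Lambda_{\omega_D}F_{H_D}=\lambda\cdot\mathrm{id}$ up to a controlled error, while the vertical and mixed contractions $g^{w\olsi w}F_{w\olsi w}$ and $g^{i\olsi w}F_{i\olsi w}$ are $O(r^{-N_0})$ because the corresponding blocks of $g^{-1}$ decay at the prescribed rate. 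The upshot is $\Lambda_\omega F_{\olsi H_0}=\lambda\cdot\mathrm{id}+O(r^{-N_0})$ on $X$; in particular the trace-free part of the mean curvature is already $O(r^{-N_0})$.

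Finally I choose $f$. A conformal change $H_0=e^f\olsi H_0$ gives $F_{H_0}=F_{\olsi H_0}-\partial\pp f\cdot \mathrm{id}_E$, hence $\Lambda_\omega F_{H_0}=\Lambda_\omega F_{\olsi H_0}-(\Lambda_\omega\partial\pp f)\cdot \mathrm{id}_E$, and it suffices to produce $f\in C^\infty(X)$ satisfying $\Lambda_\omega \partial\pp f=\lambda+O(r^{-N_0})$. An explicit radial $f$, depending only on the transverse distance variable, solves the corresponding ODE in the model Calabi-type ansatz; cutting off near $D$ and adding an arbitrary smooth function on $\olsi X$ gives the required $f$, with the discrepancy absorbed into the $O(r^{-N_0})$ term by Assumption 1. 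The main obstacle is the bookkeeping in the previous paragraph, namely tracking how $\omega^{-1}$ distributes between horizontal and vertical blocks and controlling the transverse derivatives of $\olsi H_0$; the HE property on $D$ is essential, because without it the horizontal leading term of $\Lambda_\omega F_{\olsi H_0}$ would only be bounded, and its non-scalar part could not be corrected by the scalar factor $e^f$.
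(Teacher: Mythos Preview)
Your overall strategy---apply Donaldson--Uhlenbeck--Yau on $D$, extend $H_D$ to a smooth $\olsi H_0$ on $\olsi E$, then correct by a conformal factor $e^f$---is exactly the paper's. However, your computation of the leading term of $\Lambda_\omega F_{\olsi H_0}$ is wrong, and the error is not cosmetic.

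The horizontal block of the model form is $F'(t)\,\omega_D$, not $\omega_D$; equivalently $|dz_i|^2_{\omega_F}\sim t^{1-a}\sim 1/F'(t)$. Hence for the pulled-back horizontal curvature one gets
\[
\Lambda_{\omega_F}\big(p^*F_{H_D}\big)=\frac{1}{F'(t)}\,\Lambda_{\omega_D}F_{H_D}=\frac{-\ii\lambda}{F'(t)}\,\mathrm{id},
\]
so $\Lambda_\omega F_{\olsi H_0}=\dfrac{-\ii\lambda}{F'(t)}\,\mathrm{id}+O(e^{-\delta t})$, \emph{not} $\lambda\cdot\mathrm{id}+O(r^{-N_0})$. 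Since $F'(t)\sim t^{a-1}\sim r^{2-2/a}$ and $2-2/a\le 2/n$, the term $1/F'(t)$ decays at best like $r^{-2/n}$, which is nowhere near $O(r^{-N_0})$ with $N_0\ge 8$. So this scalar piece cannot be thrown into the error; it has to be cancelled exactly by $f$, and an $f$ solving your stated equation $\Lambda_\omega\partial\pp f=\lambda+O(r^{-N_0})$ would leave behind precisely this $1/F'(t)$ discrepancy.

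The fix is immediate once you track the $F'(t)$: since $\ii\partial\pp t=\omega_D$ and $\Lambda_{\omega_F}\omega_D=(n-1)/F'(t)$, taking $f=\dfrac{\lambda}{n-1}\,t$ (with $t=-\log|S|_h^2$ extended smoothly to $X$) gives $\ii\Lambda_{\omega_F}(dd^c f)=\dfrac{\lambda}{F'(t)}$, which cancels the leading scalar on the nose. This is the explicit $f$ the paper writes down. Your remaining observations---that the trace-free part of $\Lambda_\omega F_{\olsi H_0}$ is already small because the vertical and mixed inverse-metric blocks carry a factor of $|dw|^2_{\omega_F}\le Ce^{-\delta t}$, and that the HE condition on $D$ is essential for this---are correct and match the paper's reasoning.
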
 

 We call $H_0$ conformal to a smooth extendable metric if it satisfies the first condition in Lemma \ref{good initial metric}. A key feature we use in this paper is that the induced metic on $\operatorname {End}(E)$ is conformally invariant with respect to metrics on $E$. Therefore the two hermitian metrics $H_0$ and $\olsi{H}_0$ induce the same metric on $\operatorname{End}(E)$ and this is the norm used in Lemma \ref{good initial metric}-(2). Then naturally (following \cite{simpson}) one wants to find a PHYM metric in the following set
\begin{equation}\label{define of P_H}
	\mathcal P_{H_0}=\left\{H_0 e^s: s\in C^{\infty}(X,\ii \mathfrak{su}(E,H_0)), \left\Vert s\right \Vert_{L^{\infty}}+\left\Vert \olsi{\partial}s\right\Vert_{L^{2}}< \infty\right\}.
\end{equation}
Here we use $\ii\mathfrak {su}(E,H_0)$ to denote the subbundle of $\operatorname{End}(E)$ consisting of the trace-free and self-adjoint endomorphisms with respect to $H_0$. Though $H_0$ in general is not unique, we will show that if we fix the induced metric on $\det E$, then the set $\mathcal P_{H_0}$ is uniquely determined  as long as $H_0$ satisfies conditions in Lemma \ref{good initial metric} (see Proposition \ref{uniqueness of P_H}).

Next we define stability with respect to a pair of (1,1)-classes, which generalizes the standard slope stability defined for K\"ahler classes in  \cite[Chapter 5]{kobayashi-book}. In the following, we use $\mu_{\alpha}(S)$ to denote the slope of a torsion-free coherent sheaf with respect to a class $\alpha\in H^{1,1}(M)$ on a compact K\"ahler manifold $M$ (see Section \ref{definition of stability with respect to a pair} for a more detailed discussion), i.e. 
\begin{equation*}
	\mu_{\alpha}(S):=\frac{1}{\rank (S)}\int_{M} c_1(\det S)\wedge \alpha^{n-1}.
\end{equation*}

\begin{definition}\label{definition of alpha beta stability}
	Let $M$ be a compact K\"ahler manifold, $\alpha,\beta\in H^{1,1}(M)$ be two classes, $E$ be a holomorphic vector bundle over $M$.
 We say $E$ is $(\alpha,\beta)$-stable if every coherent reflexive subsheaf $S$ of $E$ with $0<\rank (S)<\rank (E)$ satisfies either of the following conditions:
	\begin{itemize}
		\item [(a)] $\mu_{\alpha}(S)< \mu_{\alpha}(E)$, or 
		\item [(b)] $\mu_{\alpha}(S)= \mu_{\alpha}(E)$ and $\mu_{\beta}(S)<\mu_{\beta}(E)$.
	\end{itemize}
\end{definition}

The main result of this paper is
\begin{theorem}\label{main theorem}
	Let $\omega=\omega_0+dd^c\varphi$ be a K\"ahler metric satisfying the \textit{Assumption 1} in Section \ref{assumption on the asymptotic behaviour}. Suppose $\olsi{E}|_{D}$  is $c_1(N_D)$-polystable and $\mathcal P_{H_0}$ is defined by (\ref{define of P_H}). Then there exists a unique $\omega$-PHYM metric in $\mathcal P_{H_0}$ if and only if $\olsi{E}$ is $\left(c_1(D),[\omega_0]\right)$-stable.
\end{theorem}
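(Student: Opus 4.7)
Suppose $H = H_0 e^s \in \mathcal P_{H_0}$ is $\omega$-PHYM. My plan, following Donaldson--Uhlenbeck--Yau as extended to non-compact K\"ahler manifolds by Simpson, is to produce an intersection-theoretic expansion of the analytic degree. For any coherent reflexive subsheaf $\olsi S \subset \olsi E$ of intermediate rank, its restriction to $X$ determines an $L^2_1$ weakly holomorphic subbundle of $E$, and a standard Chern--Weil computation -- using the $L^\infty + L^2_1$ control on $s$ together with the decay $|\Lambda_\omega F_{H_0}| = O(r^{-N_0})$ from Lemma \ref{good initial metric} -- yields a well-defined analytic degree $\deg_\omega(\olsi S)$ with $\deg_\omega(\olsi S) \le \mu_\omega(E) \cdot \mathrm{rank}(\olsi S)$. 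Integration by parts with cut-offs near $D$, combined with the structure $\omega = \omega_0 + dd^c\varphi$ prescribed by Assumption 1, should then produce an asymptotic identity of the shape
\begin{equation*}
\deg_\omega(\olsi S) = A_0 \bigl(c_1(D)^{n-1} \cdot c_1(\olsi S)\bigr) + A_1 \bigl(c_1(D)^{n-2} \cdot [\omega_0] \cdot c_1(\olsi S)\bigr) + (\text{lower-order terms in } [\omega_0]),
\end{equation*}
with universal positive constants $A_0, A_1$ depending only on the model metric. Comparing this for $\olsi S$ against the analogous identity for $\olsi E$ then translates the PHYM inequality precisely into the dichotomy of Definition \ref{definition of alpha beta stability}.

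\textbf{Sufficiency.} Assuming $(c_1(D),[\omega_0])$-stability of $\olsi E$, I would set $\lambda_0 := \mu_\omega(E)$ as computed by the expansion above and run Simpson's perturbed continuity method: for each $\epsilon > 0$, seek $s_\epsilon \in C^\infty(X,\ii\mathfrak{su}(E,H_0))$ solving
\begin{equation*}
L_\epsilon(s) := \ii\Lambda_\omega F_{H_0 e^s} - \lambda_0 \, \mathrm{Id} + \epsilon s = 0.
\end{equation*}
For $\epsilon \gg 1$ the implicit function theorem on weighted Banach spaces adapted to $\mathcal P_{H_0}$ produces a solution, and openness in $\epsilon$ follows from the invertibility of the linearization. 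The crux is a uniform $L^\infty$ bound on $s_\epsilon$ as $\epsilon \to 0^+$; granted it, elliptic bootstrapping plus the decay of $\Lambda_\omega F_{H_0}$ produces a smooth limit $s_* \in \mathcal P_{H_0}$ solving the PHYM equation, while uniqueness within $\mathcal P_{H_0}$ follows from the convexity of Donaldson's functional along geodesics (the requisite integrations by parts being legitimate thanks to the $L^\infty + L^2_1$ bounds defining $\mathcal P_{H_0}$).

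If the uniform $L^\infty$ bound fails, then following Simpson the normalized endomorphisms $u_\epsilon := s_\epsilon/\|s_\epsilon\|_{L^\infty}$ admit a nontrivial weak limit $u_\infty$ that is bounded, self-adjoint, trace-free, and satisfies $\pp u_\infty \in L^2$. Spectral projections of $u_\infty$ onto its top eigenspaces are weakly holomorphic subbundles of $E$; the Uhlenbeck--Yau regularity theorem upgrades them to saturated coherent reflexive subsheaves on $X$, and a Bando--Siu type extension across $D$ -- legitimised by the $c_1(N_D)$-polystability of $\olsi E|_D$ and the controlled behaviour of $H_0$ near $D$ from Lemma \ref{good initial metric} -- promotes them to reflexive coherent subsheaves $\olsi S \subset \olsi E$ of intermediate rank. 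A Chern--Weil and spectral computation paralleling the necessity part yields $\deg_\omega(\olsi S) \geq \mu_\omega(E) \cdot \mathrm{rank}(\olsi S)$; via the two-term intersection expansion this forces either $\mu_{c_1(D)}(\olsi S) \geq \mu_{c_1(D)}(\olsi E)$ strictly, or equality at the $c_1(D)$-level together with $\mu_{[\omega_0]}(\olsi S) \geq \mu_{[\omega_0]}(\olsi E)$, in either case contradicting $(c_1(D),[\omega_0])$-stability of $\olsi E$.

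\textbf{Main obstacle.} I expect the principal difficulty to lie in bridging the analytic and algebraic descriptions of degree. Establishing the two-term asymptotic expansion of $\deg_\omega$ from Assumption 1, in a form precise enough that equality in the leading $c_1(D)$-slope genuinely forces the secondary $[\omega_0]$-comparison, will require careful asymptotic analysis of the model metric near $D$ and non-trivial uniform bounds as the cut-offs are removed. Equally delicate is the extension across $D$ of the weakly holomorphic subsheaf produced by the failure of the continuity method: one must rule out singular behaviour along the divisor while preserving both reflexivity and enough integrability to compute its degree against the $\omega_0$-and-$D$ data. These are precisely the two points at which the novel pair-stability notion becomes necessary rather than incidental.
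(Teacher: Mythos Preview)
Your degree expansion is the wrong mechanism. The paper does \emph{not} obtain a two-term asymptotic $\deg_\omega(\olsi S)=A_0\bigl(c_1(D)^{n-1}\cdot c_1(\olsi S)\bigr)+A_1\bigl(c_1(D)^{n-2}\cdot[\omega_0]\cdot c_1(\olsi S)\bigr)+\cdots$; instead Lemma \ref{degree coincide} shows the much cleaner identity
\[
\int_X \frac{\ii}{2\pi}\tr(F_{H_0})\wedge\omega^{n-1}=\int_{\olsi X} c_1(\olsi E)\wedge[\omega_0]^{n-1},
\]
i.e.\ the analytic $\omega$-degree is purely the $[\omega_0]$-degree. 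All cross-terms $\omega_0^{n-1-k}\wedge(dd^c\varphi)^k$ with $k\ge 1$ vanish by integration by parts, and crucially the $k=n-1$ term vanishes because the initial metric $H_0$ was built from a Hermitian--Einstein metric on $\olsi E|_D$. For a general subsheaf $\olsi S$ you cannot even run this: the analytic degree is $-\infty$ unless $\pp\pi_{S}^{H_0}\in L^2$, and Lemma \ref{key observation} shows this holds \emph{precisely} when $\olsi S|_D$ is a splitting factor of the polystable bundle $\olsi E|_D$.

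So the $c_1(D)$-slope comparison enters by an orthogonal route: via the identity $\mu(\mathcal E,c_1(D))=\mu(\mathcal E|_D,c_1(N_D))$ (restriction to $D$), the polystability of $\olsi E|_D$ forces any $\olsi S$ with $\mu_{c_1(D)}(\olsi S)\ge\mu_{c_1(D)}(\olsi E)$ to have $\olsi S|_D$ a splitting factor, hence equality at the $c_1(D)$-level and $\pp\pi\in L^2$; only \emph{then} is the $[\omega_0]$-degree computable and compared. Conversely, in the sufficiency direction the destabilising subsheaf produced by Simpson's blow-up automatically has $\pp\pi\in L^2$, so Lemma \ref{key observation}(2) (which is an $L^2$ comparison between $\omega$ and a smooth metric on $\olsi X$, then Uhlenbeck--Yau on $\olsi X$---not a Bando--Siu extension) gives $\olsi S|_D$ a splitting factor, forcing $\mu_{c_1(D)}(\olsi S)=\mu_{c_1(D)}(\olsi E)$; the contradiction then comes only from the $[\omega_0]$-level. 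Your two-term expansion conflates these two separate mechanisms and would not yield the correct dichotomy. Secondary point: the paper uses Donaldson's Dirichlet problem on an exhaustion rather than the $\epsilon s$-perturbed continuity method, with the $C^0$ bound driven by the weighted mean-value inequality of Lemma \ref{weighted sobolev inequality}; your continuity scheme is a legitimate alternative in spirit, but the openness step on a manifold of this volume growth would need its own Fredholm/weighted-space analysis that you have not supplied.
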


 By the definition of $(\alpha,\beta)$-stability in Definition \ref{definition of alpha beta stability}, we have the following consequence.
\begin{corollary} Suppose $\omega$ and $\olsi{E}$ satisfy the conditions in Theorem \ref{main theorem}. Then we have 
\begin{itemize}
	\item [(1).] suppose $[\omega_0]=0$, then there exists a unique $\omega$-PHYM metric in $\mathcal P_{H_0}$ if and only if $\olsi{E}$ is $c_1(D)$-stable.
	\item [(2).] if $\olsi{E}|_{D}$ is $c_1(N_D)$-stable, then there exists a unique $\omega$-PHYM in $\mathcal P_{H_0}$.
\end{itemize}
	\end{corollary}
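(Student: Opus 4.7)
Both assertions follow from Theorem~\ref{main theorem} once one verifies $(c_1(D),[\omega_0])$-stability of $\olsi E$ under the given hypotheses, so my plan in each case is to translate the hypothesis into the terms of Definition~\ref{definition of alpha beta stability} and then invoke the main theorem.

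For (1), when $[\omega_0]=0$ in $H^{1,1}(\olsi X)$ the class $[\omega_0]^{n-1}$ vanishes, so $\mu_{[\omega_0]}(T)=0$ for every torsion-free coherent sheaf $T$. Condition (b) of Definition~\ref{definition of alpha beta stability} then reduces to requiring $0<0$ and can never hold, so $(c_1(D),[\omega_0])$-stability collapses to the strict slope inequality (a), i.e.\ to $c_1(D)$-stability, and Theorem~\ref{main theorem} immediately gives the equivalence.

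For (2), I would establish the stronger statement that $c_1(N_D)$-stability of $\olsi E|_D$ implies $c_1(D)$-stability of $\olsi E$, which is a fortiori $(c_1(D),[\omega_0])$-stability, so Theorem~\ref{main theorem} applies. Take a proper reflexive subsheaf $S\subset \olsi E$; replacing $S$ by its saturation if necessary we may assume $\olsi E/S$ is torsion-free, and then $S|_D$ embeds into $\olsi E|_D$ as a subsheaf of rank $\rank S<\rank \olsi E$. Using $c_1(D)=[D]$ and the adjunction $c_1(D)|_D=c_1(N_D)$, one computes
\begin{equation*}
\int_{\olsi X} c_1(\det S)\wedge c_1(D)^{n-1} \;=\; \int_{D} c_1((\det S)|_D)\wedge c_1(N_D)^{n-2};
\end{equation*}
since $S$ is locally free outside codimension three in $\olsi X$, the sheaves $(\det S)|_D$ and $\det(S|_D)$ agree outside a codimension-two subset of $D$, which does not affect the integral, so $\mu_{c_1(D)}(S)=\mu_{c_1(N_D)}(S|_D)$. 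Passing to the saturation $(S|_D)^{\mathrm{sat}}\subset\olsi E|_D$, which is a proper reflexive subsheaf with slope at least that of $S|_D$, the stability hypothesis on $D$ gives
\begin{equation*}
\mu_{c_1(D)}(S)=\mu_{c_1(N_D)}(S|_D)\le \mu_{c_1(N_D)}((S|_D)^{\mathrm{sat}})<\mu_{c_1(N_D)}(\olsi E|_D)=\mu_{c_1(D)}(\olsi E),
\end{equation*}
so condition (a) of Definition~\ref{definition of alpha beta stability} holds.

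The only technical point to watch is the behaviour of the restriction $S|_D$ when $S$ is reflexive but not locally free; this is settled by the standard fact that slopes are unchanged by modifications of the sheaf on codimension-two subsets. Uniqueness of the PHYM metric in $\mathcal P_{H_0}$ is inherited directly from the uniqueness clause in Theorem~\ref{main theorem}.
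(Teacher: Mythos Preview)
Your argument is correct and matches the paper's intended approach, which is simply to unpack Definition~\ref{definition of alpha beta stability} and invoke Theorem~\ref{main theorem}; the paper states only that the corollary follows ``by the definition of $(\alpha,\beta)$-stability'' and records the two relevant observations immediately after Definition~\ref{definition of pair stability}. For part~(2) you have correctly supplied the one nontrivial ingredient the paper leaves implicit here but uses later (citing \cite{mistretta} in the proof of Proposition~\ref{only if part}): the identity $\mu_{c_1(D)}(S)=\mu_{c_1(N_D)}(S|_D)$ for reflexive $S$, which is exactly what forces condition~(a) to hold strictly when $\olsi E|_D$ is stable.
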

	
Now let us give a brief outline for the proof of Theorem \ref{main theorem}. For the ``if'' direction, we follow the argument in  \cite{simpson,mochizuki} by solving Dirichlet problems on a sequence of domains exhausting $X$. A key issue here is to prove a uniform $C^0$-estimate. For this we rely on a weighted Sobolev inequality in \cite[Proposition 2.1]{tian-yau1} and Lemma \ref{key observation} which builds a relation between weakly holomorphic projection maps over $X$ and coherent  subsheaves over $\olsi X$. For the ``only if'' direction, we use integration by parts to show that the curvature form on $E$ can be used to compute the degree of $\olsi E$ with respect to $[\omega_0]$ (see Lemma \ref{degree coincide}). For both directions, the asymptotic behaviour of the K\"ahler metric $\omega$ plays an essential role.

Then let us compare Theorem \ref{main theorem} with some results existing in the literature. In \cite{simpson} and \cite{mochizuki}, by assuming some conditions on the base K\"ahler manifold $(X,\omega)$ and an initial hermitian metric on $E$, it was proved  that for an irreducible vector bundle $E$ the existence of a PHYM metric is equivalent to a stability condition called analytic stability. In our case, since we assume that $E$ has a compactification $\olsi{E}$ on $\olsi{X}$, the existence of good initial metrics is guaranteed by the polystablity assumption of $\olsi{E}|_D$. Moreover the stability we used in Theorem \ref{main theorem} is for $\olsi{E}$ which is purely algebraic, i.e. independent of choice of metrics. In \cite{bando}, for asymptotically conical K\"ahler metrics on $X$, it was proved that if $\olsi{E}|_D$ is $c_1(N_D)$-polystable, then there exists PHYM metrics on $E$.  No extra stability condition is needed in this case. Therefore the necessity of stability conditions depends on the geometry of $(X,\omega)$ at infinity. 
 Another typical example for such a phenomenon is the problem for the existence of bounded solutions of the Poisson equation on noncompact manifolds. See Section \ref{relation with some results} for a brief discussion.

The paper is organized as follows. In Section \ref{assumption on the asymptotic behaviour}, we discuss the assumptions on the K\"ahler manifold $(X,\omega)$ and prove a weighted mean value inequality for nonnegative almost subharmonic functions. In Section \ref{basics on vector bundle}, we give a brief review of some standard results for hermitian holomorphic vector bundles and give a detailed discussion on $(\alpha,\beta)$-stability used in Theorem \ref{main theorem}. In Section \ref{existence of good initial metric}, Lemma \ref{good initial metric} is proved and we also show that the assumption in Lemma \ref{good initial metric} is necessary.
In Section \ref{proof of the main theorem}, we prove Theorem \ref{main theorem} and give an example which does not satisfy the stability assumption. In Section \ref{relation with some results}, we discuss some other results on the existence of PHYM metrics. In Section \ref{examples}, we discuss some Calabi-Yau metrics satisfying \textit{Assumption 1}. In Section \ref{general normal bundles}, we prove a counterpart of Theorem \ref{main theorem} in a different setting where $\olsi{X}$ is a compact K\"ahler surface and $c_1(N_D)$ is trivial. In Section \ref{open problems}, we discuss some problems for further study.

\subsection*{Notations and conventions}
\begin{itemize}
	\item $d^c=\frac{\ii}{2}(-\partial+\pp)$, so $dd^c=\ii \partial\pp$.
	\item $\Delta=\ii \Lambda\pp\partial$, so in local normal coordinates $\Delta f=-\sum_{i=1}^{n}\frac{\partial^2f}{\partial z_i\partial \bar{z_i}}$.
	\item $B_r(p)$ denotes the geodesic ball centered at $p$ with radius $r$ and if the basepoint $p$ is clear from the context, we will just write it as $B_r$.
\item In this paper, we identify a holomorphic vector bundle with the sheaf formed by its holomorphic sections.
\item When we integrate on a Riemannian manifold $(M,g)$, typically we will omit the volume element $d V_g$.
	\item Let $(M,\omega)$ be a K\"ahler manifold and $(E,H)$ be a hermitian holomorphic vector bundle over $M$. We use $C^{\infty}(M,E)$ to denote smooth sections of $E$; 
\begin{equation*}
	\text{$W^{k,p}(M,E;\omega,H)$ (respectively $W_{loc}^{k,p}(M,E;\omega,H)$)}
\end{equation*} to denote sections of $E$ which are $W^{k,p}$ (respectively $W_{loc}^{k,p}$) with respect to the metric $\omega$ and $H$. If bundles or metrics are clear from the text, we will omit them.
\end{itemize}
\subsection*{Acknowledgements} The author wants to express his deep and sincere gratitude to his advisor Song Sun, for the patient guidance, inspiring discussions and valuable suggestions. He thanks Charles Cifarelli for careful reading of the draft and numerous suggestions. He thanks Sean Paul for his interest in this work. The author is partially supported by NSF grant DMS-1810700 and NSF grant DMS-2004261.

\section{On the asymptotic behaviour of $\omega$}\label{assumption on the asymptotic behaviour} As mentioned in the introduction, the asymptotic behaviour of the K\"ahler metric on the base manifold is crucial to make the argument in this paper work. In this section we will discuss these assumptions. 
Let $\olsi{X}$ be an $n$-dimensional ($n\geq 2$) compact K\"ahler manifold, $D$ be a smooth divisor and $X=\olsi{X}\backslash D$ denote the complement of $D$ in $\olsi{X}$.
Let $L_D$ denote the holomorphic line bundle determined by $D$.

 From now on, we assume the normal bundle of $D$, i.e. $N_D=L_D|_D$ is ample unless otherwise mentioned. Then we know that $c_1(D)$ is nef and big. We fix a hermitian metric $h_D$ on $N_D$ such that $$\omega_D:=\ii \Theta_{h_D}$$ is a K\"ahler form on $D$, where $\Theta_{h_D}$ denote the curvature of $h_D$.  Let $\mathcal D$ (respectively $\mathcal C$) denote the (respectively punctured) disc bundle of $N_D$ under the metric $h_D$, i.e. 
\begin{equation}\label{definition of model space}
\begin{aligned}
	\mathcal C&:=\left\{\xi\in N_D:0<\left|\xi\right|_{h_D}\leq1/2\right\},\\
	\mathcal D&:=\left\{\xi\in N_D:\left|\xi\right|_{h_D}\leq1\right\}.
\end{aligned}
	\end{equation}
We are mainly interested in the region where $|\xi|_{h_D}$ is small, which will be viewed as a model of $X$ at infinity. Then we have a well-defined positive smooth function on $\mathcal C$ 
\begin{equation}\label {definition of t}
	t=-\log|\xi|^2_{h_D}
\end{equation}
 such that $\ii\partial\pp t=\omega_D$, where using the obvious projection map $p:\mathcal C\rightarrow D$, we view $\omega_D$ as a form on $\mathcal C$.
Then for every smooth function $F:(0,\infty)\longrightarrow \mathbb R$ with $F'>0$ and $F''>0$,
\begin{equation}\label{explicit expression of kahler form}
\omega_{ F}=\ii\partial \pp F(t)=F'(t)\ii\partial\pp t+F''(t)\ii\partial t\wedge \pp t
\end{equation}
defines a K\"ahler form on $\mathcal C$. Let $g_{F}$ denote the corresponding Riemannian metric and $r_{F}$ denote the induced distance function to a fixed point $p$. 

We need a diffeomorphism to identify a neighborhood of $D$ in $N_D$ with a neighborhood of $D$ in $\olsi{X}$. For this, we use the following definition introduced by Conlon and Hein in \cite[Definition 4.5]{conlon-hein2}.

\begin{definition}\label{definition of expoenntial map}
	An exponential-type map is a diffeomorphism $\Phi$ from a neighborhood of $D$ in $N_D$ to a neighborhood of $D$ in $\olsi{X}$ such that 
	\begin{itemize}
		\item [(1)]	$\Phi(p)=p$ for all $p\in D$,
		\item [(2)] $d\Phi_p$ is complex linear for all $p\in D$,
		\item [(3)] $\pi(d\Phi_p(v))=v$ for all $p\in D$ and $v\in N_{D,p}\subset T^{1,0}_pN_D$, where $\pi$ denotes the projection $T^{1,0}_p\olsi{X}\rightarrow T^{1,0}_p\olsi{X}/T^{1,0}_pD=N_{D,p}$.
	\end{itemize}
\end{definition}

Now we can state the assumptions for the K\"ahler metric $\omega$ on $X$. We consider a special class of potentials: 
\begin{equation}\label{conditions on potentials}
	\mathcal H:=\left\{F(t):F(t)=A t^a \text{ for some constant $A>0$ and $a\in \Big(1,\frac{n}{n-1}\Big]$}\right\}
\end{equation}

\begin{assumption}
	\label{assumption on kahler metrics}
Let $\omega$ be a K\"ahler form on $X$ and $g$ be the corresponding Riemannian metric. We assume that  
\begin{itemize}
	\item [(1)] the sectional curvature of $g$ is bounded,
	\item [(2)] $\omega$ can be written as $\omega_0+\ii\partial\pp \varphi$, where $\varphi$ a smooth function on $X$ and $\omega_0$ is a smooth $(1,1)$-form on $\olsi{X}$ with $\omega_0|_D=0$,
	\item [(3)] there exists an exponential-type map $\Phi$ from a neighborhood of $D$ in $N_D$ to a neighborhood of $D$ in $\olsi{X}$ and a potential $F\in \mathcal H$
such that
\begin{equation}\label{assumption on asymptotics}
	\left|\Phi^*(\omega)-\omega_{F}\right|_{g_{F}}=O(r_F^{-N_0}) \text{ for some number  $N_0\geq 8$.}
\end{equation}
\end{itemize}
\end{assumption}

\begin{remark}
	There are (lots of) other potentials $F$ besides those given in (\ref{conditions on potentials}) making the argument in this paper work, but for simplicity of the statement and some computations we only consider potentials in $\mathcal H$. The order in (\ref{assumption on asymptotics}) is not optimal either and again we just choose the number 8 for a neat statement. From now on, unless otherwise mentioned, $N_0$ denote the number in (\ref{assumption on asymptotics}).
	 \end{remark}

 Here are the main properties we will use for K\"ahler metrics defined by the potentials in $\mathcal H$. For simplicity of notation, we omit the subscript for the dependence on $F$.

\begin{proposition}\label{basic geometric property} For the K\"ahler metric defined by a potential $F=At^a\in \mathcal H$, we have 
\begin{itemize}
	\item [(1)] The metric is complete as $|\xi|_{h_D}\rightarrow 0$, 
	\item [(2)] $r\sim t^{\frac{a}{2}}$,
	\item [(3)] $\operatorname {Vol}(B_r(p))\sim r^{2n(1-\frac{1}{a})}$,
	\item [(4)] if $\theta$ is a smooth form on $\mathcal D$ with $\theta|_D=0$, then $\left|\theta\right|_g=O(e^{-\delta t})$ for some $\delta>0$.\end{itemize}

\end{proposition}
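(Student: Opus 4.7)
The plan is to work on the punctured disc bundle $\mathcal{C}$ with the explicit model metric $\omega_F = F'(t)\omega_D + F''(t)\,i\partial t\wedge \bar\partial t$, exploiting its horizontal-vertical split. Locally one can trivialize $N_D \to D$ by a holomorphic fiber coordinate $w$ (chosen so $h_D$ is unitary at a reference base point) and introduce polar coordinates $w = e^{-t/2}e^{i\theta}$. In these coordinates the Riemannian metric splits, to leading order, as $F'(t)\,p^{*}g_D$ on the horizontal distribution plus $\tfrac{F''(t)}{4}\bigl(dt^{2} + 4\,d\theta^{2}\bigr)$ on the vertical. From this one reads off the three basic identities
\begin{equation*}
|dt|_g^{2} = 4/F''(t),\qquad |dw|_g^{2}\sim e^{-t}/F''(t),\qquad |dz_i|_g^{2}\sim 1/F'(t)
\end{equation*}
for a base differential $dz_i$. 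With $F(t) = At^{a}$ these give $\sqrt{F''(t)}\sim t^{(a-2)/2}$, and everything else follows from polynomial algebra.

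For (1) and (2), I would establish matching bounds on $r$. The upper bound $r \leq C\,t^{a/2}$ comes from an explicit radial curve of length $\tfrac{1}{2}\int_{t_0}^{t}\sqrt{F''(s)}\,ds\sim t^{a/2}$. For the lower bound, set $u(t) := \tfrac{1}{2}\int_{t_0}^{t}\sqrt{F''(s)}\,ds$; from $du = \tfrac{1}{2}\sqrt{F''(t)}\,dt$ and $|dt|_g = 2/\sqrt{F''(t)}$ one gets $|du|_g\equiv 1$, so $u$ is $1$-Lipschitz and $r \geq u(t)+O(1)$. Completeness (1) is then immediate since $u(t)\to\infty$ as $t\to\infty$.

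For (3), since $\omega_D$ is pulled back from the $(n-1)$-dimensional base, $\omega_D^{n}=0$, and so
\begin{equation*}
\omega_F^{n} = n\,F'(t)^{n-1}F''(t)\,\omega_D^{n-1}\wedge i\partial t\wedge\bar\partial t.
\end{equation*}
Using $F'(t)^{n-1}F''(t)\,dt = \tfrac{1}{n}\,d\bigl(F'(t)^{n}\bigr)$ and Fubini on the $S^{1}$-bundle $\{t = \text{const}\}\to D$, the volume of $\{t\leq T\}$ equals, up to multiplicative constants and bounded boundary contributions, $F'(T)^{n}\int_D \omega_D^{n-1}\sim T^{n(a-1)}$. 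Substituting $T\sim r^{2/a}$ from (2) then yields $\mathrm{Vol}(B_r)\sim r^{2n(1-1/a)}$.

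For (4), I would argue locally. In coordinates $(w,z_1,\ldots,z_{n-1})$ with $D=\{w=0\}$, every smooth form on $\mathcal{D}$ is a finite sum of monomials in $dw,\,d\bar w,\,dz_i,\,d\bar z_j$ with smooth coefficients; the condition $\theta|_D=0$ forces each monomial with no $dw$ or $d\bar w$ factor to have coefficient vanishing on $\{w=0\}$, hence of size $O(|w|)=O(e^{-t/2})$. Since $|dz_i|_g$ decays polynomially and $|dw|_g, |d\bar w|_g \sim e^{-t/2}\,t^{(2-a)/2}$, every monomial contributes at least one factor $e^{-t/2}$ in its $g$-norm, and the residual polynomial-in-$t$ factors are absorbed into $e^{-\delta t}$ for any $\delta<\tfrac12$. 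The main bookkeeping obstacle is tracking the cross-terms that appear when $h_D$ is not unitary (so $\partial t = -dw/w + \partial\log h_D$ has horizontal components), and verifying that all such corrections are of lower order and do not affect the asymptotics of $r$, $\mathrm{Vol}(B_r)$, or the decay estimate; a secondary but routine care-point is that $w=e^{-t/2}e^{i\theta}$ reverses the orientation of the radial parameter, so $-dt\wedge d\theta$ is the positive form with respect to the complex orientation $d\rho\wedge d\theta$.
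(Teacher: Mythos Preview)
Your proposal is correct and follows essentially the same approach as the paper: both set up local holomorphic coordinates $(z_1,\dots,z_{n-1},w)$ trivializing $N_D$, write out the K\"ahler form explicitly to obtain the key estimates $|dz_i|_g^2\sim t^{1-a}$ and $|dw|_g^2\sim |w|^2/F''(t)\leq Ce^{-\delta t}$, and deduce (4) from these. The paper in fact only says (1)--(3) ``follow directly'' from the explicit formula for $\omega_F$, so your arguments for completeness, $r\sim t^{a/2}$ via the $1$-Lipschitz function $u$, and the volume growth via $\omega_F^n = nF'(t)^{n-1}F''(t)\,\omega_D^{n-1}\wedge i\partial t\wedge\bar\partial t$ are more detailed than what the paper provides, but entirely in the same spirit.
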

Conditions (1)--(3) follow directly from (\ref{explicit expression of kahler form})  and (\ref{conditions on potentials}). Condition (4) can be proved directly by doing computation in local coordinates on $\mathcal D$ as in \cite[Section 3]{hsvz1}. For completeness and later reference, we include some details.

\textit{Proof of (4)}:
 We choose local holomorphic coordinates $\underline{z}=\{z_i\}_{i=1}^{n-1}$ on the smooth divisor $D$ and fix a local holomorphic trivialization $e_0$ of $N_D$ with $|e_0|_{h_D}=e^{-\psi}$, where $\psi$ is a smooth function on $D$ satisfying $\ii \partial\pp \psi=\omega_D$. Then we get local holomorphic coordinates $\{z_1,\cdots,z_{n-1},w\}$ on $\mathcal C$ by writing a point $\xi=we_0(\underline z)$. Then in these coordinates we can write (\ref{explicit expression of kahler form}) as 
\begin{equation}\label{model metric expansion}
\omega_{F}= \ii
F'(t) \psi_{i\bar j} dz_i\wedge d\bar z_{ j}+F''(t) \sqrt{-1}\left(\frac{d w}{w}- \psi_idz_i\right) \wedge\left(\frac{d \bar{w}}{\bar{w}}-\psi_{\bar j}d\bar z_{ j}\right).
\end{equation}
Then it is easy to check the following estimates:
	\begin{equation}\label{estimate for 1-forms}
	\begin{aligned}
			&|dz_i|^2_{\omega_{F}}=|\Lambda_{\omega_F}(dz_i\wedge d\bar z_i)| \sim  t^{1-a}\\
			&|dw|^2_{\omega_{F}}\sim \frac{|w|^2}{F''(t)}\leq Ce^{-\delta t} \text{ for some $\delta>0$}\\
			&\omega_{F}^n \sim \frac{F'(t)^{n-1}F''(t)}{|\omega|^2}\ii^n\left(\prod_{i=1}^{n-1}dz_i\wedge d\bar z_i\right)\wedge dw\wedge d\bar w .
	\end{aligned}
	\end{equation}	
 Then (4) follows directly from (\ref{estimate for 1-forms}).
\qed

\begin{remark}
	Actually, from the proof of Proposition \ref{basic geometric property}-(4), we can give an effective lower bound for $\delta$. For example, for 2-forms, $\delta$ can be chosen to be any positive number sufficiently close to (and less than) $1/2$. However $\delta>0$ is sufficient for our later use.
\end{remark}

\begin{remark}
	Although not needed in this paper, we mention that following the computation in \cite[Section 4]{tian-yau1} or \cite[Section 3]{biquard-guenancia}, we can show that $\Vert Rm\Vert\leq C r^{2(\frac{1}{a}-1)}$.

\end{remark}

In \textit{Assumption 1}, we only assume the asymptotics of the K\"ahler forms. To get the asymptotic behaviour of the corresponding Riemannian  metrics, we need to show that the complex structure of $\olsi{X}$ and $\mathcal D$ are sufficiently close under the metric $g_F$. When $D$ is an anticanonical divisor, the following result is proved in \cite[Proposition 3.4]{hsvz1}. For a general smooth divisor $D$, the author learned the following proof from Song Sun.

\begin{lemma}
	Let $J_{\mathcal D}$ and $J_{\ols{X}}$ denote the complex structure on $\mathcal D$ and $\olsi{X}$ respectively. And $\Phi^*J_{\olsi{X}}:=d\Phi\circ J_{\olsi{X}}\circ (d\Phi)^{-1}$ denote the pullback of $J_{\olsi{X}}$ under an exponential-type map $\Phi$. Then we have \begin{equation}\label{complex structure is close}
 		\left|\nabla^k_{g_F}(\Phi^*J_{\olsi{X}}-J_{\mathcal D})\right|_{g_F}=O(e^{(-\frac{1}{2}+\epsilon) t}) \text{ for all $k\geq 0$ and $\epsilon>0$}.
 	\end{equation}
\end{lemma}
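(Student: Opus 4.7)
The plan is to reduce the statement to a local computation near $D$, use the three conditions of Definition~\ref{definition of expoenntial map} to obtain a refined Taylor expansion for $T := \Phi^*J_{\olsi{X}} - J_{\mathcal D}$ along $D$, and combine this with the anisotropic scaling of $g_F$ recorded in (\ref{estimate for 1-forms}).

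First, I would set up local coordinates as in the proof of Proposition~\ref{basic geometric property}-(4): on each of finitely many charts covering a neighborhood of $D$, pick holomorphic coordinates $(z_1,\ldots,z_{n-1})$ on $D$ and a holomorphic frame $e_0$ of $N_D$ so that $\xi=we_0(z)$ gives coordinates $(z,w)$ on $N_D$ with $D=\{w=0\}$. Pick also local holomorphic coordinates $(Z_1,\ldots,Z_{n-1},W)$ on $\olsi{X}$ with $D=\{W=0\}$, and write $\Phi(z,w)=(Z(z,w),W(z,w))$. Translating the three properties in Definition~\ref{definition of expoenntial map} into these coordinates yields $Z_k(z,0)=z_k$, $W(z,0)=0$, $\partial_w W|_{w=0}=1$, $\partial_{\bar w} Z_k|_{w=0}=0$, and $\partial_{\bar w} W|_{w=0}=0$, hence the Taylor expansions $Z_k(z,w) = z_k + a_k(z) w + O_2$ and $W(z,w) = w + O_2$, where $O_2$ denotes smooth terms of total order $\geq 2$ in $(w,\bar w)$.

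Next, I would extract the leading-order structure of $T$. The identity $(J_{\mathcal D}+T)^2 = -I$ forces $J_{\mathcal D} T + T J_{\mathcal D} = -T^2$, so at leading order $T$ anticommutes with $J_{\mathcal D}$ and hence maps $T^{1,0}_{\mathcal D}$ into $T^{0,1}_{\mathcal D}$ and vice versa. A direct computation from the Taylor expansion above then shows that the $T^{0,1}_{\mathcal D}$-components of $T\partial_{z_j}$ and $T\partial_w$ are $O(|w|)$, while the $T^{1,0}_{\mathcal D}$-components are $O(|w|^2)$. The higher-order vanishing of these ``mixed'' components, in particular of $T^w_{z_j} = O(|w|^2)$, is the decisive consequence of the complex linearity of $d\Phi|_D$. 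Combining with (\ref{estimate for 1-forms}), which gives $|\partial_{z_j}|_{g_F} \sim t^{(a-1)/2}$ and $|\partial_w|_{g_F} \sim t^{(a-2)/2} e^{t/2}$, and using $|w| \sim e^{-t/2}$, one checks in an almost-orthonormal frame $v_j = \partial_{z_j}/|\partial_{z_j}|_{g_F}$, $v_n = \partial_w/|\partial_w|_{g_F}$ that each $|T v_\alpha|_{g_F}$ is bounded by $|w|$ times a polynomial in $t$, giving $|T|_{g_F} = O(e^{(-1/2+\epsilon) t})$ for any $\epsilon>0$. For higher covariant derivatives one iterates: the Christoffel symbols of $g_F$ in these coordinates grow at most polynomially in $t$, so each $\nabla_{g_F}$ introduces only polynomial factors, absorbed by the $\epsilon$.

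The hard part is the anisotropy of $g_F$: since unit vectors in the normal direction have $g_F$-length $\sim e^{t/2}$, a tensor with coordinate components of size $|w| \sim e^{-t/2}$ would naively have operator norm of order one, giving no decay at all. The estimate is saved only by the higher-order vanishing of the $T^w_{z_j}$-type components, which in turn comes from the complex linearity of $d\Phi$ at $D$. Careful bookkeeping of which components of $T$ vanish to which order along $D$ is therefore essential.
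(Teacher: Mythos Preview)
Your approach of Taylor–expanding $\Phi$ in holomorphic coordinates on both sides is a legitimate alternative to the paper's argument (which works with $J_{\olsi X}$ directly and uses integrability via the Nijenhuis bracket together with property~(3) via $\pp_{J_{\olsi X}}\partial_w$). But there is a real gap in your bookkeeping, and it is exactly the place where you say the argument is ``saved.''

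You claim that the $(0,1)$-components of $T\partial_{z_j}$ are $O(|w|)$ while the $(1,0)$-components are $O(|w|^2)$, and you single out $T^{w}_{z_j}=O(|w|^2)$ as decisive. That bound on $T^w_{z_j}$ is automatic from $T|_D=0$ together with the algebraic relation $J_{\mathcal D}T+TJ_{\mathcal D}=-T^2$: for $v\in T^{1,0}_{\mathcal D}$ one gets $(Tv)^{1,0}=\tfrac{i}{2}T^2v=O(|w|^2)$. The component that actually controls the anisotropic blow-up is the \emph{other} normal component, namely $T^{\bar w}_{z_j}$, which is a $(0,1)$-component. With only $T^{\bar w}_{z_j}=O(|w|)$, your frame computation gives
\[
|Tv_j|_{g_F}\ \gtrsim\ \frac{|T^{\bar w}_{z_j}|\cdot|\partial_{\bar w}|_{g_F}}{|\partial_{z_j}|_{g_F}}
\ \sim\ \frac{|w|\cdot t^{(a-2)/2}e^{t/2}}{t^{(a-1)/2}}
\ \sim\ t^{-1/2},
\]
which is only polynomial decay, not $O(e^{(-1/2+\epsilon)t})$. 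So the claim ``each $|Tv_\alpha|_{g_F}$ is bounded by $|w|$ times a polynomial in $t$'' does not follow from the bounds you have stated.

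The good news is that your Taylor expansion does contain the missing estimate; you just have not extracted it. From $W=w+O_2$ one gets $\partial_{z_j}\bar W=\overline{\partial_{\bar z_j}W}=O_2$, and from $w=W+O_2$ one gets that $(d\Phi)^{-1}\partial_{\bar Z_k}$ has $\partial_{\bar w}$-component $O(|w|^2)$. Feeding these into
\[
T\partial_{z_j}=(d\Phi)^{-1}\bigl[-2i\bigl((\partial_{z_j}\bar a_k)\bar w+O_2\bigr)\partial_{\bar Z_k}-2i\,O_2\,\partial_{\bar W}\bigr]
\]
yields $T^{\bar w}_{z_j}=O(|w|^2)$, which is precisely the paper's conclusion $P_\alpha=O(|w|^2)$ in (\ref{difference of complex structure}). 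Note that this step genuinely uses property~(3): without it one only has $W=b(z,\bar z)w+O_2$, hence $\partial_{z_j}\bar W=(\partial_{z_j}\bar b)\bar w+O_2=O(|w|)$, and the second-order vanishing fails. The paper isolates these two contributions separately (integrability kills the $w$-coefficient of $P_\alpha$, property~(3) kills the $\bar w$-coefficient), and then packages the conclusion cleanly by passing to the frame $e_n=w\partial_w$, $e_\alpha=\partial_{z_\alpha}$, in which all basis vectors have polynomial $g_F$-length and every entry of $T$ is $O(|w|)$.
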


\begin{proof}
 	Since $d\Phi_p$ is complex linear for all $p\in D$, we know $\Phi^*J_{\olsi{X}}-J_{\mathcal D}$ is smooth section of $\operatorname{End}(T\mathcal D)$ vanishing on $D$. But this is not enough to get the bound  claimed in (\ref{complex structure is close}). We will use the integrability of $\Phi^*J_{\olsi{X}}$ and property (3) in Definition \ref{definition of expoenntial map}. In the following, we ignore the pull-back notation.
 	
 	Around a fixed point in $D$ we can choose local holomorphic coordinates $\left\{w,z_1 \cdots, z_{n-1}\right\}$ of the total space of $N_{D}$ so that the zero section is given by $w=0$. Then we can write for $\alpha=1, \cdots, n-1$ that
\begin{equation*}
	J_{\olsi{X}} \partial_{z_{\alpha}}=\ii \partial_{z_{\alpha}}+P_{\alpha} \partial_{\bar{w}}+Q_{\alpha \beta} \partial_{\bar{z}_{\beta}}+O\left(\left|w\right|^{2}\right),
\end{equation*}
where $P_{\alpha}$ and $Q_{\alpha \beta}$ are linear functions of $w$ and $\olsi{w}$, i.e. there are smooth functions $p_{\alpha}$ and $p_{\olsi{\alpha}}$ of $\{z_i\}$ such that $P_{\alpha}=p_{\alpha}w+p_{\ols{\alpha}}\ols{w}$ and a similar expression for $Q_{\alpha\beta}$. There are no type $(1,0)$ vectors in the linear term of the right hand side because $J_{\olsi{X}}^2=-\operatorname{id}$. Since $J_{\olsi{X}}$ is integrable, we know that
\begin{equation*}
	\left[\partial_{w}-\ii J_{\olsi{X}} \partial_{w}, \partial_{z_{\alpha}}-\ii J_{\olsi{X}} \partial_{z_{\alpha}}\right]=-2 \ii \partial_{w} P_{\alpha} \partial_{\bar{w}}-2 \ii \partial_{w} Q_{\alpha \beta} \partial_{\bar{z}_{\beta}}+O\left(\left|w\right|\right)
\end{equation*}
is still of type $(1,0)$ with respect to $J_{\olsi{X}}$, which coincides with $J_{\mathcal D}$ when restricted to $D$. Therefore $$\partial_{w} P_{\alpha}=p_{\alpha}=0.$$

By the property (2) and (3) in Definition \ref{definition of expoenntial map} and the following standard exact sequence of the holomorphic vector bundles on $D$ 

\begin{equation*}
	0\longrightarrow T^{1,0}D\longrightarrow T^{1,0}\olsi{X}\longrightarrow N_D\longrightarrow 0,
\end{equation*}
we know that on $D$, the $d\olsi{z}_{\alpha}$ component of $\pp_{J_{\olsi{X}}}\partial_{w}$ is tangential to $D$. Note that by definition we have $\pp_{J_{\olsi{X}}}\partial_{w}=\mathcal L_{\partial_{w}}J_{\olsi{X}}$, therefore we know that
\begin{equation*}
	\pp_{J_{\olsi{X}}}\partial_{w}(\partial_{\olsi{z}_{\alpha}})=[\partial_{w},J_{\olsi{X}}\partial_{\olsi{z}_{\alpha}}]=\bar{p}_{\olsi{\alpha}} \partial_{w}+\partial_{w}\olsi{Q}_{\alpha \beta}\partial_{z_{\beta}} +O\left(\left|w\right|\right).
\end{equation*}
Since on $D$, the $d\olsi{z}_{\alpha}$ component of $\pp_{J_{\olsi{X}}}\partial_{w}$ is tangential to $D$, we obtain $p_{\olsi{\alpha}}=0$. So we have for $\alpha=1,\cdots, n-1$
\begin{equation}\label{difference of complex structure}
		J_{\olsi{X}} \partial_{z_{\alpha}}=\ii \partial_{z_{\alpha}}+Q_{\alpha \beta} \partial_{\bar{z}_{\beta}}+O\left(\left|w\right|^{2}\right),
\end{equation}
Now on $\mathcal{D}$ we consider the local basis of holomorphic vector fields (with respect to $J_{\mathcal{C}}$):
$$
e_{n}=w \partial_{w}, e_{\alpha}=\partial_{z_{\alpha}}, \alpha=1, \cdots, n-1
$$
and correspondingly $\bar{e}_{n}, \bar{e}_{\alpha}$ the conjugate vector fields, and $e^{n}, e^{\alpha}$ the dual frame etc. Then we can write
\begin{equation}\label{difference after 2.6}
	J_{\olsi{X}}-J_{\mathcal{D}}=\sum J_{i}^{j} e^{i} \otimes e_{j},
\end{equation}
where $i, j$ ranges from $1, \cdots, n, \overline{1}, \cdots, \bar{n}$. Then (\ref{difference of complex structure}) implies that we have $\left|J_{i}^{j}\right|=O\left(\left|w\right|\right)$ for all $i, j .$  Then the lemma follows from the explicit expression of the K\"ahler metric on $\mathcal D$, see (\ref{estimate for 1-forms}). 
\end{proof}

From the assumption (\ref{assumption on asymptotics}) on the K\"ahler form and (\ref{complex structure is close}) on the complex structure asymptotics, we obtain that for the corresponding Riemannian metric
\begin{equation}\label{closeness of metric tensor}
	\left|\Phi^*g-g_F\right|_{g_F}=O(r_F^{-N_0}).
\end{equation}

It is also useful to write down the Riemannian metric $g_F$ explicitly in real coordinates. Note that the set $\left\{\xi\in N_D:\left|\xi\right|_{h_D}<1\right\}$ is diffeomorphic to $\mathbb{R_{+}}\times Y$, where $Y$ is a smooth $(2n-1)$-dimensional $S^1$ bundle over $D$. Let $F(t)=At^a\in \mathcal H$. Then we can write the Riemannian metric $g_{F}$ as follows
\begin{equation}\label{metric tensors on real cosrdinates}
	g_{F}=dr^2+C_1r^{2(1-\frac{1}{a})}g_D+C_2r^{2(1-\frac{2}{a})}\theta^2,
\end{equation}
where $g_D$ is the corresponding Riemannian metric for $\omega_D$ and $\theta$ is a connection 1-form on $Y$ such that $d\theta=\omega_D$.

From the asymptotic of the Riemannian metric tensor (\ref{closeness of metric tensor}), the explicit expression of the Riemannian metric $g_F$ in (\ref{metric tensors on real cosrdinates}) and conditions in \textit{Assumption 1}, one can directly show the following result.
\begin{lemma}\label{vanishing at infinity decay} Suppose $(X,\omega,g)$ satisfy \textit{Assumption 1}, then 
\begin{itemize}
	\item [(1)] the volume growth of $g$ is at most 2, i.e. there exists a constant $C>0$ such that Vol($B_R(p))\leq CR^2$ for all $R$ sufficiently large.
	\item [(2)] for large numbers $K,\alpha=2$ and $\beta=\frac{4}{a}-2$, $(X,\omega)$ is of $(K,\alpha,\beta)$-polynomial growth  as defined in \cite[Definitino 1.1]{tian-yau1},
	\item [(3)]	if $\theta$ is a smooth form on $\olsi{X}$ vanishing when restricted to $D$, then $$\left|\theta\right|_{g}=O(r^{-N_0}).$$
\end{itemize}
\end{lemma}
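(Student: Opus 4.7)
The proof of all three parts reduces to working on the end of $X$, where by \textit{Assumption 1}-(3) we transport the problem via the exponential-type map $\Phi$ to a neighborhood of $D$ in $N_D$ and compare with the model K\"ahler metric $g_F$. On any compact piece of $X$, each statement is trivial, so I would split $X$ into a large compact core and its complement, and use (\ref{closeness of metric tensor}), namely $|\Phi^*g-g_F|_{g_F}=O(r_F^{-N_0})$, to conclude that $\Phi^*g$ and $g_F$ are uniformly quasi-isometric outside a sufficiently large compact set.

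For (1), once the quasi-isometry is in place it suffices to bound $\operatorname{Vol}(B_R^{g_F})$. Proposition \ref{basic geometric property}-(3) gives $\operatorname{Vol}(B_R^{g_F})\sim R^{2n(1-\frac{1}{a})}$, and the constraint $a\in\bigl(1,\tfrac{n}{n-1}\bigr]$ in (\ref{conditions on potentials}) yields $2n(1-\tfrac{1}{a})\leq 2n\cdot\tfrac{1}{n}=2$, which is the desired upper bound.

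For (2), I would read off the $(K,\alpha,\beta)$-polynomial growth condition of \cite[Definition 1.1]{tian-yau1} directly from the explicit real-coordinate formula (\ref{metric tensors on real cosrdinates}): $g_F=dr^2+C_1r^{2(1-\frac{1}{a})}g_D+C_2r^{2(1-\frac{2}{a})}\theta^2$. The exponent $\alpha=2$ encodes the maximal volume growth, which is the computation used in (1). The exponent $\beta=\tfrac{4}{a}-2$ records the anisotropy of $g_F$: the ``Hopf circle'' direction $\theta$ carries the coefficient $r^{2(1-\frac{2}{a})}$, so the injectivity radius and the smallest metric direction at $g_F$-distance $r$ from $p$ scale like $r^{1-\frac{2}{a}}$. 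This is precisely what produces the polynomial growth of type $(K,2,\tfrac{4}{a}-2)$ in the Tian--Yau sense, and the estimate then transfers from $g_F$ to $g$ by (\ref{closeness of metric tensor}).

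For (3), note that $\Phi^*\theta$ is a smooth form on $\mathcal{D}$ vanishing on $D$ (since $\Phi$ is the identity along $D$ by Definition \ref{definition of expoenntial map}-(1)). Applying Proposition \ref{basic geometric property}-(4) gives $|\Phi^*\theta|_{g_F}=O(e^{-\delta t})$ for some $\delta>0$. The quasi-isometry (\ref{closeness of metric tensor}) makes $|\cdot|_g$ and $|\cdot|_{g_F}$ comparable at infinity, and Proposition \ref{basic geometric property}-(2) gives $t\sim r^{2/a}$, so $|\theta|_g=O(e^{-\delta r^{2/a}})$, which trivially dominates any polynomial decay $O(r^{-N_0})$.

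The only part that is not pure bookkeeping is (2), where the main obstacle is matching the explicit scaling coming from (\ref{metric tensors on real cosrdinates}) with the particular form of the condition in \cite[Definition 1.1]{tian-yau1}; the other two statements follow at once from the model asymptotics and Proposition \ref{basic geometric property}.
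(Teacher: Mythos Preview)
Your proposal is correct and follows exactly the route the paper indicates: immediately before the lemma the paper states (without a formal proof environment) that the result follows directly from the metric asymptotics (\ref{closeness of metric tensor}), the explicit form of $g_F$ in (\ref{metric tensors on real cosrdinates}), and \textit{Assumption 1}. Your write-up simply unpacks these ingredients via Proposition \ref{basic geometric property}, which is precisely what the paper has in mind; in particular your observations that $2n(1-\tfrac{1}{a})\le 2$ for (1), that the $\theta$-direction scaling $r^{2(1-2/a)}$ in (\ref{metric tensors on real cosrdinates}) yields $\beta=\tfrac{4}{a}-2$ for (2), and that Proposition \ref{basic geometric property}-(4) gives exponential decay dominating $O(r^{-N_0})$ for (3), are all the intended arguments.
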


	That $(M,g)$ is of $(K,\alpha,\beta)$-polynomial growth is important for us since we need the weighted Sobolev inequality in  \cite[Proposition 2.1]{tian-yau1} to prove a weighted mean value inequality in the next subsection. 

\subsection{A weighted mean value inequality}
In this subsection, using a weighted Sobolev inequality in \cite{tian-yau1}, we prove a weighted mean value inequality for nonnegative functions which are almost subharmonic. This is important when we run Simpson's argument to get a uniform $C^0$-estimate. As usual, $r$ denotes the distance function to a fixed base point induced by a Riemannian metric.
\begin{lemma}\label{weighted sobolev inequality}
Let $(X,g)$ be a Riemannian manifold which is of $(K,\alpha,\beta)$-polynomial growth as defined in \cite{tian-yau1}. Let $u$ be a nonnegative compactly supported Lipschitz function satisfying $\Delta u \leq f$ in the weak sense. Suppose  that $|f|=O(r^{-N})$, for some $N\geq 2+\alpha+\beta$, then there exist $C_i=C_i(n, N)$ such that 
	\begin{equation}\label{mean value inequality}
		\left\Vert u\right \Vert_{L^{\infty}}\leq C_1\int(1+r)^{-N}u+C_2
	\end{equation}
\end{lemma}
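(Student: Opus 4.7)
The strategy is a Moser iteration using the weighted Sobolev inequality of Tian--Yau [Prop.~2.1], available here by Lemma~\ref{vanishing at infinity decay}-(2). The threshold $N\geq 2+\alpha+\beta$ is used precisely to match the decay of the inhomogeneous term $f$ to the weight that appears in that Sobolev inequality.

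I would first normalize by replacing $u$ with $\tilde u=u+k$ for a sufficiently large constant $k\geq 1$ (depending on the implicit constant in $|f|=O(r^{-N})$). Then $\tilde u\geq 1$, which allows the use of $\tilde u^{p-1}\leq\tilde u^p$ to absorb the $|f|$-term in the Caccioppoli inequality below. Testing the weak inequality $\Delta \tilde u\leq f$ against $\eta^2\tilde u^{p-1}$ for $p\geq 2$ and a smooth cutoff $\eta$ (a valid test function since $u$ is compactly supported Lipschitz and $\tilde u\geq 1$), integration by parts together with Cauchy--Schwarz gives
\begin{equation*}
\int \eta^2\bigl|\nabla(\tilde u^{p/2})\bigr|^2 \leq C\,p\int\bigl(|\nabla \eta|^2+ (1+r)^{-N}\eta^2\bigr)\tilde u^p,
\end{equation*}
with $C$ independent of $p$.

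Next, I would combine this Caccioppoli estimate with the weighted Sobolev inequality applied to $\phi=\eta\,\tilde u^{p/2}$, which controls a weighted $L^{2\chi}$-norm of $\phi$ by $\|\nabla\phi\|_{L^2}^2$ for a Sobolev exponent $\chi=n/(n-1)$. Choosing a sequence of cutoffs $\eta_j$ and exponents $p_j=2\chi^j$ in the standard Moser scheme and iterating, the telescoping product converges because $\sum 1/\chi^j<\infty$, and one obtains a global bound of the form
\begin{equation*}
\|\tilde u\|_{L^\infty}\leq C\Bigl(\int (1+r)^{-N}\tilde u^{2}\Bigr)^{1/2}.
\end{equation*}
The assumption $N\geq 2+\alpha+\beta$ enters here: it is what ensures the weights produced by successive applications of the Sobolev inequality and those coming from the $|f|$-term chain together to produce the single weight $(1+r)^{-N}$ on the right, with all intermediate integrals finite.

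Finally, I would descend from $L^2$ to $L^1$ on the right-hand side. Using $\tilde u\leq \|\tilde u\|_{L^\infty}$ on the integrand,
\begin{equation*}
\Bigl(\int (1+r)^{-N}\tilde u^{2}\Bigr)^{1/2} \leq \|\tilde u\|_{L^\infty}^{1/2}\Bigl(\int (1+r)^{-N}\tilde u\Bigr)^{1/2}.
\end{equation*}
Substituting into the previous display and absorbing the factor $\|\tilde u\|_{L^\infty}^{1/2}$ on the left (legitimate since $\|\tilde u\|_{L^\infty}$ is finite by compact support of $u$) gives $\|\tilde u\|_{L^\infty}\leq C^2\int(1+r)^{-N}\tilde u$. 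Unpacking $\tilde u=u+k$ and using $\int(1+r)^{-N}<\infty$ (which holds since $N\geq 2+\alpha+\beta>2$ and, by Lemma~\ref{vanishing at infinity decay}-(1), the volume growth of $g$ is at most $2$) yields the stated inequality $\|u\|_{L^\infty}\leq C_1\int(1+r)^{-N}u+C_2$. The main technical obstacle is the careful bookkeeping of weights through the iteration: the weights from the Sobolev step and from the $|f|$-term must chain cleanly so that the final estimate has weight exactly $(1+r)^{-N}$, and this matching is what forces the sharp threshold $N\geq 2+\alpha+\beta$.
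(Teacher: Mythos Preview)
Your overall plan---Moser iteration driven by the Tian--Yau weighted Sobolev inequality---is exactly what the paper does, and the descent from $L^2$ to $L^1$ at the end is fine. But there is a genuine gap in how you invoke that Sobolev inequality. Tian--Yau's Proposition~2.1 does \emph{not} bound a weighted $L^{2\chi}$-norm of $\phi$ by $\|\nabla\phi\|_{L^2}$; it bounds the weighted $L^{2\gamma}$-norm of $\phi-\bar\phi$, where $\bar\phi=\int\phi\,d\mu$ is the $d\mu$-mean and the exponent is $\gamma=\frac{2n+1}{2n-1}$ (not $n/(n-1)$). The mean term cannot simply be dropped: it is precisely the source of the additive constant $C_2$, and the paper handles it by a dichotomy argument---either $\int u^{p_i}\,d\mu\leq 1$ along a subsequence, giving $\|u\|_{L^\infty}\leq 1$ directly, or eventually $\int u^{p_i}\,d\mu>1$, which allows one to compare the two terms coming from triangle $+$ H\"older and iterate cleanly from some index $i_0$. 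Your sketch omits this entirely.

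Your shift $\tilde u=u+k$ could in fact be used to absorb the mean term: since $d\mu$ is a probability measure, Jensen gives $\overline{\tilde u^{p/2}}\leq(\int\tilde u^{p}\,d\mu)^{1/2}$, so the triangle inequality yields $\|\tilde u\|_{L^{p\gamma}(d\mu)}^{p/2}\leq C\|\nabla\tilde u^{p/2}\|_{L^2}+\|\tilde u\|_{L^{p}(d\mu)}^{p/2}$ and the iteration closes. But you would need to actually carry this out rather than state the Sobolev inequality incorrectly. Relatedly, the cut-offs $\eta_j$ are unnecessary and muddy the Caccioppoli step: since $u$ is compactly supported, the paper simply tests against $u^p$; in your setup one could test against the compactly supported nonnegative function $\tilde u^{p-1}-k^{p-1}$. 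Introducing $\eta$ produces an extra $\int|\nabla\eta|^2\tilde u^p$ term that does \emph{not} carry the weight $(1+r)^{-N}$, and on a manifold with volume growth of order $2$ this term need not vanish as the cut-off exhausts $X$. Finally, the condition $N\geq 2+\alpha+\beta$ is not a ``chaining of weights'' through the iteration; it is the hypothesis under which Tian--Yau's inequality holds with weight $(1+r)^{-N}$, and that same weight recurs unchanged at every step.
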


\begin{proof}
	The following argument is the standard Moser iteration with the help of the weighted Sobolev inequality in  \cite[Proposition 2.1]{tian-yau1}.
	
Let $\gamma=\frac{2n+1}{2n-1}$. Note that we have $\int u^p\Delta u\ \leq \int u^pf$ for any $p\geq 1$. Integration by parts and using that $|f|=O(r^{-N})$, we have
	\begin{equation*}
		\int |\nabla u^{\frac{p+1}{2}}|^2 \leq Cp\int u^p(1+r)^{-N}.
	\end{equation*}
	 Let $d\mu=(1+r)^{-N}dV_g$ and without loss of generality, we may assume $d\mu$ has total mass 1. Then the weighted Sobolev inequality shows that 
	 \begin{equation*}
	 	\left(\int \left|u^{\frac{p+1}{2}}-\int u^{\frac{p+1}{2}}d\mu\right|^{2\gamma}d\mu\right)^{\frac{1}{2\gamma}}\leq C\left(\int |\nabla u^{\frac{p+1}{2}}|^2\right)^{\frac{1}{2}}\leq C p^{\frac{1}{2}}\left(\int u^p d\mu\right)^{\frac{1}{2}}.
	 \end{equation*}
	 Applying the triangle inequality and H\"{o}lder inequality, we get 
	 \begin{equation*}
	 \begin{aligned}
	 \left(\int u^{(p+1)\gamma}d\mu\right)^{\frac{1}{2\gamma}}&\leq C_1\int u^{\frac{p+1}{2}}d\mu+C_2p^{\frac{1}{2}}\left(\int u^pd\mu\right)^{\frac{1}{2}}\\
	 &\leq C_1\left(\int u^{p+1}d\mu\right)^{\frac{1}{2}}+C_2p^{\frac{1}{2}}\left(\int u^{p+1}d\mu\right)^{\frac{p}{2(p+1)}}.
	 \end{aligned}
	 \end{equation*}
	 Let $p_i=\gamma^i$, $i=0,1,\cdots$. We have for any $i$
	 \begin{equation*}
	 	\left(\int u^{p_{i+1}}d\mu\right)^{\frac{1}{\gamma}}\leq C_1 \int u^{p_i}d\mu+C_2p_i\left(\int u^{p_i}d\mu\right)^{\frac{p_i}{p_{i+1}}}.
	 \end{equation*}
	 Either there exists a sequence of $p_{i_j}\rightarrow \infty$ such that $\int u^{p_{i_j}}d \mu \leq 1$, which implies that $\Vert u\Vert_{L^{\infty}}\leq 1$, or there exists a smallest $i_0$ such that and $\int u^{p_{i}}d\mu > 1$ for $i\geq i_0$. In the second case, we have 

\begin{equation*} 
\begin{aligned}
	&\Vert u\Vert_{L^{p_{i_0}}}\leq \max\left\{ \Vert u\Vert_{L^1(d\mu)}, C p_{i_0}^{\frac{1}{p_{i_0}}}\right\}\leq C_1\Vert u\Vert_{L^1(d\mu)}+C_2\\
	&\left(\int u^{p_{i+1}}d\mu\right)^{\frac{1}{\gamma}}\leq C p_i\int u^{p_i}d\mu, \ \text{for}\ i\geq i_0.
\end{aligned}
\end{equation*}
Iterating gives that 
\begin{equation*}
	\Vert u\Vert_{L^{\infty}}=\lim_{i\rightarrow \infty}\Vert u\Vert_{L^{p_i}(d\mu)}\leq C \Vert u\Vert_{L^{p_{i_0}}}\leq C_1\Vert u\Vert_{L^1(d\mu)}+C_2
\end{equation*}	 
\end{proof}

\subsection{The assumption on the degree $a$}
The only reason why we need to assume  $a\leq \frac{n}{n-1}$ is that the volume growth of the corresponding Riemannian metric is at most 2. In fact we have the following easy but useful degree vanishing property for Rimannian manifolds with at most quadratic volume growth.
\begin{lemma}\label{exact integral imply degree 0}
	Let $(M,g)$ be a complete Riemannian manifold with volume growth order at most 2. Let $u\in C^{\infty}(M)$ satisfying $|\nabla u|\in L^2$ and $\Delta u\in L^1$, then 
	\begin{equation*}
		\int_{M} \Delta u \ dV_{g}=0.
	\end{equation*}
\end{lemma}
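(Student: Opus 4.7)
The natural strategy is the classical Karp-type argument: integrate $\Delta u$ against a compactly supported cutoff, integrate by parts, and exploit the interplay between the quadratic volume growth and the $L^2$ control on $\nabla u$ to kill the boundary term.

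Concretely, fix a base point $p \in M$ and for each $R > 0$ choose a Lipschitz cutoff $\phi_R : M \to [0,1]$ with $\phi_R \equiv 1$ on $B_R(p)$, $\phi_R \equiv 0$ off $B_{2R}(p)$, and $|\nabla \phi_R| \leq C/R$ pointwise (e.g.\ a piecewise linear function of the distance, mollified if desired). Since $\phi_R$ has compact support and $u$ is smooth, integration by parts gives
\begin{equation*}
\int_M \phi_R \, \Delta u \, dV_g \;=\; c \int_M \langle \nabla \phi_R, \nabla u \rangle \, dV_g,
\end{equation*}
where $c \in \{\pm 1, \pm \tfrac{1}{2}\}$ is the sign/normalization constant relating the paper's $\Delta = \ii \Lambda \pp \partial$ to the Laplace-Beltrami operator (the precise constant is irrelevant). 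By Cauchy-Schwarz,
\begin{equation*}
\left| \int_M \langle \nabla \phi_R, \nabla u \rangle \, dV_g \right|
\;\leq\; \left( \int_{B_{2R}\setminus B_R} |\nabla \phi_R|^2 \right)^{1/2} \left( \int_{B_{2R}\setminus B_R} |\nabla u|^2 \right)^{1/2}.
\end{equation*}

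The first factor is the crucial one: using $|\nabla \phi_R| \leq C/R$ together with the hypothesis $\operatorname{Vol}(B_{2R}) \leq C R^2$, it is bounded by a constant independent of $R$. The second factor tends to $0$ as $R \to \infty$ because $|\nabla u| \in L^2(M)$ and $B_{2R} \setminus B_R$ exhausts the complement of any fixed compact set. Hence the right-hand side tends to $0$. On the other hand, since $\Delta u \in L^1(M)$ and $\phi_R \to 1$ pointwise with $|\phi_R| \leq 1$, dominated convergence gives $\int_M \phi_R \Delta u \, dV_g \to \int_M \Delta u \, dV_g$. Combining the two limits yields $\int_M \Delta u \, dV_g = 0$.

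The only ``hard'' point is really the balancing act in the Cauchy-Schwarz step: the cutoff gradient contributes $R^{-2}$, the annular volume contributes $R^2$, so the product is exactly borderline. This is why the quadratic volume growth hypothesis cannot be relaxed within this approach — any faster growth would force a slower-decaying cutoff, and one would need a stronger integrability assumption on $\nabla u$ (e.g.\ weighted $L^2$) to compensate. No other step requires any delicate analysis.
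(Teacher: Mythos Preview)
Your proof is correct and uses essentially the same mechanism as the paper: Cauchy--Schwarz on an annulus, the quadratic volume bound to control the cutoff factor, and $|\nabla u|\in L^2$ to kill the remaining factor. The only cosmetic difference is that the paper applies Stokes' theorem directly on geodesic balls $B_{R_i}$ for a well-chosen sequence $R_i\to\infty$ (extracted from the fact that $\frac{1}{R}\int_{B_{2R}\setminus B_R}|\nabla u|\to 0$) rather than using a cutoff, but the underlying idea is identical.
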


\begin{proof}
By the Cauchy-Schwarz inequality and the assumption on the volume growth, we have 
\begin{equation*}
	\frac{1}{R}\int_{B_{2R}\backslash B_R}|\nabla u|dV_g\leq C \left(\int_{B_{2R}\backslash B_R}|\nabla u|^2dV_g\right)^{\frac{1}{2}}\rightarrow 0 \text{ as $R\rightarrow \infty$} .
\end{equation*}

Therefore there is a sequence $R_i\rightarrow \infty$ such that   $\int_{\partial B_{R_i}}|\nabla u|\ dS\rightarrow 0$.
	Since $\Delta u$ is integrable, $\int_{M} \Delta u \ dV_{g}=\lim_{i\rightarrow\infty}\int_{B_{R_i}} \Delta u\ dV_g $ for any sequence $R_i$ going to infinity. Using Stokes' theorem, we have
	\begin{equation*}
		\left|\int_{B_{R_i}}\Delta u\ dV_g\right|\leq \int_{\partial B_{R_i}}|\nabla u|\ dS\rightarrow 0 \text{ as $R_i\rightarrow\infty$}.
	\end{equation*}
\end{proof}

\subsection{Assumption on $\Phi$ and $\omega$}

By Proposition \ref{basic geometric property} and the assumption on the decomposition of $\omega=\omega_0+\ii \partial\pp \varphi$, we know that (\ref{assumption on asymptotics}) is equivalent to say that 
\begin{equation}\label{approximate for exact part}
		\left|\Phi^*(\ii\partial\pp \varphi)-\omega_{F}\right|_{g_{F}}=O(r_{F}^{-N_0}).
\end{equation}
Writing $\Phi^*(\ii\partial\pp \varphi)-\omega_{F}$ as $d(\Phi^*d^c\varphi-d^cF)$ and integrating this exact 2-form,  we can show the following result, whose proof is similar to that given in \cite[Lemma 3.7]{hsvz1}.
\begin{lemma}\label{a good 1-form potential}
	There exists a real 1-form $\eta$ outside a compact set of $\mathcal C$ with $$|\eta|_{g_F}=O(r_{F}^{-N_0+1+\frac{1}{a}})$$ such that  
	\begin{equation*}
		\Phi^*(\ii\partial\pp \varphi)-\omega_{F}=d\eta
	\end{equation*}
\end{lemma}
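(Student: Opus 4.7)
The $2$-form $\alpha := \Phi^*(\ii\partial\pp\varphi) - \omega_F$ is exact on the end of $\mathcal C$, with the tautological primitive $\beta := \Phi^*d^c\varphi - d^cF$ highlighted in the paragraph before the statement. The difficulty is that $\beta$ itself carries no pointwise estimate, so it must be replaced by a primitive with quantified decay. My plan is to exploit the asymptotic warped-product structure recorded in \eqref{metric tensors on real cosrdinates} and produce such a primitive by radial integration from infinity.

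The first step is to identify a neighbourhood of $D$ in $\mathcal C$ with $(R_0,\infty)_r \times Y$, where $Y$ is the $S^1$-bundle over $D$ appearing in \eqref{metric tensors on real cosrdinates}, and decompose
\[
\alpha \;=\; dr \wedge A(r,\cdot) \;+\; B(r,\cdot),
\]
with $A$ a family of $1$-forms and $B$ a family of $2$-forms on $Y$ depending on $r$. Closedness of $\alpha$ yields $\partial_r B = d_Y A$ and $d_Y B = 0$. I would then set
\[
\eta(r,y) \;:=\; -\int_r^{\infty} A(s,y)\, ds.
\]
Convergence of this integral follows at once from $|\alpha|_{g_F} = O(r_F^{-N_0})$ together with $N_0 \geq 8$. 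The same bound, combined with the scalings $|dz|_{g_F} \sim r^{1/a-1}$ and $|\theta|_{g_F} \sim r^{2/a-1}$ read off from \eqref{metric tensors on real cosrdinates}, also forces $\lim_{s\to\infty} B(s,\cdot) = 0$ in a fixed local frame on $Y$. Combined with $\partial_r B = d_Y A$, this gives $d\eta = \alpha$.

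For the pointwise bound, write $A(s,y) = A_z(s,y)\,dz + A_\theta(s,y)\,\theta$ in a fixed local frame on $Y$; the hypothesis $|\alpha|_{g_F} = O(r^{-N_0})$ together with \eqref{metric tensors on real cosrdinates} gives $|A_z(s,\cdot)| = O(s^{-N_0+1-1/a})$ and $|A_\theta(s,\cdot)| = O(s^{-N_0+1-2/a})$. Radial integration gains one power of $s$, and remeasuring the resulting $1$-form in $g_F$ absorbs a further factor of at most $r^{2/a-1}$, yielding $|\eta|_{g_F} = O(r_F^{-N_0+1})$, which implies the claimed bound. The main obstacle is really the careful bookkeeping of the scaling factors in \eqref{metric tensors on real cosrdinates} and verifying $\lim_{s\to\infty} B(s,\cdot)=0$ in a fixed frame so that $\eta$ is a genuine primitive; once these are arranged, both convergence and the decay estimate reduce to elementary integration.
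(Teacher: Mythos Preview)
Your argument follows the same template as the paper's: identify the end with $(R_0,\infty)\times Y$, split the closed $2$-form into $dr\wedge A+B$, and produce a primitive by radially integrating $A$. The only real difference is the direction of integration. The paper first multiplies the tautological primitive $\Phi^*d^c\varphi-d^cF$ by a cut-off $\chi$ (equal to $1$ near $D$), sets $\theta=d(\chi(\Phi^*d^c\varphi-d^cF))$, and then defines $\eta=\int_{r_0}^{r}A\,ds$ from a \emph{finite} radius where $\theta$ vanishes; you skip the cut-off and integrate from $\infty$. Your choice is the one that actually delivers the stated exponent: integrating from $r_0$ leaves a constant-in-$r$ tail $\int_{r_0}^{\infty}A\,ds$ whose $g_F$-norm only behaves like $r^{2/a-1}$, whereas $-\int_r^{\infty}A\,ds$ decays as $O(r_F^{-N_0+1})$, as you compute.

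There is one point to tighten. From $\partial_r B=d_YA$ and $B(\infty)=0$ you conclude $d\eta=\alpha$, but this implicitly swaps $d_Y$ with the improper integral $\int_r^{\infty}$, and Assumption~1 supplies only a $C^0$ bound $|\alpha|_{g_F}=O(r_F^{-N_0})$, not derivative bounds on $A$. The cleanest fix is the obvious truncation: set $\eta_R=-\int_r^{R}A\,ds$, for which the swap over the compact interval is routine and gives $d\eta_R=\alpha-B(R)$; since $\eta_R\to\eta$ and $B(R)\to 0$ locally uniformly in a fixed frame, $d\eta=\alpha$ follows (first distributionally, hence classically because $\alpha$ is smooth). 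This is precisely the role played by the paper's cut-off, which reduces everything to a finite integral from the outset.
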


\begin{proof}Choose a cut-off function $\chi$ which equals 1
	on $\{0<|\xi|_{h_D}< \delta\}$and 0 on $\{|\xi|_{h_D}> 2\delta\}$ for some $\delta>0$ . Let 
	\begin{equation*}
		\theta=d(\chi(\Phi^*(d^c \varphi)-d^cF))
	\end{equation*}
	And it suffices to write $\theta=d\eta$ with $|\eta|_{g_F}=O(r_{F}^{-N_0+1+\frac{2}{a}})$.

We identify $\mathcal C$ with $\mathbb{R_{+}}\times Y$ in such a way that the Riemannian metric $g_F$ can be written as $dr^2+g_r$, where $r$ is the coordinate function on $\mathbb R_{+}$ and $g_r$ is a metric on $\{r\} \times Y^{2n-1}$ that depends on $r$.
	 Then $\theta$ is supported on the region $\{r>r_0\}$ for some $r_0>0$. Then there exist a 1-form $\alpha$ and a 2-form $\beta$ supported on the region $\{r>r_0\}$ such that $\left.\partial_{r}\right\lrcorner \alpha=0$ and 
	$\left.\partial_{r}\right\lrcorner \beta=0$
	\begin{equation*}
		\theta=dr\wedge\alpha+\beta.
	\end{equation*}
Then we define 
\begin{equation*}
	\eta=\int_{r_0}^r\alpha\ dr.
\end{equation*}
$\theta$ is closed, therefore $d\alpha+\partial_r\beta=0$ and then one can directly check that $\theta=d\eta$. Since $dr\wedge \alpha$ is perpendicular to $\beta$ and we assumed $|\theta|_{g_F}=O(r_F^{-N_0})$, we obtain that $|\alpha|_{g_F}=O(r_F^{-N_0})$. Fix a smooth background Riemannian metric $\bar g$ on $Y$. Then from (\ref{metric tensors on real cosrdinates}) and (\ref{closeness of metric tensor}), we obtain the following estimate
\begin{equation*}
	C^{-1}r^{2(1-\frac{2}{a})}\bar g\leq g_r \leq C r^{2(1-\frac{1}{a})}\bar g.
\end{equation*} 
Then the estimate for $|\eta|_{g_F}$ follows from a direct computation.
\end{proof}

\begin{remark}\label{a new 1-form potential}
	A similar argument can be applied to $dd^c\varphi$ directly on $X$ (using \textit{Assumption 1}) and we obtain that there exists a cut-off function $\chi$ supported on a compact set and a smooth real 1-form $\psi$ supported outside a compact set satisfying $|\psi|=O(r^{1+\frac{1}{a}})$ such that 
	\begin{equation*}
		dd^c\varphi=dd^c(\chi\varphi)+d\psi
	\end{equation*} This is quite useful when we want to integrate by parts on $X$.
\end{remark}

 We assumed that $\omega_0$ is a closed (1,1)-form on $\olsi{X}$ and vanishes when restricted to $D$. In particular, $\int_{\olsi{X}}c_1(D)\wedge\omega_0^{n-1}=0$. Then by  the Lelong-Poincar\'e formula, we obtain the following. 

\begin{lemma}\label{poincare lelong}
	Let $S\in H^0(\olsi{X},L_D)$ be a defining section of $D$ and $h$ be any smooth hermitian metric on $L_D$. Let $f=\log |S|^2_h$, then we have
	\begin{equation*}
		\int_Xdd^cf\wedge \omega_0^{n-1}=0
	\end{equation*}
\end{lemma}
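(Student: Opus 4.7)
The plan is to identify $dd^cf$ on $X$ with the restriction of a smooth, globally-defined $(1,1)$-form on $\olsi{X}$ representing $-2\pi c_1(D)$; the integral then reduces to a topological pairing whose vanishing is already recorded immediately before the statement.

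Concretely, I would work in a local holomorphic trivialization of $L_D$ around a point of $D$, writing $|S|^2_h = |s|^2 e^{-\phi}$ with $s$ a local holomorphic defining function of $D$ and $\phi$ a local potential so that $i\Theta_h = i\partial\pp\phi = dd^c\phi$. Since $S$ is nowhere zero on $X$, the function $s$ is non-vanishing on $X$ and $\log|s|^2$ is pluriharmonic there, hence $dd^c\log|s|^2=0$ on $X$. Therefore
$$dd^c f \;=\; dd^c\bigl(\log|s|^2 - \phi\bigr) \;=\; -dd^c\phi \;=\; -i\Theta_h \qquad \text{on } X,$$
and the right-hand side extends to a smooth closed $(1,1)$-form on the compact manifold $\olsi{X}$ whose de Rham class is $-2\pi c_1(L_D) = -2\pi c_1(D)$. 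This is the content of the Poincaré--Lelong formula away from $D$, and can also be read as the current identity $dd^c f = 2\pi[D] - i\Theta_h$ on $\olsi{X}$ with the singular part localized on $D$.

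Because $D$ has Lebesgue measure zero in $\olsi{X}$ and the extended $2n$-form $i\Theta_h\wedge\omega_0^{n-1}$ is smooth on the compact $\olsi{X}$, integration over $X$ agrees with integration over $\olsi{X}$, and
$$\int_X dd^c f\wedge\omega_0^{n-1} \;=\; -\int_{\olsi{X}} i\Theta_h\wedge\omega_0^{n-1} \;=\; -2\pi\int_{\olsi{X}} c_1(D)\wedge[\omega_0]^{n-1} \;=\; 0,$$
using precisely the identity $\int_{\olsi{X}}c_1(D)\wedge\omega_0^{n-1}=0$ recorded in the paragraph preceding the lemma (which itself follows from $\omega_0|_D=0$ and Poincaré duality). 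There is no substantial obstacle: the only technical issue is ensuring that the integral over $X$ makes sense, which is automatic from the above identification, since the integrand is the restriction to $X$ of a smooth $2n$-form on the compact manifold $\olsi{X}$.
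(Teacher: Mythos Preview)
Your proof is correct and is essentially the same argument the paper has in mind: the paper simply cites the Lelong--Poincar\'e formula together with the identity $\int_{\olsi{X}}c_1(D)\wedge\omega_0^{n-1}=0$ noted just before, and you have spelled out precisely that computation by identifying $dd^cf$ on $X$ with the smooth form $-i\Theta_h$ and reducing to the cohomological pairing on $\olsi{X}$.
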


\section{Hermitian holomorphic vector bundles} \label{basics on vector bundle}
	Firstly let us recall the definition of projectively Hermitian-Yang-Mills metrics. Given a hermitian metric $H$ on a holomorphic vector bundle $E$, there is a unique connection compatible with these two structures and it is called the \textit{Chern connection} of $(E,H)$. Let $F_H$ denote the curvature of the \textit{Chern connection} and we call it the \textit{Chern curvature} of $(E,H)$. Let $E$ be a holomorphic vector bundle on a K\"ahler manifold $(X,\omega)$. A hermitian metric $H$ is called an $\omega$-projectively Hermitian-Yang-Mills metric ($\omega$-PHYM) if
\begin{equation}\label{definition of hym}
	 \Lambda_{\omega}F_H=\frac{\tr (\Lambda_{\omega}F_H)}{\rank (E)}\mathrm{id}_E,
\end{equation} Accordingly the Chern connection is called an $\omega$-PHYM connection if (\ref{definition of hym}) is satisfied.  A hermitian metric $H$ is called an $\omega$-Hermitian-Yang-Mills ($\omega$-HYM) metric if $$\Lambda_{\omega}F_H=\lambda \mathrm{id}_E$$ for some constant $\lambda$. We also use the notation $F^{\perp}_H$  to denote the trace-free part of the curvature form, i.e. $F_H^{\perp}=F_H-\frac{\tr (F_H)}{\rank (E)}\mathrm{id}_E$. Then (\ref{definition of hym}) is equivalent to say $\Lambda_{\omega}F^{\perp}_H=0$.

\begin{remark}
	Note that the PHYM property is conformally invariant, i.e. if a hermitian metric $H_0$ satisfies (\ref{definition of hym}), then $H=H_0e^f$ also satisfies (\ref{definition of hym}) for every smooth function $f$. Moreover to get a HYM metric from a PHYM metric, it suffices to solve a Poisson equation, which is always solvable for a constant $\lambda$ such that $\int_X(\tr(\Lambda_{\omega}F_H)-\lambda)\omega^n=0$ when $M$ is compact. 
\end{remark}

\subsection{Basic differential inequalities} Let $E$ be a holomorphic vector bundle and $H,K$ be two hermitian metrics on $E$, then we have an endomorphism $h$ defined by
\begin{equation*}
	\left<s,t\right>_H=\left<h(s),t\right>_K.
\end{equation*}
We will write this as $H=Kh$ and $h=K^{-1}H$ interchangeably. Note that $h$ is positive and self-adjoint with respect to both $H$ and $K$. Let $\partial_H$ and $\partial_K$ denote the $(1,0)$ part of the Chern connection determined by $H$ and $K$ respectively. By abuse of notation, we use the same notation to denote the induced connection on $\operatorname{End}( E)$. Simpson showed that 
\begin{lemma}[{\cite{simpson}}]\label{basic differential inequlaties}
Let $H=Kh$, then we have 
\begin{itemize}
	\item [(1)] $\partial_H=\partial_K+h^{-1}\partial_K(h)$;
	\item [(2)] $\Delta_{K}h=h\ii(\Lambda F_H-\Lambda F_K)+\ii \Lambda \pp (h) h^{-1}\partial_K(h)$ where $\Delta_K=\ii\Lambda\pp\partial_K$;
	\item [(3)]$\Delta \log \operatorname{tr}(h) \leq 2\left(\left|\Lambda F_{H}\right|_{H}+\left|\Lambda F_{K}\right|_{K}\right)$.
\end{itemize}
Moreover in (2) and (3), if $\det(h)=1$ then the curvatures can be replaced by the trace-free curvatures $F^{\perp}$.
\end{lemma}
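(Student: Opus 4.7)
The plan is to verify (1), (2), and (3) in order, since each follows from the previous by short manipulations of the Chern connection identities. The main technical input is a Kato-type inequality at the end.

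For (1), I would work locally and use the two defining properties of the Chern connection (compatibility with $\bar\partial$ and with the hermitian metric). Applying $\partial$ to the pointwise identity $\langle s,t\rangle_H = \langle h(s), t\rangle_K$ and expanding both sides with the Leibniz rule, together with the compatibility of $\partial_K$ with $K$, gives $\langle h\,\partial_H s, t\rangle_K = \langle h\,\partial_K s + \partial_K(h)(s), t\rangle_K$ for all local sections $s,t$. Cancelling $h$ (which is invertible) and using non-degeneracy of the pairing yields $\partial_H = \partial_K + h^{-1}\partial_K(h)$.

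For (2), I would take $\bar\partial$ of the operator-valued identity $\partial_H - \partial_K = h^{-1}\partial_K(h)$. Since both Chern connections have the same $(0,1)$-part, the difference of their curvatures is $F_H - F_K = \bar\partial(h^{-1}\partial_K h)$. Expanding via Leibniz and the identity $\bar\partial(h^{-1}) = -h^{-1}\bar\partial(h)h^{-1}$ gives $h(F_H - F_K) = \bar\partial\partial_K(h) - \bar\partial(h) h^{-1}\wedge \partial_K(h)$. Applying $i\Lambda$ and recognizing $\Delta_K = i\Lambda\bar\partial\partial_K$ produces the formula in (2).

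For (3), I would take the pointwise trace of (2) and combine with the chain-rule identity $\Delta \log \tr h = (\Delta \tr h)/\tr h + |\partial \tr h|^2/(\tr h)^2$ (for the paper's sign convention $\Delta = i\Lambda \bar\partial\partial$). The curvature term $i\,\tr[h(\Lambda F_H - \Lambda F_K)]$ is bounded in absolute value by $\tr(h)(|\Lambda F_H|_H + |\Lambda F_K|_K)$ via Cauchy--Schwarz on the trace inner product, using that $h$ is positive self-adjoint with respect to both $H$ and $K$ with the same eigenvalues, so that $\tr(h^2)^{1/2}\leq \tr(h)$. The remaining contribution comes from the term $\tr(i\Lambda\bar\partial(h) h^{-1}\partial_K h)$, which in a $K$-orthonormal frame diagonalizing $h$ equals a non-positive quantity; a Kato-type Cauchy--Schwarz on each coordinate component then shows that this term dominates $-|\partial \tr h|^2/\tr h$. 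Consequently it exactly absorbs the positive gradient contribution $|\partial \tr h|^2/(\tr h)^2$ in $\Delta \log \tr h$, leaving the stated bound (the factor $2$ in the statement is a loose constant accommodating the various norm conventions). Finally, when $\det h = 1$ one has $\tr F_H = \tr F_K$, so $\Lambda F_H - \Lambda F_K$ is automatically trace-free and coincides with $\Lambda F_H^{\perp} - \Lambda F_K^{\perp}$; this gives the trace-free versions of (2) and (3) at no extra cost.

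The main obstacle is the Kato-type inequality that makes the gradient cancellation in (3) work; the rest reduces to a careful unwinding of the defining identities of the Chern connection.
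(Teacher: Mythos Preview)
The paper does not actually prove this lemma; it is quoted from Simpson \cite{simpson} with only the statement given. Your sketch is essentially Simpson's original argument and is correct: (1) follows from metric compatibility, (2) from applying $\bar\partial$ to (1), and (3) from tracing (2), bounding the curvature term by Cauchy--Schwarz (using $\tr(h^2)^{1/2}\le \tr h$ for positive $h$), and absorbing the gradient term via the Kato-type inequality $\sum_{i,j,k}\lambda_j^{-1}|\partial_k h_{ji}|^2 \ge |\partial\tr h|^2/\tr h$ in a $K$-orthonormal frame diagonalizing $h$. Your remark about the factor $2$ being a loose constant is also accurate---the argument actually yields the inequality with constant $1$.
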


\subsection{Slope stability}\label{definition of stability with respect to a pair}
 If $ \left|\tr(\Lambda_{\omega}F_H)\right| \in L^1$, the $\omega$-degree of $(E,H)$ and $\omega$-slope of $(E,H)$ are defined to be
	 \begin{equation}\label{definition of slope}
	 \begin{aligned}
	 	\deg_{\omega}(E,H)&=\frac{\ii}{2n\pi}\int_M \tr(\Lambda_{\omega}F_H)\omega^n=\frac{\ii}{2\pi}\int_M\tr(F_H)\wedge\omega^{n-1}\\
	 	\mu_{\omega}(E,H)&=\frac{\deg_{\omega}(E,H)}{\rank (E)}
	 \end{aligned}	 	
	 \end{equation} 
	 
Now let us assume $M$ is compact. Integration by parts shows that the degree defined above is independent of the metric $H$ and only depends on  the cohomology class of $[\omega]\in H^2(X,\mathbb R)$, i.e. by the Chern-Weil theory, 
\begin{equation*}
	\deg_{\omega}(E)=\int_Mc_1(E)\wedge\omega^{n-1}.
\end{equation*} And for any coherent subsheaf $S$ of $E$, one can define its $\omega$-degree as follows (see \cite[Chapter 5]{kobayashi-book}). It is shown that $\det S:=(\wedge^rS)^{**}$ is a line bundle, where $r$ is the rank of $S$ and define 
\begin{equation}\label{definition of degree of sheaf}
	\deg_{\omega}(S)=\int_Mc_1(\det S)\wedge\omega^{n-1}.
\end{equation}
And as before we define $\mu_{\omega}(S)$, the $\omega$-slope of $S$, to be $\frac{\deg_{\omega}(S)}{\rank (S)}$. Note that for the definition of $\omega$-degree and $\omega$-slope, we do not need $\omega$ to be a K\"ahler form at all, and a real closed $(1,1)$-form is enough. That is for every real closed $(1,1)$-form $\alpha$, we can define 
\begin{equation*}
	\deg_{\alpha}(S)=\int_Mc_1(\det S)\wedge \alpha^{n-1}.
\end{equation*}
The slope $\mu_{\alpha}(S)$ is defined similarly as before and we will use the notation $\mu(S,\alpha)$ and $\mu_{\alpha}(S)$ interchangeably.

We have the following definition which generalizes the standard slope stability defined for K\"ahler classes in  \cite[Chapter 5]{kobayashi-book}. 
\begin{definition}\label{definition of pair stability}
	Let $M$ be a compact K\"ahler manifold, $\alpha,\beta\in H^{1,1}(M)$ be two cohomology classes, $E$ be a holomorphic vector bundle over $M$.
	\begin{itemize}
		
\item [(1)]We say $E$ is $\alpha$-stable if for every coherent reflexive subsheaf $S$ of $E$ with $0<\rank(S)<\rank(E)$, we have $\mu_{\alpha}(S)<\mu_{\alpha}(E)$; $E$ is $\alpha$-polystable if it is the direct sum of stable vector bundles with the same $\alpha$-slope; $E$ is $\alpha$-semistable if for every coherent reflexive subsheaf $S$ of $E$ with $0<\rank(S)<\rank(E)$, we have $\mu_{\alpha}(S)\leq\mu_{\alpha}(E)$;. 

		\item [(2)] We say $E$ is $(\alpha,\beta)$-stable if every coherent reflexive subsheaf $S$ of $E$ with $0<\rank (S)<\rank (E)$ satisfies either of the following conditions:
	\begin{itemize}
		\item [(a)] $\mu_{\alpha}(S)< \mu_{\alpha}(E)$, or 
		\item[(b)] $\mu_{\alpha}(S)= \mu_{\alpha}(E)$ and $\mu_{\beta}(S)<\mu_{\beta}(E)$.
	\end{itemize}
	
	\end{itemize} 
\end{definition}

From the definition, we know that 
if $\beta=0$, then $E$ is $(\alpha,\beta)$-stable if and only if it is $\alpha$-stable; if $E$ is $\alpha$-stable, then it is $(\alpha,\beta)$-stable for any class $\beta$. In applications, typically the first class $\alpha$ has some positivity. For example, in our Theorem \ref{main theorem}, $\alpha=c_1(D)$ is nef and big.

\begin{remark}
For every coherent subsheaf $S$ of a holomorphic vector bundle $E$, we have an exact sequence of sheaves:
\begin{equation*}
	0\rightarrow S\rightarrow S^{**}\rightarrow S^{**}/S\rightarrow0,
\end{equation*} where $S^{**}/S$ is a torsion sheaf and supports on an analytic set with codimension at least 2. Then by \cite[Section 5.6]{kobayashi-book}, we know $\det S=\det (S^{**})$. In particular, we know that $E$ is $\alpha$-stable (respectively $(\alpha,\beta)$-stable) if and only if the conditions in (1) (respectively (2)) hold for every coherent subsheaf of $E$.

\end{remark}


\subsection{Coherent  subsheaves and weakly holomorphic projection maps}
Let $(E,H)$ be a hermitian holomorphic vector bundle over a K\"ahler manifold $(M,\omega)$. Suppose $S$ is a coherent subsheaf of $E$, since $E$ is torsion-free, then $S$ is torsion free and hence locally free outside $\Sigma$ which is a closed analytic set of codimension at least 2. Moreover on $X\backslash \Sigma$ we have an induced orthogonal projection map $\pi=\pi^H_S$ satisfying
\begin{equation}\label{weakly holomophic map condition}
\pi=\pi^{\star}=\pi^{2}, \quad(\mathrm{id}-\pi) \circ \pp \pi=0.
\end{equation}
Outside the singular set $\Sigma$, the Chern curvature of $(S,H|_S)$ is related to the Chern curvature $(E,H)$ by 

\begin{equation}\label{curvature form for subhseaf}
	F_{S,H}=F_{E,H}|_S-\partial \pi\wedge\pp\pi.
\end{equation}

Let us mention a result in current theory:
\begin{theorem}[{\cite{harvey1974removable}}]\label{current lemma}
	Let $\Sigma$ be a closed analytic subset of codimension at least 2 in a K\"ahler manifold $(M,\omega)$. Assume $T$ is a closed positive current on $M\backslash \Sigma$ of bidegree $(1,1)$, i.e a $(1,1)$-form with distribution coefficients, then the mass of $T$ is locally finite in a neighborhood of $\Sigma$. More precisely, every  $p\in \Sigma$ has a neighborhood $U\subseteq M$ such that 
	\begin{equation*}
		\int_U T\wedge\omega^{n-1}<\infty.
	\end{equation*}
\end{theorem}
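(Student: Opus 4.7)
The plan is to work locally: fix $p\in\Sigma$ and a small coordinate ball $B$ around $p$ on which $\omega$ is uniformly equivalent to the Euclidean K\"ahler form on $\mathbb{C}^n$. Since the assertion is local, it suffices to bound $\int_{B'}T\wedge\omega^{n-1}$ for a ball $B'\subset\subset B$.

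My strategy is to produce a plurisubharmonic potential for $T$ near $\Sigma$. Because $\Sigma$ has complex codimension at least $2$, the complement $B\setminus\Sigma$ is connected and simply connected; combined with Lelong's local $dd^c$-lemma for closed positive $(1,1)$-currents, this yields a plurisubharmonic function $u$ on $B\setminus\Sigma$ with $dd^c u=T$ there.

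The crucial step will be to extend $u$ across $\Sigma$ as a plurisubharmonic function. I plan to establish $u\in L^1_{\mathrm{loc}}(B)$ by a slicing argument: for a generic complex affine line $\ell\subset\mathbb{C}^n$ in the natural $(2n-2)$-parameter family, the codimension hypothesis forces $\ell\cap\Sigma=\emptyset$ for almost every $\ell$, so $u|_\ell$ is a subharmonic function, hence locally $L^1$ with local mass controlled by the supremum over a slightly larger slice. A Fubini-type averaging then yields $u\in L^1_{\mathrm{loc}}(B)$, and the classical Grauert--Remmert-type extension theorem for plurisubharmonic functions (see Demailly, \emph{Complex Analytic and Differential Geometry}) produces a plurisubharmonic extension $\tilde u$ on all of $B$. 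With this in hand, $dd^c\tilde u$ is a closed positive $(1,1)$-current on $B$ of locally finite mass that agrees with $T$ on $B\setminus\Sigma$. The difference between $dd^c\tilde u$ and the trivial extension $\tilde T$ of $T$ is an order-zero current supported on $\Sigma$, and a current of real degree $2$ supported on a set of real codimension at least $4$ must vanish by the standard support theorem for currents of order zero. This identifies $\tilde T = dd^c\tilde u$, giving the required local finiteness of mass.

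The main obstacle I anticipate is the $L^1_{\mathrm{loc}}$ extension step: $u$ could approach $-\infty$ rapidly along $\Sigma$, and one must choose the family of slicing complex lines so that the Fubini estimate is genuinely uniform near $\Sigma$. A cleaner alternative would be to invoke Skoda--El Mir's (or Siu's) extension theorem directly as a black box, but in either approach the codimension-$2$ hypothesis enters precisely through control of the potential near $\Sigma$.
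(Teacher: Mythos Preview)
The paper does not supply its own proof of this statement; it is quoted as a black-box result from Harvey's 1974 paper and used without argument. There is therefore no ``paper's proof'' to compare your attempt against.

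That said, your outline is essentially the standard route in the literature (Harvey, Shiffman, Siu; see also Demailly's \emph{Complex Analytic and Differential Geometry}): produce a local plurisubharmonic potential $u$ for $T$ on $B\setminus\Sigma$ using simple connectivity (real codimension $\geq 4$), extend $u$ across $\Sigma$, and conclude. The one place to tighten is exactly the obstacle you flag: controlling the $L^1$ norm on a slice by the supremum on a larger slice presupposes that $u$ is locally bounded \emph{above} near $\Sigma$, which does not follow automatically from plurisubharmonicity on $B\setminus\Sigma$. You can get this from the same slicing idea applied one step earlier: choose $B''\subset\subset B'\subset\subset B$ with $\partial B'\cap\Sigma=\emptyset$ (a generic radius works), so that $u\leq M:=\sup_{\partial B'}u<\infty$ on the compact set $\partial B'\subset B\setminus\Sigma$; then for any $q\in B''\setminus\Sigma$ a generic complex line through $q$ misses $\Sigma\cap \overline{B'}$, and the maximum principle on that line gives $u(q)\leq M$. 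With the upper bound in hand, your Fubini estimate and the Grauert--Remmert extension theorem go through, and the support-theorem argument identifying $\tilde T=dd^c\tilde u$ is correct.
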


Applying the above theorem to $\tr(\ii\partial\pi\wedge\pp \pi)$, one gets
\begin{equation}\label{L^2}
	\pi\in W^{1,2}_{loc}(M,\operatorname{End}( E);\omega,H).
\end{equation} 

And in general we call $\pi\in W^{1,2}_{loc}(M,\operatorname{End}( E);\omega,H)$ is a weakly holomorphic projection map if it satisfies (\ref{weakly holomophic map condition}) almost everywhere. By the discussion above, we know that for a coherent subsheaf $S$ of $E$, $\pi_{S}^H$ is a weakly holomorphic projection map. A highly nontrivial result due to Uhlenbeck and Yau \cite{uhlenbeck-yau} is that the converse is also true (see also \cite{popovici}). 

\begin{theorem}[{\cite{uhlenbeck-yau}}]\label{uhlenbeckyau}
	Suppose there is a weakly holomorphic projection map $\pi$, then there exists a coherent subsheaf $S$ of $E$ such that
	$\pi=\pi^H_{S}$ almost everywhere. 
\end{theorem}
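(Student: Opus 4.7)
The plan is to reduce the problem to a local regularity statement for $\pi$ and then to promote the resulting smooth subbundle outside a bad set to a coherent subsheaf of $E$. The problem is local, so I would work in a small ball on which $E$ is holomorphically trivialized and $\pi$ becomes a self-adjoint matrix-valued function in $W^{1,2}_{loc}$ satisfying $\pi^2=\pi$ and $(\mathrm{id}-\pi)\pp\pi=0$ almost everywhere. Differentiating $\pi^2=\pi$ gives $\pi\,\pp\pi+\pp\pi\,\pi=\pp\pi$, and combining this with $(\mathrm{id}-\pi)\pp\pi=0$ yields $\pi\,\pp\pi=0$ as well as $\pp\pi=\pi\,\pp\pi\,(\mathrm{id}-\pi)\cdot 0 +(\mathrm{id}-\pi)\partial\pi^{\ast}\cdot 0$-type algebraic identities that combine into a second-order equation of elliptic type for $\pi$ with a quadratic gradient nonlinearity of the schematic form $\Delta\pi = Q(\pi,\nabla\pi)$ where $Q$ is quadratic in $\nabla\pi$ and polynomial in $\pi$.

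Next, I would carry out the regularity argument, which is the technical heart of the theorem. Following Uhlenbeck-Yau, one approximates $\pi$ by a sequence of genuine smooth hermitian endomorphisms $\pi_{\epsilon}$ obtained by flowing along a heat-equation-type regularization that preserves the two algebraic constraints in the limit, and uses the uniform $L^{\infty}$ bound $0\le\pi\le\mathrm{id}$ together with the identity $\tr(\pp\pi\wedge\partial\pi)=|\pp\pi|^2\,\omega^n/n!$ to derive uniform $L^{p}$ estimates for $\nabla\pi_\epsilon$ via a Moser-type iteration applied to the elliptic equation above. Passing to the limit and bootstrapping shows that $\pi$ is smooth on an open set $U\subset M$ whose complement is contained in the zero locus of a finite collection of holomorphic functions, so that $\Sigma:=M\setminus U$ is a closed analytic subset of codimension at least $2$. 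Alternatively, one can use Popovici's approach, which constructs the subsheaf directly from local $\pp$-closed sections in the image of $\pi$ using $L^{2}$-$\pp$ theory, thereby bypassing the most delicate approximation steps.

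Finally, on $U$ the smooth projection $\pi$ satisfies $(\mathrm{id}-\pi)\pp\pi=0$, which is precisely the condition that $\mathrm{Image}(\pi)\subset E|_U$ is a holomorphic subbundle $S_0$ with $\pi=\pi^{H}_{S_0}$. To build the global sheaf, let $j\colon U\hookrightarrow M$ denote the inclusion and define $S\subset E$ as the kernel of the natural map $E\to j_{\ast}(E|_U/S_0)$; equivalently, $S$ is the subsheaf of germs of holomorphic sections of $E$ whose restriction to $U$ takes values in $S_0$. Because $\mathrm{codim}_{\mathbb{C}}\Sigma\ge 2$, standard Hartogs/Serre-type extension for coherent sheaves shows that $S$ is coherent, and by construction $\pi^{H}_{S}=\pi$ off $\Sigma$, hence almost everywhere.

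The main obstacle is clearly the middle step: bootstrapping from merely $W^{1,2}_{loc}$ regularity to smoothness off an analytic set of codimension at least $2$, using only the algebraic identities $\pi^{2}=\pi$ and $(\mathrm{id}-\pi)\pp\pi=0$. Without these nonlinear constraints the equation for $\pi$ is far too weak to force such regularity, and it is precisely the subtle interplay between the pointwise algebraic structure and the elliptic estimates — together with the analyticity of the exceptional set — that makes the theorem deep. I would rely on the techniques of \cite{uhlenbeck-yau} or \cite{popovici} here, as a self-contained treatment would take us well beyond the scope of this paper.
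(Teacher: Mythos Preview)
The paper does not give its own proof of this theorem: it is stated as a cited result of Uhlenbeck--Yau \cite{uhlenbeck-yau} (with an alternative reference to Popovici \cite{popovici}) and used as a black box. Your sketch is a reasonable outline of the strategy in those references --- deriving an elliptic equation for $\pi$ from the algebraic constraints, establishing regularity off a codimension-$2$ analytic set, and then extending the resulting subbundle to a coherent subsheaf --- but there is nothing in the paper itself to compare it against.
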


If $X$ is compact, $\deg_{\omega}(S)$ defined in (\ref{definition of degree of sheaf}) can be computed using the curvature form $F_{S,H}$. The following result is well-known, see \cite[Section 5.8]{kobayashi-book}. We include a simple proof using Theorem \ref{current lemma}.
\begin{proposition}
	Let $(E,H)$ be a hermitian holomorphic vector bundle over a compact K\"ahler manifold $(M,\omega)$ and $S$ be a coherent subsheaf of $E$. Then 
	\begin{equation}\label{degree of coherent sheaf}
		\deg_{\omega}(S)=\frac{\ii}{2\pi}\int_{M\backslash\Sigma} \tr(F_{S,H})\wedge\omega^{n-1},
	\end{equation}
	where $\deg_{\omega}(S)$ and $F_{S,H}$ are defined in (\ref{definition of degree of sheaf}) and (\ref{curvature form for subhseaf}) respectively.
\end{proposition}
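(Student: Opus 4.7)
The plan is to reduce the identity to a comparison of Chern forms on the line bundle $\det S=(\wedge^r S)^{**}$. Since any reflexive rank-one coherent sheaf on a smooth complex manifold is invertible, $\det S$ is a genuine holomorphic line bundle on $M$, and by the classical Chern--Weil theorem
\begin{equation*}
\deg_\omega(S)=\frac{\ii}{2\pi}\int_M \Theta_{\hat h}\wedge\omega^{n-1}
\end{equation*}
for any smooth Hermitian metric $\hat h$ on $\det S$, where $\Theta_{\hat h}$ is the associated Chern curvature. The goal is therefore to show that $\int_{M\backslash\Sigma}\tr F_{S,H}\wedge\omega^{n-1}$ equals $\int_M\Theta_{\hat h}\wedge\omega^{n-1}$.

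First I would establish finiteness of the left-hand integral using the decomposition $\tr F_{S,H} = \tr(\pi F_{E,H}) - \tr(\partial\pi\wedge\pp\pi)$ coming from (\ref{curvature form for subhseaf}). The first summand is pointwise bounded, since $|\pi|\le 1$ and $F_{E,H}$ is smooth on the compact $M$; the integral of the second against $\omega^{n-1}$ is controlled by the $W^{1,2}$-regularity of $\pi$ expressed in (\ref{L^2}), which is itself a direct application of Theorem \ref{current lemma} to the nonnegative closed $(1,1)$-current $\ii\tr(\partial\pi\wedge\pp\pi)$ on $M\backslash\Sigma$.

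Next I compare $\tr F_{S,H}$ with $\Theta_{\hat h}$. On $M\backslash\Sigma$, $\det(H|_S)$ is a Hermitian metric on $\det S|_{M\backslash\Sigma}$ whose Chern curvature is precisely $\tr F_{S,H}$, so writing $\det(H|_S)=\hat h\cdot e^{-\varphi}$ on $M\backslash\Sigma$ yields $\tr F_{S,H}-\Theta_{\hat h}=\partial\pp\varphi$. Working in a local trivialization of $\det S$ with local holomorphic generators $\sigma_1,\dots,\sigma_r$ of $S$, the function $\varphi$ differs by a smooth function from $\log|\sigma_1\wedge\cdots\wedge\sigma_r|_{H_{\wedge^r E}}^2 - \log|f|^2$, where $f$ is the holomorphic function giving the image of $\sigma_1\wedge\cdots\wedge\sigma_r$ in the chosen frame of $\det S$. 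Both terms are logarithms of squared absolute values of holomorphic sections, hence plurisubharmonic up to smooth pieces and in $L^p_{loc}(M)$ for every $p<\infty$.

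The main obstacle is then to carry out the integration by parts across the singular set $\Sigma$, since $\varphi$ is only locally $L^p$ and not bounded. I would introduce cut-offs $\chi_\epsilon$ equal to $1$ outside an $\epsilon$-tubular neighborhood of $\Sigma$ and to $0$ inside the $\epsilon/2$-tube, with $|dd^c\chi_\epsilon|=O(\epsilon^{-2})$. Stokes' theorem on the compact $M$ gives
\begin{equation*}
\int_M\chi_\epsilon\,\partial\pp\varphi\wedge\omega^{n-1}=\int_M \varphi\,\partial\pp\chi_\epsilon\wedge\omega^{n-1}.
\end{equation*}
The left-hand side converges to $\int_{M\backslash\Sigma}\partial\pp\varphi\wedge\omega^{n-1}$ by the finiteness above, while the right-hand side tends to zero: the support of $dd^c\chi_\epsilon$ has volume $O(\epsilon^4)$ (as $\Sigma$ has real codimension at least four), and H\"older's inequality with $p>2$ gives
$\int |\varphi\,dd^c\chi_\epsilon|\le C\epsilon^{-2}\cdot\epsilon^{4(1-1/p)}\to 0$.
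Combining with the Chern--Weil identity then yields the desired equality.
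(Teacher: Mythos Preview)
Your argument is correct and follows the same overall strategy as the paper: reduce to comparing the singular metric $\det(H|_S)$ on the line bundle $\det S$ with a smooth reference metric $\hat h$, and show that the $\partial\pp$-exact defect integrates to zero using cut-offs near $\Sigma$. The paper sets this up by writing $\Phi^*(\wedge^r H)=fK$ for a smooth metric $K$ and a smooth function $f\ge 0$ vanishing only along $\Sigma$, so that your $\varphi$ is essentially $-\log f$; the $L^p$-integrability you claim is immediate from this description.

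The one genuine difference is in the cut-off step. The paper uses the log--log construction $\chi_\epsilon=\rho_\epsilon(\log(-\log s))$, which yields a \emph{uniform} $L^2$ bound on $\Delta\chi_\epsilon$ and then pairs with $\log f\in L^2$ via Cauchy--Schwarz. You instead use the cruder cut-offs with $|dd^c\chi_\epsilon|=O(\epsilon^{-2})$ and exploit that the support has volume $O(\epsilon^4)$ (real codimension $\ge 4$), pairing with $\varphi\in L^p$ for some $p>2$ via H\"older. Your route is more elementary and avoids the log--log trick, at the cost of needing integrability slightly above $L^2$; the paper's route would still work if one only knew $\varphi\in L^2$. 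In the present situation both hypotheses hold (the singularity of $\varphi$ is logarithmic), so either argument closes the proof.

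One small point worth tightening: your local description of $\varphi$ in terms of generators $\sigma_1,\dots,\sigma_r$ and a holomorphic function $f$ is a bit delicate, since local frames of $S$ need not extend across $\Sigma$. It is cleaner (and this is what the paper does) to take a local frame $\tau$ of the line bundle $\det S$ and observe that $\Phi(\tau)$ is a holomorphic section of the vector bundle $\wedge^r E$ defined across $\Sigma$; then $\varphi=-\log|\Phi(\tau)|^2_{\wedge^r H}+\text{smooth}$, and $L^p$-integrability for all $p<\infty$ is immediate.
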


\begin{proof}
	Let $r$ denote the rank of $S$.  
Since $S$ is a subsheaf of $E$, there is a natural sheaf homomorphism 
\begin{equation*}
	\Phi:(\wedge^rS)^{**}\longrightarrow (\wedge^r E)^{**}=\wedge^r E.
\end{equation*}
Note that $\Phi$ is only injective on $M\backslash \Sigma$ in general. Let $\wedge^rH$ denote the metric on $\wedge^r E$ induced from $H$, then $\Phi^*(\wedge^rH)$ defines a singular hermitian metric on $(\wedge^rS)^{**}$ which is smooth outside $\Sigma$ and whose curvature form is equal to $\tr(F_{S,H})$. Since $\Phi$ is a holomorphic bundle map, by choosing a local holomorphic basis of $(\wedge^rS)^{**}$ and $\wedge^r E$, it is easy to show that $\Phi^*(\wedge^rH)=fK$, where $K$ is a smooth hermitian on $(\wedge^rS)^{**}$, the function $f$ is positive smooth outside $\Sigma$ and converge to 0 polynomially along $\Sigma$.

Then by Theorem \ref{current lemma}, it suffices to prove the following: for every smooth positive function $f$ on $M\backslash \Sigma$ satisfying $\Delta \log f\in L^1$, and $|f|=O(\operatorname {dist}(\cdot, \Sigma)^{k})$ for some $k$ one has
\begin{equation}\label{another average of laplacian is 0}
	\int_M\Delta \log f\ \omega^n=0.
\end{equation}
Note that $|\log f|\in L^2$, then (\ref{another average of laplacian is 0}) follows from the Cauchy-Schwarz inequality and existence of good cut-off functions. More precisely, since $\Sigma$ has real codimension at least 4, it is well-known that there exists a sequence of cut-off functions $\chi_{\epsilon}$ such that $1-\chi_{\epsilon}$ is supported in the $\epsilon$-neighborhood of $\Sigma$ and we have uniform $L^2$ bound on $\Delta \chi_{\epsilon}$. 

We briefly describe a construction of these cut-off functions. Let $s$ be a regularized distance function to $\Sigma$ in the sense that $s:M\rightarrow \mathbb R_{\geq0}$ is smooth and satisfies that there exist positive constants $C_k$ such that
\begin{equation*}
	C_0^{-1}\operatorname{dist}(x,\Sigma)\leq s(x)\leq C_0\operatorname {dist}(x,\Sigma) \text{ and } |\nabla^{k}s|\leq C_k\operatorname{dist}(x,\Sigma)^{1-k} \text{ for all } k\geq0.
\end{equation*}The existence of such a regularized distance function can be derived from \cite[Theorem 2 on page 171]{Stein}. After a rescaling, we may assume $s<1$ on $M$.
 For every $\epsilon>0$, let $\rho_{\epsilon}$ be a smooth function which is equal to one on the interval $(-\infty, \varepsilon^{-1})$ and zero on $(2+\varepsilon^{-1}, \infty)$. Moreover we can have $|\rho^{\prime}_{\epsilon}|+|\rho^{\prime\prime}_{\epsilon}|\leq 10$. Then we define
\begin{equation*}
	\chi_{\epsilon}=\rho_{\epsilon}(\log(-\log s)),
\end{equation*}
and we can directly check they satisfy the desired properties. 
\end{proof}

Motivated by the above result, Simpson \cite{simpson} uses the right hand side of (\ref{degree of coherent sheaf}) to define an analytic $\omega$-degree of a coherent subsheaf on a noncompact K\"ahler manifold.  Typically one needs to assume  $|\Lambda_{\omega}F_H|\in L^1$ to ensure the first term of (\ref{curvature form for subhseaf}) is integrable. Then the degree of a coherent subsheaf is either a finite number or $-\infty$ depending on whether $|\pp\pi|\in L^2$. In general, this analytic degree depends on the choice of the background metric $H$. And a key observation in this paper is that when $E$ has a compactification and $H$ is conformal to a smooth extendable hermitian metric, this analytic degree does have an algebraic interpretation, see Lemma \ref{degree coincide} and Lemma \ref{key observation}.

\subsection{Dirichlet problem}

	We have the following important theorem of Donaldson:
\begin{theorem} [{\cite{donaldsonboudary}}]\label{donaldson dirichlet}
Given a hermitian holomorphic vector bundle $(E,H_0)$ over $(Z,\omega)$ which is a compact K\"ahler manifold  with boundary, there is a unique hermitian metric $H$ on $E$ such that
\begin{itemize}
	\item [(1)] $H|_{\partial Z}=H_0|_{\partial Z}$,
	\item [(2)]$\ii\Lambda_{\omega}F_H=0$.
\end{itemize}
\end{theorem}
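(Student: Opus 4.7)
The plan is to handle uniqueness by a maximum-principle argument built on Lemma \ref{basic differential inequlaties}, and existence by the continuity method with the same pointwise inequality furnishing the key $C^0$ estimate.

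For uniqueness, suppose $H_1, H_2$ both satisfy $\mathrm{i}\Lambda_\omega F_{H_i}=0$, and set $h=H_1^{-1}H_2$. Since $|\Lambda_\omega F_{H_1}|_{H_1}=|\Lambda_\omega F_{H_2}|_{H_2}=0$, Lemma \ref{basic differential inequlaties}(3) gives $\Delta\log\tr(h)\le 0$, so $\log\tr(h)$ is subharmonic on $Z$ in the sign convention of the paper. Since $h=\operatorname{id}$ on $\partial Z$, the maximum principle yields $\tr(h)\le \rank(E)$ throughout $Z$; applying the same argument with the roles of $H_1,H_2$ reversed gives $\tr(h^{-1})\le\rank(E)$. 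Writing the positive eigenvalues of $h$ as $\lambda_1,\ldots,\lambda_r$, Cauchy--Schwarz yields
\[
r^2=\Bigl(\sum_i \sqrt{\lambda_i}\cdot\sqrt{1/\lambda_i}\Bigr)^{\!2}\le \tr(h)\tr(h^{-1})\le r^2,
\]
so equality holds everywhere; this forces all $\lambda_i$ to be equal and, combined with $\tr(h)\le r$ and $\tr(h^{-1})\le r$, forces $\lambda_i\equiv 1$, i.e.\ $H_1=H_2$.

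For existence, I would run the continuity method on the family
\[
\mathrm{i}\Lambda_\omega F_{H_t}=(1-t)\,\mathrm{i}\Lambda_\omega F_{H_0},\qquad H_t|_{\partial Z}=H_0|_{\partial Z},\quad t\in[0,1],
\]
with $H_0$ itself solving the equation at $t=0$. Writing $H_t=H_0 h_t$ and using Lemma \ref{basic differential inequlaties}(2), this becomes a quasilinear elliptic PDE for the positive self-adjoint endomorphism $h_t$ with Dirichlet data $h_t|_{\partial Z}=\operatorname{id}$. Openness at any admissible $t$ is standard: the linearization at $h_t$ is a strictly elliptic second-order operator on $\operatorname{End}(E)$, and the Dirichlet problem with vanishing boundary values is uniquely solvable by standard elliptic theory. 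The main content is closedness, which requires uniform $C^k$ estimates on $h_t$.

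The decisive a priori estimate is the $C^0$ bound. Applying Lemma \ref{basic differential inequlaties}(3) to $(H_0,H_t)$ gives
\[
\Delta\log\tr(h_t)\le 2\bigl(|\Lambda_\omega F_{H_t}|_{H_t}+|\Lambda_\omega F_{H_0}|_{H_0}\bigr),
\]
and the right-hand side is uniformly bounded by a constant depending only on $H_0$ since $\mathrm{i}\Lambda_\omega F_{H_t}=(1-t)\mathrm{i}\Lambda_\omega F_{H_0}$. A barrier function of the form $C\rho$, with $\rho$ a smooth boundary-defining function for $\partial Z$, then yields $\|\log\tr(h_t)\|_{L^\infty}\le C$; the symmetric argument for $h_t^{-1}$ gives a uniform two-sided bound on the eigenvalues of $h_t$. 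Once $h_t$ is controlled in $C^0$, the equation for $h_t$ is uniformly elliptic with smooth coefficients, and Schauder/Evans--Krylov theory for quasilinear elliptic systems promotes this to uniform $C^{k,\alpha}$ estimates for all $k$.

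The hard part will be the boundary $C^{1}$ and $C^{2}$ estimates needed to run the higher-order regularity. Interior regularity is standard once the $C^0$ bound is known, but extracting $C^{1,\alpha}$ and $C^{2,\alpha}$ control up to $\partial Z$ for this nonlinear system, with Dirichlet data prescribed only on $\partial Z$, requires a careful construction of suitable barriers adapted to the fixed boundary metric; this is the technical core of Donaldson's original treatment in \cite{donaldsonboudary}, and I would follow that strategy directly.
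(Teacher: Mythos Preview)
The paper does not give its own proof of this theorem; it is quoted as a result of Donaldson \cite{donaldsonboudary} and used as a black box. So there is nothing in the paper to compare your argument against.

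On the merits of your outline: the uniqueness argument via Lemma \ref{basic differential inequlaties}(3) and the maximum principle is correct and complete. For existence, the continuity method you sketch is a valid route and your identification of the $C^0$ bound as the decisive a priori estimate is right. One small correction: a barrier of the form $C\rho$ only controls $\log\tr(h_t)$ near $\partial Z$; to get a global $L^\infty$ bound you should solve the linear Dirichlet problem $\Delta v = C$ on $Z$ with $v|_{\partial Z}=0$ and compare $\log\tr(h_t)$ with $v+\log r$.

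It is worth noting that Donaldson's original proof in \cite{donaldsonboudary} does not use the continuity method but rather the heat flow: one runs the nonlinear parabolic equation $h^{-1}\partial_t h = -2\,\mathrm{i}\Lambda_\omega F_{H_0 h}$ with fixed Dirichlet data $h|_{\partial Z}=\mathrm{id}$, proves long-time existence, and shows convergence as $t\to\infty$. The core analytic input is the same subharmonicity inequality from Lemma \ref{basic differential inequlaties}, now applied along the flow to get a uniform $C^0$ bound; the higher-order estimates are then standard parabolic regularity, which is arguably cleaner up to the boundary than the elliptic system boundary regularity you correctly flag as the delicate point in the continuity approach. Either route works, and your honest acknowledgment that the boundary $C^{1,\alpha}$/$C^{2,\alpha}$ estimates are the technical core is appropriate.
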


As observed in \cite{mochizuki}, one can do conformal changes to $H$ to fix the induced metric on $\det E$ and still have it to be a projectively Hermitian-Yang-Mills metric. 
\begin{proposition}[{\cite{mochizuki}}]
	Given a hermitian holomorphic vector bundle $(E,H_0)$ over  $(Z,\omega)$ which is a compact K\"ahler manifold  with boundary, there is a unique hermitian metric $H$ on $E$ such that
\begin{itemize}
	\item [(1)] $H|_{\partial Z}=H_0|_{\partial Z}$ and $\det H=\det H_0$,
	\item [(2)]$\ii\Lambda_{\omega}F_H^{\perp}=0$,
\end{itemize}

\end{proposition}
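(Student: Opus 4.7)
The plan is to reduce the statement to Donaldson's Dirichlet theorem (the previous Theorem \ref{donaldson dirichlet}) by performing an explicit conformal change to correct the determinant, and then to establish uniqueness via the differential inequality in Lemma \ref{basic differential inequlaties}(3) together with the maximum principle on the compact manifold $Z$ with boundary.

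First, I would apply Theorem \ref{donaldson dirichlet} to obtain a hermitian metric $H'$ on $E$ with $H'|_{\partial Z}=H_0|_{\partial Z}$ and $\ii\Lambda_{\omega}F_{H'}=0$. Writing $r=\operatorname{rank}(E)$, I would then seek $H=e^{f}H'$ for a smooth real function $f$ on $Z$ so that $\det H=\det H_0$, which forces
\begin{equation*}
f=\tfrac{1}{r}\log\!\Big(\det H_0/\det H'\Big).
\end{equation*}
Since $H'$ and $H_0$ coincide on $\partial Z$, so do their determinants, hence $f|_{\partial Z}=0$ and therefore $H|_{\partial Z}=H_0|_{\partial Z}$. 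A conformal change of hermitian metric only shifts the curvature by a scalar $(1,1)$-form, precisely $F_{H}=F_{H'}+\pp\partial f\cdot \mathrm{id}_E$, so the trace-free curvature is conformally invariant: $F_H^{\perp}=F_{H'}^{\perp}$. Since $\ii\Lambda_{\omega}F_{H'}=0$ in particular kills the trace-free part, we conclude $\ii\Lambda_{\omega}F_H^{\perp}=0$, verifying both conditions.

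For uniqueness, suppose $H_1,H_2$ both satisfy (1) and (2) and write $H_2=H_1 h$ with $h$ self-adjoint positive with respect to $H_1$. The determinant condition $\det H_1=\det H_2$ gives $\det h\equiv 1$, and the boundary condition forces $h|_{\partial Z}=\mathrm{id}_E$. Applying Lemma \ref{basic differential inequlaties}(3) in its trace-free form (as noted in the last line of that lemma, valid precisely because $\det h=1$), I get
\begin{equation*}
\Delta \log \tr(h)\leq 2\bigl(|\Lambda_{\omega}F_{H_1}^{\perp}|_{H_1}+|\Lambda_{\omega}F_{H_2}^{\perp}|_{H_2}\bigr)=0.
\end{equation*}
Thus $\log\tr(h)$ is subharmonic on $Z$ (with the sign convention $\Delta=\ii\Lambda\pp\partial$, this means $-\Delta\log\tr(h)\geq 0$), equals $\log r$ on $\partial Z$, so the maximum principle yields $\tr(h)\leq r$ on $Z$. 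Combined with $\det h=1$ and the arithmetic-geometric mean inequality applied to the positive eigenvalues of $h$, one forces $\tr(h)\equiv r$ with equality in AM--GM, which gives $h\equiv\mathrm{id}_E$ and hence $H_1=H_2$.

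The main obstacle is essentially bookkeeping: one must verify carefully that the conformal shift really leaves the trace-free Laplacian condition untouched and that the inequality in Lemma \ref{basic differential inequlaties}(3) indeed continues to hold with trace-free curvatures on the right-hand side, which is where the hypothesis $\det h=1$ enters. Once these are in place, the argument is a clean combination of Donaldson's existence result and a maximum principle, with no PDE work beyond what is already contained in the theorems cited.
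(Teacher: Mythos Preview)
Your proof is correct and follows exactly the approach the paper indicates: the paper does not give a detailed proof of this proposition but merely cites Mochizuki and remarks that ``one can do conformal changes to $H$ to fix the induced metric on $\det E$ and still have it to be a projectively Hermitian-Yang-Mills metric,'' which is precisely your existence argument. Your uniqueness argument via Lemma~\ref{basic differential inequlaties}(3), the maximum principle, and AM--GM is the standard one and is fully consistent with the paper's setup.
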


\subsection{Donaldson functional for manifolds with boundary}\label{definition of new endmorphisms} Next we recall Simpson's construction \cite{simpson} for Donaldson functional. We follow the exposition in \cite[Subsection 2.5]{mochizuki} and focus on compact K\"ahler manifolds with boundary.

Let $(Z,\omega)$ be a compact K\"ahler manifold with boundary, $(E,H_0)$ be a hermitian holomorphic vector bundle. Let $b$ be a smooth section of $\operatorname{End}(E)$ which is self-adjoint with respect to $H_0$. Then for any smooth function $f:\mathbb R\rightarrow \mathbb R$ and $\Phi:\mathbb R\times\mathbb R\rightarrow \mathbb R$ respectively, we can define 
\begin{equation*}
	\text{$f(b)\in C^{\infty}(\operatorname{End}( E))$ and $\Phi(b)\in C^{\infty}(\operatorname{End}(\operatorname{End}(E)))$ }
\end{equation*}as follows: at each point $p\in Z$, choose an orthonormal basis $\{e_i\}$ of $E$ such that $b(e_i)=\lambda_ie_i$. Let   $\{e_i^{\vee}\}$ denote its dual basis, then set

\begin{equation*}
	\text{$f(b)(e_i)=f(\lambda_i)e_i$ and 
$\Phi(b)(e_i^{\vee}\otimes e_j)=\Phi(\lambda_i,\lambda_j)e_i^{\vee}\otimes e_j$
 }
\end{equation*}

Recall that for any two hermitian metrics $H_1$ and $H_2$, there is a smooth section $s$ of $\operatorname{End}( E)$, which is self-adjoint with respect to both $H_1$ and $H_2$ such that $H_2=H_1e^s$ and $\det H=\det H_0$ if and only if $\tr(s)=0$. Let $\mathcal P_{H_0}$ denote the space of hermitian metrics  $H$ of $E$ such that 
\begin{equation*}
	\text{$\det H=\det H_0$ and $H|_{\partial Z}=H_0|_{\partial Z}$.}
\end{equation*}
Let $\Psi(\lambda_1,\lambda_2)$ denote the smooth function 
\begin{equation}\label{definition of Phi}
	\Psi(\lambda_1,\lambda_2)=\left\{
	\begin{aligned}
		&\frac{e^{\lambda_{2}-\lambda_{1}}-\left(\lambda_{2}-\lambda_{1}\right)-1}{\left(\lambda_{2}-\lambda_{1}\right)^{2}}\ \  \text{if $\lambda_1\neq \lambda_2$} \\
		&\frac{1}{2}\qquad \qquad\qquad \qquad\qquad\text{if $\lambda_1= \lambda_2$}.
	\end{aligned}
	\right.
\end{equation}
Then put

\begin{equation*}
	\mathcal M_{\omega} (H_1,H_2)=\ii\int_Z\tr(s\Lambda_{\omega}F_{H_1})\omega^n+\int_Z \left<\Psi(s)(\pp s),\pp s\right>_{H_1}\omega^n.
\end{equation*}

Mochizuki proved the following important result
\begin{theorem}[{\cite{mochizuki}}]\label{phym metrics minimizing donaldson functional}
	If $H\in \mathcal P_{H_0}$ is an $\omega$-PHYM metic, then $\mathcal M_{\omega}(H_0,H)\leq 0$.
\end{theorem}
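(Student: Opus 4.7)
The plan is to exploit convexity of the Donaldson functional along the natural straight-line path from $H_0$ to $H$ and to use the PHYM equation to kill the first variation at the endpoint $t=1$. Write $H = H_0 e^{s}$ with $s$ an $H_0$-self-adjoint endomorphism of $E$; because $H \in \mathcal P_{H_0}$ one has $\tr(s) = 0$ and $s|_{\partial Z} = 0$. For $t \in [0,1]$, set $H_t := H_0 e^{ts}$, which again lies in $\mathcal P_{H_0}$, and define
\[
\phi(t) := \mathcal M_\omega(H_0, H_t),
\]
so that $\phi(0) = 0$ and the goal is to show $\phi(1) \leq 0$.

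First, I would verify the cocycle identity $\mathcal M_\omega(H_0, H_{t_0}) + \mathcal M_\omega(H_{t_0}, H_t) = \mathcal M_\omega(H_0, H_t)$ for this family. Because $s$ commutes with $e^{us}$ for every $u$, the relation $H_t = H_{t_0}\, e^{(t - t_0) s}$ reduces this to an algebraic check against the explicit formula for $\mathcal M_\omega$. Differentiating in $t$ at $t = t_0$, the second (quadratic in $(t - t_0)$) term of $\mathcal M_\omega(H_{t_0}, H_t)$ contributes only at order $(t - t_0)$ and vanishes at $t_0$, leaving
\[
\phi'(t_0) \,=\, \ii \int_Z \tr\bigl(s\, \Lambda_\omega F_{H_{t_0}}\bigr)\, \omega^n.
\]
At $t = 1$, the PHYM condition gives $\Lambda_\omega F_H = \lambda\, \mathrm{id}_E$ for a constant $\lambda$, and combining with $\tr(s) = 0$ yields $\phi'(1) = 0$.

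The main step, and the main obstacle, is to establish convexity $\phi''(t) \geq 0$. Differentiating $\phi'$ and using the infinitesimal variation of the mean curvature $\ii\Lambda_\omega F_{H_t}$ along the path (the linearized version of Lemma \ref{basic differential inequlaties}(2), with velocity $\dot H_t H_t^{-1} = s$) produces a term $\Delta_{H_t}(s)$ together with commutator contributions. Integrating by parts against $s$, which is legitimate without boundary contributions since $s|_{\partial Z} = 0$, the commutator and Laplacian terms combine into the standard non-negative expression
\[
\phi''(t) \,=\, 2 \int_Z |\pp s|^2_{H_t}\, \omega^n \,\geq\, 0.
\]
The delicate part is the algebra: tracking how $\Psi(ts)$ and $\pp s$ interact in the second summand of $\mathcal M_\omega$ as $t$ varies, and confirming that the cross terms collapse to exactly the non-negative square, as in the classical Donaldson--Simpson convexity computation transported to the manifold-with-boundary setting.

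Finally, a convex $\phi$ on $[0,1]$ with $\phi'(1) = 0$ has non-positive derivative throughout $[0,1]$, hence is non-increasing there, so $\phi(1) \leq \phi(0) = 0$, as required.
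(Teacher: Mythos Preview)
The paper does not give its own proof of this theorem; it is quoted from \cite{mochizuki}. Your outline is essentially the standard Donaldson--Simpson convexity argument, adapted (as Mochizuki does) to the manifold-with-boundary setting, and it is correct in substance.

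One small correction: when you invoke the PHYM condition at $t=1$, you write ``$\Lambda_\omega F_H = \lambda\,\mathrm{id}_E$ for a constant $\lambda$''. In this paper PHYM only means the trace-free part of $\Lambda_\omega F_H$ vanishes, i.e.\ $\Lambda_\omega F_H = \dfrac{\tr(\Lambda_\omega F_H)}{\rank E}\,\mathrm{id}_E$, where the scalar coefficient is a function, not necessarily a constant. This does not affect your conclusion: since $\tr(s)=0$, one still has $\tr\bigl(s\,\Lambda_\omega F_H\bigr) = \dfrac{\tr(\Lambda_\omega F_H)}{\rank E}\,\tr(s) = 0$ pointwise, hence $\phi'(1)=0$ as claimed.

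Otherwise the structure is right: the cocycle identity reduces the first variation to $\ii\!\int_Z \tr(s\,\Lambda_\omega F_{H_t})\,\omega^n$; the boundary condition $s|_{\partial Z}=0$ justifies the integration by parts that converts the second variation into a non-negative $L^2$ norm of $\bar\partial s$ (or equivalently $\partial_{H_t} s$, since $s$ is $H_t$-self-adjoint for every $t$); and convexity together with $\phi'(1)=0$ and $\phi(0)=0$ forces $\phi(1)\le 0$.
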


\subsection{Bando-Siu's interior estimate}
The following result shows that to get a local uniform bound for a sequence of PHYM metrics, it suffices to have a uniform $C^0$-bound.
\begin{theorem}[{\cite{bando-siu}\cite{jacob-walpuski}}]\label{bando-siu interior estimate}
	Let $ B_2(p)\subseteq (M,\omega)$ be a geodesic ball of radius 2 contained in a K\"ahler manifold such that $\ols{B_2(p)}$ is compact. Let $(E,H_0)$ be a hermitian holomorphic vector bundle over $B_2(p)$. Suppose $H=H_0e^s$ is an $\omega$-PHYM metric with $\tr(s)=0$, then for any $k\in \mathbb N$ and $p\in(1,\infty)$ there exists a smooth function $\mathcal F_{k,p}$ which vanishes at the origin and depends only on the geometry of $B_2(p)$ such that 
	\begin{equation*}
		\left\Vert\nabla_{H_0}^{k+2}s\right\Vert_{L^p(B_1(p))} \leq \mathcal F_{k,p}\left(\left\Vert s\right\Vert_{L^{\infty}}+\sum_{i=0}^{k}\left\Vert\nabla_{H_0}^{i}F_{H_0}\right \Vert_{L^{\infty}(B_2(p))} \right).
	\end{equation*}
\end{theorem}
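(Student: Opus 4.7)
I would follow the Bando--Siu scheme as streamlined by Jacob--Walpuski: read off the semilinear elliptic PDE satisfied by $s$, run an $L^2$ energy estimate, bootstrap to an $L^\infty$ bound on $\nabla s$ via a Moser/Bochner argument, and then pass to higher regularity by standard linear elliptic theory. Throughout, the fact that $\tr s = 0$ (equivalently $\det h = 1$ where $h = e^s$) together with the PHYM condition means that only the trace-free curvature $F^\perp$ appears, which is exactly what makes the vanishing of $\mathcal F_{k,p}$ at the origin visible.

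\textbf{The PDE.} Lemma \ref{basic differential inequlaties}(2) applied with $K = H_0$ and $H = H_0 h$, together with $\Lambda_\omega F^\perp_{H} = 0$ and $\det h = 1$, gives
\begin{equation*}
\Delta_{H_0} h \;=\; -\,i\, h\, \Lambda_\omega F^\perp_{H_0} \;+\; i\,\Lambda_\omega\, \pp h \cdot h^{-1} \partial_{H_0} h.
\end{equation*}
Since $\|s\|_{L^\infty}$ is controlled, both $h$ and $h^{-1}$ are bounded in operator norm, and the PDE has the schematic form $|\Delta h| \le C|F^\perp_{H_0}| + C|\nabla s|^2$ with $C$ depending only on $\|s\|_{L^\infty}$.

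\textbf{Energy and $\nabla s \in L^\infty$.} Pair the PDE above with $s$, multiply by a cutoff $\chi$ equal to $1$ on $B_{7/4}(p)$ and supported in $B_2(p)$, and integrate by parts. Because $s$ is self-adjoint and bounded, the trace of $\pp h \cdot h^{-1} \partial h \cdot h^{-1}$ dominates a positive multiple of $|\nabla s|^2_{H_0}$, and one obtains
\begin{equation*}
\int_{B_{7/4}(p)} |\nabla s|^2 \;\le\; C\bigl(\|s\|_{L^\infty},\,\|F_{H_0}\|_{L^\infty}\bigr).
\end{equation*}
Next, differentiating the PDE and applying a Bochner-type computation to $u = |\nabla s|^2_{H_0}$ produces a differential inequality of the form $\Delta u \le C_1 + C_2 u^2$ modulo terms controlled by $|\nabla F_{H_0}|$ and lower. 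Moser iteration on the subsolution $u$, seeded by the $L^2$ bound above, then yields $\|\nabla s\|_{L^\infty(B_{3/2}(p))} \le \mathcal F_0\bigl(\|s\|_{L^\infty}, \|F_{H_0}\|_{L^\infty} + \|\nabla F_{H_0}\|_{L^\infty}\bigr)$.

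\textbf{Bootstrap.} With $\nabla s$ bounded in $L^\infty(B_{3/2}(p))$, the right-hand side of the PDE is bounded in $L^\infty$, so Calder\'on--Zygmund gives $h \in W^{2,q}(B_{4/3}(p))$ for every $q \in (1,\infty)$ with a quantitative bound. Differentiating the PDE $k$ times produces an equation of schematic form
\begin{equation*}
\Delta(\nabla^k h) \;=\; P_k\bigl(\nabla^{\le k+1} h\bigr) \;+\; Q_k\bigl(\nabla^{\le k} F_{H_0}\bigr),
\end{equation*}
where $P_k$ and $Q_k$ are polynomial in their arguments (and $P_k$ vanishes when $s \equiv 0$). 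An induction on $k$, combined with Sobolev embedding and $W^{2,q}$ estimates on a nested sequence of balls terminating at $B_1(p)$, produces the desired $W^{k+2,p}$ bound; the polynomial nature of $P_k, Q_k$ forces $\mathcal F_{k,p}(0) = 0$. \textbf{Main obstacle.} The crux is the gradient $L^\infty$ estimate in the middle step: extracting it from the $L^2$ energy requires the correct Bochner identity for the matrix-valued $|\nabla s|^2_{H_0}$, and the nonlinear term $i\Lambda \pp h \cdot h^{-1} \partial h$ must be handled so that one does not lose a derivative in the iteration. Once this is in place, the rest is a routine elliptic bootstrap.
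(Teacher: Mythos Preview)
The paper does not prove this theorem: it is quoted as a black-box result from \cite{bando-siu} and \cite{jacob-walpuski}, so there is no in-paper proof to compare against. Your outline follows the standard Bando--Siu/Jacob--Walpuski scheme (semilinear PDE for $h$, $L^2$ energy, Moser-type gradient bound, then linear bootstrap), which is exactly the approach in those references, and the sketch is essentially correct.
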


\section{Existence of a good initial metric}\label{existence of good initial metric}
In this section, we continue to use notations in Section \ref{assumption on the asymptotic behaviour} and always assume the K\"ahler metric on $X$ satisfies the \textit{Assumption 1}. 
We begin by working on the model space $(\mathcal C,\omega_{\mathcal C})$, where $\omega_{\mathcal C}=dd^c F(t)$ for some potential $F(t)\in \mathcal H$.
Using the explicit expression of $\omega_{\mathcal C}$ in (\ref{explicit expression of kahler form}), it is easily to show that
\begin{lemma} Let $E$ be a holomorphic vector bundle on $D$ and $p^*(E)$ be its pull back to $\mathcal C$. Suppose $H_D$ is a metric on $E$ satisfying $\ii\Lambda_{\omega_D}F_{H_D}=\lambda \mathrm{id}_E$ for some constant $\lambda$, then $H=e^{-\frac{\lambda}{n-1} \log |\xi|^2_{h_D}}$$p^*(H_D)$ defines a metric on $p^*(E)$ satisfying $\ii\Lambda_{\omega_{\mathcal C}}F_H=0$.
\end{lemma}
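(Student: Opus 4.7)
The plan is a direct computation of $\ii\Lambda_{\omega_{\mathcal C}}F_H$. Since the conformal factor $e^{-\frac{\lambda}{n-1}\log|\xi|^2_{h_D}}=e^{\frac{\lambda}{n-1}t}$ is a scalar, the standard conformal-change formula gives
\begin{equation*}
F_H = p^*F_{H_D} - \tfrac{\lambda}{n-1}\partial\pp t\cdot \mathrm{id}_E,
\qquad\text{hence}\qquad
\ii F_H = \ii p^*F_{H_D} - \tfrac{\lambda}{n-1}\,\omega_D\cdot \mathrm{id}_E,
\end{equation*}
where I have used $\ii\partial\pp t=\omega_D$ on $\mathcal C$.

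Next I would contract against $\omega_{\mathcal C}$. From (\ref{explicit expression of kahler form}) and its coordinate form (\ref{model metric expansion}), $\omega_{\mathcal C}$ splits as the sum of a horizontal piece $F'(t)\omega_D$ and a vertical piece $F''(t)\,\ii\partial t\wedge\pp t$, and these two pieces are mutually orthogonal in the frame $(dz_i,\eta)$ with $\eta=\tfrac{dw}{w}-\psi_i dz_i$. Since both $\omega_D$ and $p^*F_{H_D}$ are purely horizontal, $\Lambda_{\omega_{\mathcal C}}$ acts on them by rescaling the $\omega_D$-contraction by the factor $F'(t)^{-1}$. Thus
\begin{equation*}
\Lambda_{\omega_{\mathcal C}}\omega_D=\frac{n-1}{F'(t)},\qquad \ii\Lambda_{\omega_{\mathcal C}}\bigl(p^*F_{H_D}\bigr)=\frac{1}{F'(t)}\,\ii\Lambda_{\omega_D}F_{H_D}=\frac{\lambda}{F'(t)}\,\mathrm{id}_E,
\end{equation*}
using the hypothesis $\ii\Lambda_{\omega_D}F_{H_D}=\lambda\,\mathrm{id}_E$. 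Combining with the previous display,
\begin{equation*}
\ii\Lambda_{\omega_{\mathcal C}}F_H \;=\; \frac{\lambda}{F'(t)}\,\mathrm{id}_E \;-\; \frac{\lambda}{n-1}\cdot\frac{n-1}{F'(t)}\,\mathrm{id}_E \;=\; 0,
\end{equation*}
which is the claim.

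The only step that requires any care is the identity that $\Lambda_{\omega_{\mathcal C}}$ restricted to purely horizontal $(1,1)$-forms equals $F'(t)^{-1}\Lambda_{\omega_D}$. I would verify this pointwise by choosing local coordinates at an arbitrary point with $\psi_{i\bar j}=\delta_{ij}$ and $\psi_i=0$; then by (\ref{model metric expansion}), $\omega_{\mathcal C}$ is diagonal at that point with horizontal eigenvalues $F'(t)$ and a single vertical eigenvalue $F''(t)/|w|^2$, so the contraction of any horizontal form is immediate. No real obstacle is expected here: the content of the lemma is simply that the conformal factor $e^{\frac{\lambda}{n-1}t}$ has been chosen with the unique exponent that makes the extra trace term from the conformal change cancel the horizontal trace coming from $F_{H_D}$.
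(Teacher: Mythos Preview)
Your argument is correct and is exactly what the paper has in mind: the paper does not spell out a proof but simply says the lemma follows easily from the explicit expression of $\omega_{\mathcal C}$ in (\ref{explicit expression of kahler form}), and your computation is precisely that verification. The only content is the conformal-change identity for the curvature together with the observation that on horizontal $(1,1)$-forms $\Lambda_{\omega_{\mathcal C}}=F'(t)^{-1}\Lambda_{\omega_D}$, both of which you handle correctly.
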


\begin{remark}\label{n=2 initial metric}
	If $n=2$, the metric $H$ actually is a flat metric on $p^*(E)$.
\end{remark}

 Let $\olsi E$ be a holomorphic vector bundle on $\mathcal D$. We still use $p$ to denote the projection map from $\mathcal D$ to $D$. Then we can compare the holomorphic structure on $\olsi E$ and $p^*(\olsi E|_D)$ as follows. In a neighborhood of $D$, which we may assume to be $\mathcal C$, we fix a bundle map $\Phi:\olsi E\rightarrow p^*(\olsi E|_D)$ such that $\Phi|_D$ is the canonical identity map and $\Phi$ is an isomorphism as maps between complex vector bundles. Then $\Phi$ pulls back the holomorphic structure on $p^*(\olsi E|_D)$ to $\olsi E$. Now we have two holomorphic structures on $\olsi E$ and denote them by $\pp_1$ and $\pp_2$. Then the difference $$\beta=\pp_2-\pp_1$$ is a smooth section of $\mathcal A^{0,1}(\operatorname{End}( \olsi E))$ and is 0 when restricted to $D$. Locally near a point in $D$, choose holomorphic coordinates $\{z_1,\cdots,z_{n-1},w\}$ such that $D=\{w=0\}$. Using these coordinates, $\beta$ can be written as $f_i\dd \bar z^i+h\dd \bar w$ where $f_i$ and $h$ are smooth sections of $\operatorname{End}( \olsi E)$ and $f_i|_{w=0}=0$.

Now suppose we have a Hermitian metric $H$ on $p^*(\olsi E|_D)|_{\mathcal C}$, then via $\Phi$ we view it as a metric on $\olsi E|_{\mathcal C}$. Let $\partial_i$ denote the $(1,0)$ part of the Chern connection determined by $\pp_i$ and $H$. Then one can check that $$\mu=\partial_2-\partial_1=-\beta^{*_H},$$ where $\beta^{*_H}$ denote the smooth section of $\mathcal A^{1,0}(\operatorname{End}( \olsi E))$ obtained from $\beta$ by taking the metric adjoint for the $\operatorname{End}( \olsi E)$ part and taking conjugate for the 1-form part. Locally $\mu=f_i^{*_H}\dd z^i+h^{*_H}\dd w$. Since $H$ is only defined over $\mathcal C$, a priori $f^{*_H}$ and $h^{*_H}$ are only defined on $\mathcal C$. Note that the operator $*_H$ is conformally invariant with respect to the choice of metric $H$, so if $H= e^f\olsi H$ for some function $f\in C^{\infty}(\mathcal C)$ and metric $\olsi H$ on $\olsi E$, $f_i^{*_H}$ and $h^{*_H}$ are smooth sections over $\mathcal D$. Then we can compute the difference of curvatures for $(\pp_1,H)$ and $(\pp_2, H)$:
\begin{equation}\label{difference of curvature}
F_{\pp_2,H}=\pp_2\partial_2+\partial_2\pp_2=F_{\pp_1,H}+\partial_1\beta+\pp_1\mu+[\beta,\mu],
\end{equation}
where we abuse notation to use the same symbol $\partial$ and $\pp$ to denote the induced connection on $\operatorname{End}( \olsi E)$. Again note that the induced metric (and hence the Chern connection) on $\operatorname{End}( \olsi E)$ is conformally invariant for  metrics $H$ on $\olsi E$. Therefore $F_{\pp_2,H}-F_{\pp_1,H}$ is a smooth $\operatorname{End}( \olsi E)$ valued $(1,1)$-form over $\mathcal D$ and $i^*_D(F_{\pp_2,H}-F_{\pp_1,H})=0$. Then by Proposition \ref{basic geometric property}, we obtain there exists a $\delta>0$,
	\begin{equation*}
		\left|F_{\pp_2,H}-F_{\pp_1,H}\right|_{\omega_{\mathcal C},H}=O(e^{-\delta t}).
	\end{equation*}
Combining this and the previous lemma, we proved that 

\begin{proposition} \label{model case}
	Let $\olsi E$ be a holomorphic vector bundle on $\mathcal D$. Suppose there is a metric $H_D$  on $\olsi E|_D$ satisfying $\ii\Lambda_{\omega_D}F_{H_D}=\lambda \operatorname{id}$ for some constant $\lambda$, then there is a Hermitian metric $H$ on $\olsi E|_{\mathcal C}$ satisfying:
	\begin{itemize}
		\item [(1).] there is a hermitian metric $\olsi{H}$ on $\olsi{E}$ and a function $f\in C^{\infty}(\mathcal C)$ such that $H=e^f\olsi{H}$, and
	\item [(2).] $|\Lambda_{\omega_{\mathcal C}}F_{H}|=O( e^{-\delta t})$ for some $\delta>0$.
	\end{itemize}
		\end{proposition}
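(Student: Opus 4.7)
The plan is to reduce the proposition to the first lemma of this section by comparing the two holomorphic structures on $\olsi{E}$ near $D$. First I would apply that lemma to $\olsi{E}|_D$, producing a model metric on the pullback bundle,
\begin{equation*}
H_{\mathrm{mod}} := e^{\frac{\lambda}{n-1}t}\,p^*(H_D)\quad\text{on }p^*(\olsi{E}|_D),
\end{equation*}
which satisfies $\ii\Lambda_{\omega_{\mathcal C}}F_{H_{\mathrm{mod}}}=0$. Transporting this via the smooth bundle isomorphism $\Phi\colon\olsi{E}\to p^*(\olsi{E}|_D)$ fixed in the preceding discussion, I set $H:=\Phi^*H_{\mathrm{mod}}$ on $\olsi{E}|_{\mathcal C}$. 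Writing $\olsi{H}:=\Phi^*p^*(H_D)$, which is a smooth Hermitian metric on $\olsi{E}$ over $\mathcal D$ since $p^*(H_D)$ extends smoothly across $D$, and $f:=\frac{\lambda}{n-1}t\in C^{\infty}(\mathcal C)$, one reads off the conformal decomposition $H=e^f\olsi{H}$ required by condition (1).

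For condition (2), I would apply the curvature comparison formula (\ref{difference of curvature}) to the two holomorphic structures $\pp_1$ (the original one on $\olsi{E}$) and $\pp_2$ (the one pulled back from $p^*(\olsi{E}|_D)$ via $\Phi$). By construction $F_{\pp_2,H}=F_{H_{\mathrm{mod}}}$, hence $\ii\Lambda_{\omega_{\mathcal C}}F_{\pp_2,H}=0$, so the task reduces to bounding the three correction terms $\partial_1\beta+\pp_1\mu+[\beta,\mu]$. As established in the discussion preceding the proposition, both $\beta$ and $\mu=-\beta^{*_H}$ are smooth sections of the relevant bundles over the full disc bundle $\mathcal D$, and $\beta|_D=0$ because $\Phi|_D$ is the identity. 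Therefore $F_{\pp_2,H}-F_{\pp_1,H}$ is a smooth $\operatorname{End}(\olsi{E})$-valued $(1,1)$-form on $\mathcal D$ vanishing on $D$, and Proposition~\ref{basic geometric property}(4), applied componentwise, yields $|F_{\pp_2,H}-F_{\pp_1,H}|_{\omega_{\mathcal C},H}=O(e^{-\delta t})$, from which condition (2) follows after contraction with $\omega_{\mathcal C}$.

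The main subtlety, and essentially the only content beyond a direct assembly of the preceding results, is the extension across $D$: the metric $H$ itself blows up at $D$, but the conformal invariance of the induced metric on $\operatorname{End}(\olsi{E})$ allows the smoothness and vanishing properties of $\beta$ and $\mu=-\beta^{*_H}$ to be read off from the extendable metric $\olsi{H}$ instead. This is exactly what makes Proposition~\ref{basic geometric property}(4) applicable and closes the argument.
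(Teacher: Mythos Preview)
Your proposal is correct and follows essentially the same route as the paper: the paper's proof is precisely the discussion leading up to the proposition, and you have reconstructed it faithfully---pull back the HYM metric via the lemma to get $H_{\mathrm{mod}}$, transport it through $\Phi$, and use the curvature comparison formula together with conformal invariance on $\operatorname{End}(\olsi E)$ and Proposition~\ref{basic geometric property}(4) to control the difference. Your final paragraph correctly identifies the only subtle point, namely that conformal invariance is what makes $\mu=-\beta^{*_H}$ and the induced connection on $\operatorname{End}(\olsi E)$ extend smoothly across $D$ despite $H$ blowing up there.
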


Motivated by this, we can give the proof of the Lemma \ref{good initial metric}.

\noindent\textit{Proof of Lemma \ref{good initial metric}.}
	By  the Donaldson-Uhlenbeck-Yau theorem there exists a hermitian metric $H_D$ on $\olsi{E}|_D$ such that 
\begin{equation}\label{hermitian einstein condition}
	\ii\Lambda_{\omega_D}F_{H_D}=\lambda \mathrm{id}.
\end{equation} Extend $H_D$ smoothly to get a hermitian metric $\olsi{H}_0$ on $\olsi{E}$. Using the diffeomorphism  $\Phi$ given in the \textit{Assumption 1}-(3)  we get a positive smooth function on $X$ by abuse of notations stilled denoted by $t$, which is equal to $(\Phi^{-1})^{*}t$ outside a compact set on $X$.

 Define a hermitian metric on $E$ using
 \begin{equation}\label{definition of H_0}
 	 H_0=e^{\frac{\lambda}{n-1}t}\olsi{H}_0.
 \end{equation}
 Then we claim that 
 \begin{equation}\label{decay of contraction of curvature}
 	|\Lambda_{\omega}F_{H_0}|=O(r^{-N_0}).
 \end{equation}From the construction, $F_{H_0}=F_{\olsi{H}_0}-\frac{\lambda dd^c t}{n-1}\operatorname{id}$, where $F_{\olsi{H}_0}$ is smooth bundle valued (1,1)-form on $\olsi{X}$. Recall that for a 2 form $\theta$,
 \begin{equation*}
 	\Lambda_{\omega}\theta=\frac{n\theta\wedge\omega^{n-1}}{\omega^n}.
 \end{equation*} 
Since we assume that $|\Phi^*(\omega)-\omega_{\mathcal C}|=O(r^{-N_0})$, then (\ref{decay of contraction of curvature}) will follow from the following estimate on $\mathcal C$: there exist a $\delta>0$ such that 
\begin{equation}\label{estimate on model case}
	\left|\Lambda_{\omega_{\mathcal C}}\left(\Phi^*(F_{\olsi{H_0}})-\frac{\lambda}{n-1}\Phi^*dd^c t\right)\right|=O(e^{-\delta t}).
\end{equation} 
By (\ref{complex structure is close}) and (\ref{difference after 2.6}), we can easily show that there exists a $\delta>0$ such that
\begin{equation}\label{estimate for exact part}
	\left|\Phi^*dd^ct-dd^ct\right|=|d((\Phi^*J_{\olsi{X}}-J_{\mathcal D})\circ dt)|=O(e^{-\delta t}).
\end{equation}
Using the same argument as we did before Proposition \ref{model case}, we can show that there exists a $\delta>0$ such that
\begin{equation}\label{estimate for smooth bundle part}
	\left|\Phi^*(F_{\olsi{H}_0})-p^*(F_{\olsi{H}_0}|_D)\right|=O(e^{-\delta t}).
\end{equation}
Then (\ref{estimate on model case}) follows from (\ref{hermitian einstein condition}), (\ref{estimate for exact part}) and (\ref{estimate for smooth bundle part}).
\qed

\begin{remark}\label{t can be viewed}
	 Recall that we use $S\in H^0(\olsi{X},L_D)$ to denote a defining section of $D$. Then from the definition of $t$, we know that there exists a smooth hermitian metric $h$ on $L_D$ such that $t=-\log|S|_{h}^2$.
\end{remark}

\begin{remark}\label{full  curvature decay}
	From the above discussion, we also obtain that $|F_{H_0}|=O(r^{1-a})$. In general, we can not expect a higher decay order for the full curvature tensor $F_{H_0}$ since it has non-vanishing component along the directions tangential to $D$, but if $n=2$ we actually proved that 
	\begin{equation}\label{dim 2 case}
		|F_{H_0}|_{\omega}=O(r^{-N_0}).
	\end{equation}
\end{remark}

From the proof given above, the assumption that $E|_D$ is $c_1(N_D)$-polystable is used crucially to have a good initial metric $H_0$ satisfying (1) and (2) in Lemma \ref{good initial metric}, which both are important for the proof. We show the assumption that $E|_D$ is $c_1(N_D)$-polystable is also necessary subject to the conditions in Lemma \ref{good initial metric}. More precisely, we have that
\begin{proposition}\label{necessarity}
		Suppose there is a Hermitian metric $H_0$ on $E$ satisfying:
		\begin{itemize}
			\item [(1).] $|\Lambda_{\omega}F_{H_0}|=O(r^{-N_0})$, where as before $N_0$ is the number in (\ref{assumption on asymptotics}), and
	\item[(2).] there is a hermitian metric $\olsi{H}_0$ on $\olsi{E}$ and a function $f\in C^{\infty}(X)$ such that $H_0=e^f\olsi{H}_0$.
		\end{itemize}
Then $\olsi H_0|_D$ defines a PHYM metric with respect to $\omega_D\in c_1(N_D)$, i.e, $\olsi E|_D$ is $c_1(N_D)$-polystable.
\end{proposition}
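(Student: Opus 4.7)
The plan is to combine the conformal invariance of the trace-free Chern curvature with the precise asymptotic model $\omega_F$ near $D$ to force the $\omega_D$-PHYM equation on $\olsi{E}|_D$ from the assumed decay at infinity.

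First, since $H_0 = e^f \olsi{H}_0$ with $f$ a scalar function, we have $\partial_{H_0} = \partial_{\olsi{H}_0} + \partial f \cdot \mathrm{id}_E$ and therefore $F_{H_0} = F_{\olsi{H}_0} + \bar\partial\partial f \cdot \mathrm{id}_E$. Taking trace-free parts eliminates the scalar contribution: $F_{H_0}^\perp = F_{\olsi{H}_0}^\perp$. Since the induced metric on $\operatorname{End}(E)$ is conformally invariant and $\Lambda_\omega F_{H_0}^\perp$ is the orthogonal projection of $\Lambda_\omega F_{H_0}$ onto trace-free endomorphisms, we obtain $|\Lambda_\omega F_{\olsi{H}_0}^\perp| \le |\Lambda_\omega F_{H_0}| = O(r^{-N_0})$ from hypothesis (1).

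Next I transfer the estimate to the model via $\Phi$. Because $|\Phi^*\omega - \omega_F|_{g_F} = O(r^{-N_0})$ and $|F_{\olsi{H}_0}^\perp|_{g_F} = o(1)$ near $D$ by (\ref{estimate for 1-forms}), replacing $\Lambda_\omega$ by $\Lambda_{\omega_F}$ costs only an $O(r^{-N_0})$ error, so $|\Lambda_{\omega_F}(\Phi^* F_{\olsi{H}_0}^\perp)| = O(r^{-N_0})$ on $\mathcal C$. In local holomorphic coordinates $(z_1,\ldots,z_{n-1}, w)$ near a point of $D$, decompose $\Phi^* F_{\olsi{H}_0}^\perp = A + B + C$ where $A$ is of pure $dz_i \wedge d\bar z_j$ type and $B + C$ collects all components with at least one $dw$ or $d\bar w$ factor. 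Passing to the frame $(e^1, \ldots, e^{n-1}, e^n) = (dz_1, \ldots, dz_{n-1}, \tfrac{dw}{w} - \psi_i dz_i)$, in which $\omega_F^{-1}$ is block diagonal by (\ref{model metric expansion}), a direct computation using $|w|^2/F''(t) = O(e^{-\delta t})$ from (\ref{estimate for 1-forms}) yields
\begin{equation*}
\Lambda_{\omega_F} A = F'(t)^{-1}\Lambda_{\omega_D} A, \qquad |\Lambda_{\omega_F}(B + C)| = O(e^{-\delta t})
\end{equation*}
for some $\delta > 0$; the exponential gain for $B$ and $C$ comes from the extra factor of $|w|$ or $|w|^2$ produced when expressing $dw, d\bar w$ in the $(e^i, e^n)$ frame.

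Now $A$ extends smoothly to $\mathcal D$ with $A|_{w=0} = F_{\olsi{H}_0|_D}^\perp$, so along the fiber over any $p \in D$, as $\xi \to p$,
\begin{equation*}
\bigl|\Lambda_{\omega_F}(\Phi^* F_{\olsi{H}_0}^\perp)\bigr|(\xi) = F'(t)^{-1}\bigl|\Lambda_{\omega_D} F_{\olsi{H}_0|_D}^\perp(p)\bigr| + o\bigl(F'(t)^{-1}\bigr).
\end{equation*}
With $F(t) = At^a$, $a \in (1, n/(n-1)]$, one has $F'(t)^{-1} \sim t^{1-a} \sim r^{-2(a-1)/a}$ with $2(a-1)/a < 2 \le N_0$, so compatibility with the $O(r^{-N_0})$ bound forces $\Lambda_{\omega_D} F_{\olsi{H}_0|_D}^\perp(p) = 0$. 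Since $p \in D$ is arbitrary, $\olsi{H}_0|_D$ is an $\omega_D$-PHYM metric on $\olsi{E}|_D$, and the easy direction of the Donaldson-Uhlenbeck-Yau theorem on the compact K\"ahler manifold $(D, \omega_D)$ then gives $c_1(N_D)$-polystability of $\olsi{E}|_D$.

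The main technical difficulty is the computation of $\Lambda_{\omega_F}$ in the second paragraph: one must bookkeep all four types of components in $F_{\olsi{H}_0}^\perp$ and verify that the horizontal part contributes only the weak polynomial factor $F'(t)^{-1}$, while each mixed or vertical component picks up an extra power of $|w|$ and hence an exponentially small contribution. This separation of scales — polynomial from $A$, exponential from $B, C$ — is precisely what allows the strong hypothesis $O(r^{-N_0})$ with $N_0 \ge 8$ to defeat the much weaker upper bound on the putative leading term and force its coefficient to vanish identically on $D$.
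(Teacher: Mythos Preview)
Your argument is correct and follows essentially the same route as the paper's proof: reduce to the trace-free curvature $F_{\olsi H_0}^\perp$ via conformal invariance, transfer the $O(r^{-N_0})$ bound on $\Lambda_\omega F_{\olsi H_0}^\perp$ to $\Lambda_{\omega_F}$ on the model $\mathcal C$, and then read off $\Lambda_{\omega_D}(F_{\olsi H_0}^\perp|_D)=0$ from the explicit form of $\omega_F$. The paper compresses your horizontal/vertical decomposition and scale comparison into a single sentence invoking the expression (\ref{explicit expression of kahler form}), whereas you spell out why the $dz_i\wedge d\bar z_j$ part contributes the dominant $F'(t)^{-1}$ term and the remaining components are exponentially small; this is exactly the content behind the paper's appeal to ``the explicit expression of the K\"ahler form $\omega_F$.''
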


\begin{proof}
	By these two assumptions, we have 
		$\left|\ii\Lambda_{\omega}F_{\olsi H_0}+\Delta_{\omega}f \operatorname{id} \right|=O(r^{-N_0})$. In particular the trace-free part of $\Lambda_{\omega}F_{\olsi H_0}$  decays like $r^{-N_0}$. Since $$F_{\olsi H_0}^{\perp}=F_{\olsi H_0}-\frac{\tr (F_{\olsi H_0})}{\rank E} \operatorname{id}$$ is a smooth bundle valued (1,1)-form on $\olsi X$, its pull-back under $\Phi$ to $\mathcal D$ is a smooth bundle valued 2-form satisfying that its restriction to $D$ is of type (1,1) and $$|\Lambda_{\omega_{F}}\Phi^*(F_{\olsi{H}_0}^{\perp})|=O(r^{-N_0}).$$ From the explicit expression of the K\"ahler form $\omega_{F}$ in (\ref{explicit expression of kahler form}) and the assumption on the potential $F$ in \textit{Assumption 1}, we know that $$\Lambda_{\omega_D}(F_{\olsi{H}_0}^{\perp}|_D)=0.$$
	\end{proof}

Next we can show that the set $\mathcal{P}_{H}$ defined in (\ref{define of P_H}) is unique if we fix the induced metric on the determinant line bundle. More precisely, we have 
\begin{proposition}\label{uniqueness of P_H}
	Suppose we have two metrics $H_0$ and $H_1$ satisfying the condition (1) and (2) in Lemma \ref{good initial metric} and $\det H_0=\det  H_1$, then 
	\begin{equation*}
		\mathcal P_{H_0}=\mathcal P_{H_1}.
	\end{equation*}
	In particular, there exists a constant $C>0$ such that 
		$C^{-1}H_0\leq H_1\leq CH_0$.
\end{proposition}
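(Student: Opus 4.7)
The plan is to show that $h := H_0^{-1} H_1$ extends smoothly to a section of $\operatorname{End}(\olsi E)$ on $\olsi X$ and that $s := \log h$ satisfies both $\|s\|_{L^\infty} < \infty$ and $\|\bar\partial s\|_{L^2} < \infty$, so that $H_1 = H_0 e^s$ lies in $\mathcal P_{H_0}$. The equality $\mathcal P_{H_0} = \mathcal P_{H_1}$ then follows by writing any $K = H_0 e^{s_0} \in \mathcal P_{H_0}$ as $K = H_1 e^{s_1}$ with $e^{s_1} = h^{-1} e^{s_0}$ and using triangle-type estimates for $L^\infty$ and $L^2$ together with the bounds on $s$ and $s_0$. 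The pointwise comparability $C^{-1}H_0 \le H_1 \le CH_0$ is then immediate from $\|s\|_{L^\infty}<\infty$.

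Using condition (2) I would write $H_i = e^{f_i} \olsi H_i$, so that $h = e^{f_1 - f_0} \olsi H_0^{-1} \olsi H_1$. The determinant constraint $\det H_0 = \det H_1$ forces $f_1 - f_0 = \tfrac{1}{n}\log(\det \olsi H_0 / \det \olsi H_1)$, a smooth function on $\olsi X$. Consequently $h$ is a smooth section of $\operatorname{End}(\olsi E)$ on $\olsi X$ with $\det h \equiv 1$; being smooth, positive, self-adjoint on the compact $\olsi X$, it is uniformly bounded above and bounded away from $0$. Hence $s = \log h$ is a bounded, trace-free, smooth section of $\operatorname{End}(\olsi E)$ on $\olsi X$, which gives the $L^\infty$ bound directly.

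The core step is the $L^2$-bound on $\bar\partial s$, for which one must use information about $s|_D$. By Proposition \ref{necessarity} (which crucially uses condition (1)), both $\olsi H_0|_D$ and $\olsi H_1|_D$ are PHYM on the polystable bundle $\olsi E|_D$ with respect to $\omega_D$. It is a standard fact that any two PHYM metrics on a polystable bundle over a compact K\"ahler manifold have ratio of the form $e^{\psi} g$, where $g$ is a parallel bundle automorphism preserving the polystable decomposition and $\psi$ is a smooth function (one first conformally rescales each PHYM to HE and then uses that HE metrics on a polystable bundle differ by a parallel automorphism). The constraint $\det h \equiv 1$ restricted to $D$ then forces $(f_1 - f_0)|_D + \psi = -\tfrac{1}{n}\log\det g$, which is a constant because $g$ is parallel. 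Therefore $h|_D$ equals $g$ up to a constant scalar, hence is parallel on $D$, so $s|_D$ is parallel on $D$ and in particular $\bar\partial^D(s|_D) = 0$. In local holomorphic coordinates $(z_1,\dots,z_{n-1},w)$ near $D = \{w=0\}$ this yields the Taylor expansion $s = s|_D + O(|w|)$, so writing $\bar\partial s = \sum_i \alpha_i\, d\bar z^i + \beta\, d\bar w$ we have $\alpha_i|_{w=0}=0$, hence $|\alpha_i| \le C|w| = Ce^{-t/2}$. Combining with the pointwise model estimates $|d\bar z^i|^2_{g_F} \sim t^{1-a}$ and $|d\bar w|^2_{g_F} = O(e^{-\delta t})$ from (\ref{estimate for 1-forms}), one obtains $|\bar\partial s|_{g_F} \le Ce^{-\delta' t}$ for some $\delta' > 0$, and (\ref{closeness of metric tensor}) transfers the same exponential decay to $|\bar\partial s|_g$. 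This is far more than enough for $L^2$-integrability against the polynomial volume growth.

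The main obstacle is the parallelism of $s|_D$: without it, a generic smooth section on $\olsi X$ would have $|\bar\partial s|_g$ decaying only like $t^{(1-a)/2} \sim r^{1/a - 1}$, which at the critical exponent $a = n/(n-1)$ is merely $r^{-1/n}$ and fails $L^2$-integrability against the at-most-quadratic volume growth of Lemma \ref{vanishing at infinity decay}. The essential algebraic input is therefore the structure of PHYM metrics on polystable bundles, applied to $\olsi E|_D$ via Proposition \ref{necessarity}, together with the normalization $\det H_0 = \det H_1$ which absorbs the non-constant conformal factor $e^\psi$ into a constant.
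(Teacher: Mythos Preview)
Your approach is essentially the same as the paper's: both reduce to showing that $h=H_0^{-1}H_1$ extends smoothly over $\olsi X$ with $\det h=1$, invoke Proposition~\ref{necessarity} to see that $\olsi H_0|_D$ and $\olsi H_1|_D$ are each PHYM on $\olsi E|_D$, and then use uniqueness of PHYM metrics (with fixed determinant) on the compact polystable bundle $\olsi E|_D$ to conclude $\nabla(h|_D)=0$, from which the $L^2$-bound on $\bar\partial s$ follows by the same local-coordinate computation as in Lemma~\ref{key observation}(1). The paper streamlines your conformal-factor bookkeeping by first normalizing $\det\olsi H_0=\det\olsi H_1$ so that $f_0=f_1$ and $h=\olsi H_0^{-1}\olsi H_1$ directly, and is terser about the final step $\mathcal P_{H_0}=\mathcal P_{H_1}$, which you spell out more carefully. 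One small slip: in your determinant computation the exponent should be $\tfrac{1}{\operatorname{rank}E}$, not $\tfrac{1}{n}$.
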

	
\begin{proof}
	By condition (1) in Lemma \ref{good initial metric}, we know that there are smooth hermitian metrics $\olsi{H}_0$ and $\olsi{H}_1$ and smooth functions $f_0$ and $f_1$ on $X$ such that for $i=0,1$	\begin{equation*}
		 H_i=\olsi{H}_ie^{f_i}.
	\end{equation*}
And by doing a conformal change, we may assume $\det \olsi{H}_0=\det \olsi{H}_1$. Let $h=\olsi{H}_0^{-1}\olsi{H}_1$. Since $\det H_0=\det H_1$, 
we have \begin{equation*}
	H_1=H_0h.
\end{equation*}
	 From the proof of Proposition \ref{necessarity}, we know that $\olsi{H}_i|_D$ are PHYM. By the uniqueness of PHYM metrics on compact K\"ahler manifolds, we know that
	 \begin{equation*}
	 		\nabla (h|_D)=0,
	 \end{equation*}
	 where $\nabla$ denotes the induced connection on $\operatorname{End}(\olsi{E})$ from the Chern connection on $(\olsi{E},\olsi{H}_0)$.
From this and noting that $H_0$ is conformal to an extendable metric, we can check directly that 
\begin{equation*}
	|\nabla h|\in L^2(X;\omega,H_0)
\end{equation*}
	Then from the definition of $\mathcal P_{H_0}$, we obtain that $\mathcal P_{H_0}=\mathcal P_{H_1}.$
\end{proof}




\section{Proof of the main theorem}\label{proof of the main theorem}

We first prove a lemma on the degree vanishing property. 
\begin{lemma} \label{L1 imply degree 0}
	Let $(X^n, \omega)$ be a complete K\"{a}hler manifold and $\beta$ be a d-closed $(k,k)$ form with $\displaystyle \int_{X} |\beta|\omega^n$ finite for some $1\leq k\leq n-1$.  Suppose $\omega=d\eta$ for some smooth 1-form $\eta$ with $|\eta|=O(r)$, then $\displaystyle\int \beta\wedge \omega^{n-k}=0$.
\end{lemma}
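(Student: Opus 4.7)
The plan is to write $\beta\wedge\omega^{n-k}$ as an exact form and then integrate by parts using Stokes' theorem against a sequence of cut-off functions, using the linear growth of $\eta$ to control the boundary-like contribution.

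First I would observe the algebraic identity
\begin{equation*}
	\beta\wedge\omega^{n-k}=d\bigl(\beta\wedge\eta\wedge\omega^{n-k-1}\bigr),
\end{equation*}
which follows from $d\beta=0$, $d\omega=0$, and $\omega=d\eta$, together with the fact that $\beta$ has even degree. So the $(n,n)$-form to be integrated is globally exact on $X$.

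Next, since $(X,g)$ is complete, I can pick a base point $p$ and build Lipschitz cut-off functions $\chi_R$ with $\chi_R\equiv 1$ on $B_R(p)$, $\chi_R\equiv 0$ outside $B_{2R}(p)$, and $|\nabla\chi_R|\le C/R$ almost everywhere (for instance $\chi_R(x)=\rho(d(x,p)/R)$ for a standard bump $\rho$; the distance function is 1-Lipschitz). Stokes' theorem applied to the compactly supported form $\chi_R\,\beta\wedge\eta\wedge\omega^{n-k-1}$ yields
\begin{equation*}
	\int_X \chi_R\,\beta\wedge\omega^{n-k}=-\int_X d\chi_R\wedge\beta\wedge\eta\wedge\omega^{n-k-1}.
\end{equation*}
Since $|\beta\wedge\omega^{n-k}|\le C|\beta|\,\omega^{n}$ pointwise and $|\beta|\,\omega^{n}$ is integrable, dominated convergence shows the left-hand side converges to $\int_X\beta\wedge\omega^{n-k}$ as $R\to\infty$.

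It then remains to show the right-hand side tends to zero. Using the pointwise bound $|\eta|=O(r)$ and the support of $d\chi_R$, I estimate
\begin{equation*}
	\left|\int_X d\chi_R\wedge\beta\wedge\eta\wedge\omega^{n-k-1}\right|\le C\int_{B_{2R}\setminus B_R}|\nabla\chi_R|\,|\eta|\,|\beta|\,\omega^n\le C'\int_{B_{2R}\setminus B_R}|\beta|\,\omega^n,
\end{equation*}
where the $1/R$ from $|\nabla\chi_R|$ exactly cancels the linear growth of $|\eta|$. The right-hand side is the tail of a convergent integral, so it goes to $0$ as $R\to\infty$, giving $\int_X\beta\wedge\omega^{n-k}=0$. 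The main subtlety is to match the rate of decay of $|\nabla\chi_R|$ against the growth of $|\eta|$; the hypothesis $|\eta|=O(r)$ is precisely the borderline case where the cancellation is possible, and this is the step where the assumption is used in an essential way.
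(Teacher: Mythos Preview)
Your proof is correct and follows essentially the same approach as the paper: both arguments use cut-off functions $\chi_R$ supported on $B_{2R}\setminus B_R$ with $|\nabla\chi_R|\le C/R$, integrate by parts via the identity $\beta\wedge\omega^{n-k}=d(\beta\wedge\eta\wedge\omega^{n-k-1})$, and observe that the $1/R$ from the cut-off cancels the $O(r)$ growth of $\eta$ so that the remainder is controlled by the tail $\int_{B_{2R}\setminus B_R}|\beta|\,\omega^n\to 0$. Your write-up is slightly more explicit about the exactness identity and the use of dominated convergence, but the argument is the same.
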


\begin{proof}
	Fix a base point $p\in X$ and let $\rho_R$ be a smooth cut-off function which is 1 on $B_R(p)$, 0 outside $B_{2R}(p)$ and $|\nabla \rho_R|\leq \frac{C}{R}$ where $C$ is a constant independent of $R$.  Integrating by parts, we have the following 
		\begin{equation*}
		\begin{aligned}
			\left|\int_X \beta\wedge\omega^{n-k}\right|&=\left|\lim_{R\rightarrow\infty}\int_{B_{2R}(p)} \rho_R\beta\wedge\omega^{n-k}\right|\\
			&\leq C\lim_{R\rightarrow\infty}\int_{B_{2R}(p)\backslash B_R(p)}\frac{1}{R} \left|\beta\wedge \omega^{n-k-1}\wedge \eta\right|\omega^n,
		\end{aligned}
			\end{equation*}
	which is bounded by $C\int_{B_{2R}(p)\backslash B_R(p)}\left|\beta\right|\omega^n$. And this term tends to 0 as $R\rightarrow\infty$ since $|\beta| \in L^1$.
\end{proof}

From now on, we assume $\omega=\omega_0+dd^c\varphi$ is a K\"ahler form satisfying the \textit{Assumption 1} in Section \ref{assumption on the asymptotic behaviour}. Note that we only proved that $dd^c\varphi=d\psi$ for a smooth form $\psi$ with $|\psi|=O(r^{1+\frac{1}{a}})$ (see Remark \ref{a new 1-form potential}). Therefore we can not apply Lemma \ref{L1 imply degree 0} directly. Typically we have a definite decay order for $|\beta|$, so we can still use integration by parts to show some degree vanishing properties. More precisely, we have

\begin{lemma}\label{a more precise degree vanishing}
	Let $\beta$ be a d-closed $(k,k)$ form for some $1\leq k\leq n-1$, satisfying $|\beta|=O(r^{-N_0})$. Then
	\begin{equation*}
		\int_X\beta\wedge(dd^c\varphi)^{n-k}=0
	\end{equation*}
\end{lemma}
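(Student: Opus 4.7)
The idea is simply integration by parts, using the global primitive for $dd^c\varphi$ provided by Remark \ref{a new 1-form potential}. That remark gives a smooth real $1$-form $\Theta := d^c(\chi\varphi) + \psi$ on $X$ with $dd^c\varphi = d\Theta$ and $|\Theta|_\omega = O(r^{1+1/a})$. Since $d\beta = 0$ and $dd^c\varphi$ is $d$-closed, a direct expansion (using that $\beta$ has even degree) yields
\begin{equation*}
\beta \wedge (dd^c\varphi)^{n-k} \;=\; d\bigl(\beta \wedge \Theta \wedge (dd^c\varphi)^{n-k-1}\bigr),
\end{equation*}
so I just need to check that integrating this exact $(2n)$-form against suitable cutoffs gives a boundary term that vanishes in the limit.

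First I would verify absolute integrability of $\beta \wedge (dd^c\varphi)^{n-k}$. Since $\omega_0$ is smooth on $\olsi X$ with $\omega_0|_D = 0$, Lemma \ref{vanishing at infinity decay}(3) gives $|\omega_0|_\omega = O(r^{-N_0})$, hence $|dd^c\varphi|_\omega = |\omega - \omega_0|_\omega$ is bounded and so is $|(dd^c\varphi)^{n-k}|_\omega$. With $|\beta|_\omega = O(r^{-N_0})$ and the at-most-quadratic volume growth from Lemma \ref{vanishing at infinity decay}(1), the integrand is in $L^1$ because $N_0 \geq 8 > 2$.

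Next pick a standard cutoff $\rho_R$ with $\rho_R \equiv 1$ on $B_R$, $\rho_R \equiv 0$ outside $B_{2R}$, and $|d\rho_R|_\omega \leq C/R$. Applying Stokes' theorem to $\rho_R\, \beta \wedge \Theta \wedge (dd^c\varphi)^{n-k-1}$ (which is compactly supported) gives
\begin{equation*}
\int_X \rho_R\, \beta \wedge (dd^c\varphi)^{n-k} \;=\; -\int_X d\rho_R \wedge \beta \wedge \Theta \wedge (dd^c\varphi)^{n-k-1}.
\end{equation*}
On the annulus $B_{2R} \setminus B_R$ the integrand on the right is pointwise bounded by
\begin{equation*}
C \cdot R^{-1} \cdot R^{-N_0} \cdot R^{1+1/a} \cdot 1,
\end{equation*}
and the volume of the annulus is at most $CR^2$, so the right-hand side is $O(R^{2 - N_0 + 1/a})$. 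Since $1/a < 1$ and $N_0 \geq 8$, this tends to $0$ as $R\to\infty$, while the left-hand side tends to $\int_X \beta \wedge (dd^c\varphi)^{n-k}$ by dominated convergence. This yields the claim.

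\textbf{Main obstacle.} There is no serious obstacle; the only thing requiring care is the balancing of exponents. The $R^{1+1/a}$ growth of the primitive $\Theta$ is the worst actor, but it is outweighed by the $R^{-N_0}$ decay of $\beta$ combined with the $R^{-1}$ from $d\rho_R$ and the $R^2$ volume factor. The generous assumption $N_0 \geq 8$ leaves plenty of room; in fact any $N_0 > 2 + 1/a$ would suffice for this argument.
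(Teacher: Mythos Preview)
Your proof is correct and follows essentially the same route as the paper's. Both arguments use the primitive $\Theta = d^c(\chi\varphi)+\psi$ from Remark \ref{a new 1-form potential} (the paper refers directly to $\psi$, which agrees with $\Theta$ outside a compact set), apply Stokes' theorem against the cutoffs $\rho_R$, and then balance the exponents $R^{-1}\cdot R^{-N_0}\cdot R^{1+1/a}\cdot R^{2}$ exactly as you do; your write-up is simply more explicit about the exactness identity and the integrability check.
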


\begin{proof}
	By a similar integration by part argument as in the proof of Lemma \ref{L1 imply degree 0}, it suffices to show that  
	\begin{equation*}
		\lim_{R\rightarrow\infty}\frac{1}{R} \int_{B_{2R}(p)\backslash B_R(p)}\left|\beta\wedge (dd^c\varphi)^{n-k-1}\wedge \psi\right|(dd^c\varphi)^n=0.
	\end{equation*}
	This follows from the facts that $|\beta|=O(r^{-N_0})$,  $|\psi|=O(r^{1+\frac{1}{a}})$ and the volume growth order of $\omega$ is at most 2.
\end{proof}

The following two lemmas are crucial for us since they relate information on $\olsi X$ and that on $X$.
\begin{lemma}\label{degree coincide}Let $H_0$ be the metric constructed in Lemma \ref{good initial metric}. One has the following equality:
	\begin{equation}\label{degree equality}
		\int_{X}\frac{\ii}{2\pi}\tr(F_{H_0})\wedge \omega^{n-1}=\int_{\olsi X}c_1(\olsi E)\wedge {[\omega_0]}^{n-1}.
	\end{equation}
\end{lemma}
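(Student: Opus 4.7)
The plan is to decompose $\tfrac{i}{2\pi}\tr F_{H_0}$ using the conformal relation $H_0 = e^{\lambda t/(n-1)}\bar H_0$ from Lemma~\ref{good initial metric}, then expand $\omega^{n-1}$ via $\omega = \omega_0 + dd^c\varphi$ and eliminate most terms using the vanishing lemmas already established. The conformal change on $E$ yields
\[
\tfrac{i}{2\pi}\tr F_{H_0} = \gamma_0 - c\, dd^c t,
\]
with $\gamma_0 := \tfrac{i}{2\pi}\tr F_{\bar H_0}$ extending to a smooth closed $(1,1)$-form on $\bar X$ representing $c_1(\bar E)$, and $c := r\lambda/(2\pi(n-1))$ where $r = \rank (E)$. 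Expanding $\omega^{n-1}$ by the binomial formula, for $1\le k\le n-2$ both $\gamma_0\wedge\omega_0^k$ and $dd^c t\wedge\omega_0^k$ are $d$-closed $(k+1,k+1)$-forms of size $O(r^{-N_0})$ (since $\gamma_0$ and $dd^c t$ are bounded on $X$ while $|\omega_0|_g = O(r^{-N_0})$ by Lemma~\ref{vanishing at infinity decay}(3)); thus Lemma~\ref{a more precise degree vanishing} makes the corresponding integrals vanish. For $k = n-1$, the integrand $\gamma_0\wedge\omega_0^{n-1}$ is continuous on $\bar X$ and yields exactly $\int_{\bar X}c_1(\bar E)\wedge [\omega_0]^{n-1}$, while $\int_X dd^c t\wedge\omega_0^{n-1}=0$ by Lemma~\ref{poincare lelong} (applied to $f = \log|S|_h^2 = -t$, cf.\ Remark~\ref{t can be viewed}).

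All that remains is the $k = 0$ contribution $I_0 := \int_X\gamma\wedge (dd^c\varphi)^{n-1}$ with $\gamma := \tfrac{i}{2\pi}\tr F_{H_0}$, which must vanish. Writing $(dd^c\varphi)^{n-1} = d(d^c\varphi\wedge (dd^c\varphi)^{n-2})$ and using that $\gamma$ is closed, Stokes' theorem on a ball exhaustion gives $I_0 = \lim_{R\to\infty}\int_{\partial B_R}\gamma\wedge d^c\varphi\wedge (dd^c\varphi)^{n-2}$. The crucial observation, obtained from $i\Lambda_{\omega_D}F_{H_D} = \lambda\,\mathrm{id}$, is that $\Lambda_{\omega_D}(\gamma|_D) = \tfrac{r\lambda}{2\pi} - c(n-1) = 0$; equivalently, the restriction $\gamma|_D = \gamma_0|_D - c\,\omega_D$ is primitive with respect to $\omega_D$, so $\gamma|_D\wedge\omega_D^{n-2} = 0$ on $D$. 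Since $\gamma$ extends smoothly to the disc bundle $\mathcal D$ (both $\gamma_0$ and $dd^c t = \omega_D$ do), Proposition~\ref{basic geometric property}(4) yields the exponential decay $|\gamma\wedge (dd^c t)^{n-2}|_g = O(e^{-\delta t})$. Combining the identity $d^c t\wedge i\partial t\wedge\bar\partial t = 0$ with the model-metric expansion $\omega_F^{n-2} = F'(t)^{n-2}(dd^c t)^{n-2} + (n-2)F'(t)^{n-3}F''(t)(dd^c t)^{n-3}\wedge i\partial t\wedge\bar\partial t$, the leading part of $d^c F(t)\wedge\omega_F^{n-2}$ is $F'(t)^{n-1}\,d^c t\wedge(dd^c t)^{n-2}$, whose wedge with $\gamma$ produces the exponentially small factor $\gamma\wedge(dd^c t)^{n-2}$. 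Hence the pulled-back boundary integrand decays exponentially on $\mathcal C$, and the boundary contribution vanishes as $R\to\infty$.

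The main obstacle will be to control the polynomially small discrepancies $\Phi^*\omega - \omega_F = O(r^{-N_0})$ and $d^c\varphi - d^c F(t)$ (the latter organized by Lemma~\ref{a good 1-form potential} into an error 1-form $\eta$ of size $O(r^{-N_0+1+1/a})$ plus a closed 1-form on the topologically non-trivial $\mathcal C$) on the model region: these corrections must be absorbed into integrals that vanish either via Lemma~\ref{a more precise degree vanishing} (applied to forms of the type $\gamma\wedge\omega_0^\ell$ from an auxiliary $\omega_0$-expansion) or via additional integration-by-parts, so that only the primitivity-induced exponential cancellation survives in the limit.
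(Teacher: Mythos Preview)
Your approach is essentially the same as the paper's: both decompose $\tr F_{H_0}$ via the conformal relation, expand $\omega^{n-1}$ binomially, kill the mixed terms $\omega_0^k\wedge(dd^c\varphi)^{n-1-k}$ for $1\le k\le n-2$ by Lemma~\ref{a more precise degree vanishing}, reduce the pure $\omega_0^{n-1}$ term to the right-hand side via Lemma~\ref{poincare lelong}, and handle the pure $(dd^c\varphi)^{n-1}$ term by the boundary integral argument based on the primitivity $\gamma|_D\wedge\omega_D^{n-2}=0$. The paper's treatment of this last step is slightly cleaner in that it works directly on the level sets $\{|\xi|_{h_D}=\epsilon\}$ after pulling back by $\Phi$, replaces $(dd^c\varphi)^{n-2}\wedge d^c\varphi$ by $(dd^cF)^{n-2}\wedge d^cF$ using Lemma~\ref{a good 1-form potential} and the condition $N_0>8$, and observes that on a level set $(dd^cF)^{n-2}=F'(t)^{n-2}\omega_D^{n-2}$ so the integrand vanishes identically---this absorbs exactly the ``polynomially small discrepancies'' you flag in your final paragraph, and the paper also treats $n=2$ separately using the stronger bound $|F_{H_0}|=O(r^{-N_0})$ from Remark~\ref{full  curvature decay}.
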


\begin{proof} Firstly, recall that$$n\tr(F_{H_0})\wedge \omega^{n-1}=\Lambda_{\omega}\tr(F_{H_0})\omega^n.$$
By the construction in Lemma \ref{good initial metric}, we know that $|\Lambda_{\omega}\tr(F_{H_0})|=O(r^{-N_0})$. Since the volume growth order of $\omega$ is at most 2, we know that $\Lambda_{\omega}\tr(F_{H_0})$ is absolutely integrable. Therefore the left hand side of (\ref{degree coincide}) is well-defined.

 By the Chern-Weil theory, for any smooth hermitian metric $\olsi{H}_0$ on $\ols{E}$ we have
	\begin{equation*}
		\int_{\olsi X}c_1(\olsi E)\wedge {[\omega_0]}^{n-1}=\int_{\olsi X}\frac{\ii}{2\pi}\tr (F_{\olsi H_0})\wedge \omega_0^{n-1}.
	\end{equation*}
	By the construction \eqref{definition of H_0}, $H_0=e^{Ct}$$
\olsi{H}_0$ for some constant $C$ and $t$ defined in Section \ref{existence of good initial metric}. Moreover by Remark \ref{t can be viewed}, $t=-\log |S|^2_h$ for some smooth hermitian metric on $L_D$. By Lemma \ref{poincare lelong}, we obtain that $\int_X dd^ct\wedge \omega_0^{n-1}=0$. So we have
	\begin{equation}\label{a middle equality}
		\int_{\olsi X}c_1(\olsi E)\wedge {[\omega_0]}^{n-1}=\int_{X}\frac{\ii}{2\pi}\tr (F_{H_0})\wedge \omega_0^{n-1}.
	\end{equation}
	Using (\ref{a middle equality}) and $\omega=\omega_0+dd^c \varphi$, to prove (\ref{degree equality}), it suffices to show that for any $k=1,\cdots,n-1$,
	\begin{equation}\label{to be proved}
		\int_X\tr(F_{H_0})\wedge \omega_0^{n-1-k}\wedge(dd^c\varphi)^{k}=0.
	\end{equation}
	\noindent\textit{Case 1.} $1\leq k\leq n-2$. Since $\omega_0$ vanishes when restricted to $D$, by Lemma \ref{vanishing at infinity decay}, we know that $|\omega_0|=O(r^{-N_0})$. Combining this with Remark \ref{full  curvature decay}, we know that $\tr(F_{H_0})\wedge \omega_0^{n-1-k}$ is a closed $(n-k,n-k)$-form with decay order at least $r^{-N_0}$. Therefore Lemma \ref{a more precise degree vanishing} implies that its integral is 0.\\
	\noindent\textit{Case 2. }$k=n-1$. If $n=2$, then by (\ref{dim 2 case}) we can still apply Lemma \ref{a more precise degree vanishing}. If $n\geq 3$, note that though $|\Lambda_{\omega}\tr(F_{H_0})|=O(r^{-N_0})$, $|\tr(F_{H_0})|$ is not in $L^1$ in general. So we can not apply Lemma \ref{a more precise degree vanishing} directly. Instead we shall use the asymptotic behaviour of $\tr(F_{H_0})$ obtained from the construction.  Integrating by parts and pulling back via $\Phi$, we know that 
	\begin{equation*}
	\begin{aligned}
		\int_X\tr(F_{H_0})\wedge(dd^c\varphi)^{n-1}&=\lim_{\epsilon_i\rightarrow0}\int_{\Phi(|\xi|_{h_D}=\epsilon_i)}\tr(F_{H_0})\wedge (dd^c \varphi)^{n-2}\wedge d^c\varphi\\
		&=\lim_{\epsilon_i\rightarrow0}\int_{|\xi|_{h_D}=\epsilon_i}\Phi^*\left(\tr(F_{H_0})\wedge(dd^c\varphi)^{n-2}\wedge d^c\varphi\right)
	\end{aligned}
	\end{equation*}
	Then by (\ref{assumption on asymptotics}), Lemma \ref{a good 1-form potential} and the assumption $N_0>8$, we obtain that the right hand side of the above equality equals
	\begin{equation*}
		\lim_{\epsilon_i\rightarrow0}\int_{|\xi|_{h_D}=\epsilon_i}\Phi^*(\tr(F_{H_0}))\wedge(dd^cF(t))^{n-2}\wedge d^cF(t)
	\end{equation*}
	By (\ref{estimate for exact part}) and (\ref{estimate for smooth bundle part}), we know that it equals 
	\begin{equation}\label{almost done}
		\lim_{\epsilon_i\rightarrow0}\int_{|\xi|_{h_D}=\epsilon_i}\left(p^*\tr(F_{\olsi{H}_0})-\frac{\lambda \rank(E)}{n-1}
	dd^ct\right) \wedge(dd^cF(t))^{n-2}\wedge d^cF(t)
	\end{equation}
	Note that when restricted to the level set of $t$, $$(dd^cF(t))^{n-2}=F'(t)^{n-2}\omega_D^{n-2}.$$ Therefore $\left(p^*\tr(F_{\olsi{H}_0})-\frac{\lambda \rank(E)}{n-1}
	dd^ct\right) \wedge(dd^cF(t))^{n-2}=0$ by (\ref{hermitian einstein condition}). 
	\end{proof}

\begin{lemma}\label{key observation} Suppose $\olsi{E}|_D$ is $c_1(N_D)$-polystable and let $H_0$ be the metric constructed in Lemma \ref{good initial metric}. 	\begin{itemize}
		\item [(1).] Let $\olsi S$ be a coherent reflexive subsheaf of $\olsi E$.  If $\olsi S|_D$ is locally free and  a splitting factor of $\olsi E|_D$, then $\pp \pi_S^{H_0}\in L^2(X;\omega, H_0)$. 
		\item [(2).] Let $\pi\in W^{1,2}_{loc}(X,\olsi E^*\otimes\olsi E;\omega,H_0)$ be a weakly holomorphic projection map. If $\pp \pi \in L^2(X;\omega,H_0)$, then there exists a coherent reflexive subsheaf $\olsi S$ of $\olsi E$ such that $\pi=\pi^{H_0}_S$ a.e. and $\olsi S|_D$ is a splitting factor of $\olsi E|_D$.	 
	\end{itemize}
\end{lemma}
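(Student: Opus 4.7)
My strategy for part (1) is to combine the polystability of $\olsi E|_D$ with conformal invariance of the metric on $\operatorname{End}(\olsi E)$ and the tangential vanishing of Proposition \ref{basic geometric property}(4). By construction $\olsi H_0|_D=H_D$ is a PHYM metric on $\olsi E|_D$, so any splitting factor $\olsi S|_D$ is $H_D$-orthogonal to its complement, and the smooth orthogonal projection $\pi^{H_D}_{\olsi S|_D}$ is $\pp_D$-closed. Because $\olsi S$ is reflexive and $\olsi S|_D$ is locally free, the singular set $\Sigma$ of $\olsi S$ is disjoint from $D$, so $\olsi S$ is locally free in a neighbourhood of $D$ and $\pi^{\olsi H_0}_{\olsi S}$ is smooth there. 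The operator $*_H$ on $\operatorname{End}(\olsi E)$ is conformally invariant in $H$, hence $\pi^{H_0}_{\olsi S}=\pi^{\olsi H_0}_{\olsi S}$. Writing $\pp\pi=\sum_i\alpha_i\,d\bar z_i+h\,d\bar w$ in the local holomorphic coordinates $(z_1,\dots,z_{n-1},w)$ used in (\ref{model metric expansion}), the splitting assumption forces $\alpha_i|_{w=0}=0$. Pulling back via $\Phi$ to $\mathcal C$ and invoking (\ref{estimate for 1-forms}), the $d\bar z_i$-terms contribute $O(|w|\cdot t^{(1-a)/2})=O(e^{-\delta t})$ and the $d\bar w$-term contributes $O(|w|)=O(e^{-t/2})$, so $|\pp\pi|_\omega$ decays exponentially in $t$. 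Together with the polynomial volume growth of Lemma \ref{vanishing at infinity decay}(1), this yields $L^2$ integrability on a neighbourhood of $D$. Away from $D$ we are on a relatively compact subset of $X$, so Theorem \ref{current lemma} provides the required $W^{1,2}_{loc}$ control across the codimension $\geq 2$ singular set $\Sigma$, and part (1) follows.

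For part (2), I first apply the Uhlenbeck-Yau theorem (Theorem \ref{uhlenbeckyau}) on $X$ to produce a coherent subsheaf $S\subset E$ with $\pi=\pi^{H_0}_S$ almost everywhere; after replacement by $S^{**}$ we may assume $S$ is reflexive. To extend across $D$, I let $j:X\hookrightarrow\olsi X$ and take $\olsi S=(j_*S\cap\olsi E)^{**}$, a coherent reflexive subsheaf of $\olsi E$ whose restriction to $X$ is $S$. The substantive content is to prove that $\olsi S|_D$ is locally free and splits off $\olsi E|_D$. The plan is to exploit the $L^2$ bound on $\pp\pi$ and the explicit asymptotic model (\ref{model metric expansion})-(\ref{estimate for 1-forms}) to show that, pulled back via $\Phi$, the restrictions $\pi|_{\{t=t_k\}}$ converge as $t_k\to\infty$ to a limit $\pi_\infty$ on $\olsi E|_D$. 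Concretely, the exponentially thin transverse volume element forces the $d\bar w$-part of $\pp\pi$ to be $L^2$-small on most level sets, and averaging $\pi$ over the $S^1$-fibres of $p:\mathcal C\to D$ should produce a $\pp_D$-closed projection $\pi_\infty$ on $\olsi E|_D$. This $\pi_\infty$ then defines a holomorphic subbundle which, by uniqueness of the PHYM splitting for the $c_1(N_D)$-polystable bundle $\olsi E|_D$, coincides with $\pi^{H_D}_{\olsi S|_D}$; this identification also furnishes the required splitting.

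The main obstacle I anticipate is upgrading the $L^2$ bound on $\pp\pi$, together with the pointwise bound $|\pi|\leq 1$, into smoothness and genuine convergence of $\Phi^*\pi$ on slices as $t\to\infty$. To close this step I expect one must combine the Bando-Siu interior estimate (Theorem \ref{bando-siu interior estimate}) applied on geodesic balls near infinity (whose geometry, by (\ref{closeness of metric tensor}), is uniformly controlled), the weighted mean-value inequality of Lemma \ref{weighted sobolev inequality} applied to $|\pp\pi|^2$, and the specific decay of the transverse direction in (\ref{estimate for 1-forms}) which forces the normal part of $\pp\pi$ to vanish in the limit. Once $\pi_\infty$ is produced, the PHYM property of $\olsi H_0|_D$ and Definition \ref{definition of pair stability} make the identification $\pi_\infty=\pi^{H_D}_{\olsi S|_D}$ routine, completing (2).
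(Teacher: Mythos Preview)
Your argument for part (1) is correct and essentially identical to the paper's: conformal invariance gives $\pi^{H_0}_S=\pi^{\olsi H_0}_S$, the splitting hypothesis forces the tangential components of $\pp\pi$ to vanish along $D$, and then the explicit estimates (\ref{estimate for 1-forms}) give exponential decay of $|\pp\pi|_\omega$, which is more than enough for $L^2$.

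For part (2), however, your route is considerably more complicated than the paper's and contains real gaps. The paper's key observation, which you are missing, is that the hypothesis $\pp\pi\in L^2(X;\omega,H_0)$ \emph{directly implies} $\pp\pi\in L^2(\olsi X;\omega_{\olsi X},\olsi H_0)$ for any smooth K\"ahler metric $\omega_{\olsi X}$ on $\olsi X$. This is a pointwise computation in the model coordinates: writing $\pp\pi=f_i\,d\bar z_i+h\bigl(\frac{d\bar w}{\bar w}-\psi_{\bar z_i}d\bar z_i\bigr)$, the $L^2(\omega)$-condition reads as an integrability statement with weight $\frac{t^{(n-1)(a-1)}}{|w|^2}$ (up to bounded factors), and this weight blows up as $|w|\to 0$. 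Hence $f_i-h\psi_{\bar z_i}$ and $h/\bar w$ are $L^2$ with respect to Lebesgue measure, which is exactly $\pp\pi\in L^2(\olsi X,\omega_{\olsi X})$. Combined with $|\pi|\le 1$ and \cite[Lemma 7.3]{demailly-big-book}, this puts $\pi$ in $W^{1,2}(\olsi X)$, so Uhlenbeck--Yau applies \emph{on the compact manifold} $\olsi X$ to produce $\olsi S$ directly. The same integrability then forces $\pp\pi|_D=0$, which by polystability of $\olsi E|_D$ means $\olsi S|_D$ is a splitting factor.

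By contrast, your plan to apply Uhlenbeck--Yau on $X$ and then extend via $(j_*S\cap\olsi E)^{**}$ requires you to check that $j_*S$ is coherent across the divisor $D$, which you do not address. More seriously, your proposed mechanism for identifying $\olsi S|_D$ --- taking limits of $\pi$ along level sets $\{t=t_k\}$ and invoking Bando--Siu or the weighted mean-value inequality --- does not work as stated: Theorem \ref{bando-siu interior estimate} applies to PHYM metrics, not to weakly holomorphic projections, and Lemma \ref{weighted sobolev inequality} requires a subharmonic-type inequality that $|\pp\pi|^2$ does not obviously satisfy. The paper's approach bypasses all of this by recognizing that the $L^2$ bound already lives on $\olsi X$.
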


\begin{proof} A crucial point here is that $H_0$ is conformal to a smooth extendable metric $\olsi H_0$. In particular, for a coherent subsheaf $S$ of $E$, the projections induced by $H_0$ and $\olsi H_0$ are the same. Note that by \cite[Lemma 3.23 and Remark 3.25]{chen-sun}, for every coherent reflexive subsheaf $\olsi{S}$ of $\olsi{E}$, $\olsi{S}|_D$ is torsion free and can be naturally viewed as a subsheaf of $\olsi{E}|_D$.

	(1) Let $\pi=\pi_{\olsi S}^{\olsi H_0}$. Then  $\pi$ is smooth in a neighborhood of $D$ and $\pp \pi|_D=0$ by assumption. Note that $\pi_S^{H_0}=\pi|_X$, so it suffices to show $\pp \pi \in L^2(X,\omega, \olsi{H}_0)$. Fix small balls $U_i$ of $\olsi X$ covering $D$ such that there are holomorphic coordinates $\{z_1,\cdots,z_{n-1},w\}$ on each $U_i$ with $D\cap U_i=\{w=0\}$ and $\olsi E$ is trivial on each ball $U_i$. Under these coordinates and trivializations we can write $$\pp \pi=\pp_z\pi \dd \bar z+\pp_w\pi \dd \bar w,$$ where we view $\pp_z \pi$ and $\pp_w \pi$ as matrices of smooth functions and $\pp_z \pi|_{w=0}=0$. So we have $|\pp_z\pi|\leq C|w|$ and $|\pp_w\pi|\leq C$. Then the result follows from the explicit estimate given in (\ref{estimate for 1-forms}).
	
	(2) Given a projection map $\pi\in W^{1,2}_{loc}(X,\olsi E^*\otimes\olsi E;\omega,H_0)$ with $ \pp \pi \in L^2(X;\omega,H_0)$, we first prove the following:
	\begin{claim}
		 $\pi\in W^{1,2}(\olsi X;\omega_{\olsi X}, \olsi H_0)$ for a  fixed (hence any) smooth K\"ahler metric $\omega_{\olsi X}$ on $\olsi X$.
	\end{claim}
	
	Since $|\pi|_{\olsi H_0}\leq 1$ and by \cite[Lemma 7.3]{demailly-big-book}, it suffices to show $\pp\pi\in L^2(\olsi X; \omega_{\olsi X}, \olsi H_0)$. By (\ref{complex structure is close}) and (\ref{closeness of metric tensor}), we may assume in local coordinates around $D$ the K\"ahler metric $\omega$ is exactly given by the model space.  We choose local holomorphic coordinates $\underline{z}=\{z_i\}_{i=1}^{n-1}$ on the smooth divisor $D$ and fix a local holomorphic trivialization $e_0$ of $N_D$ with $|e_0|_{h_D}=e^{-\psi}$, where $\psi$ is a smooth function on $D$ satisfying $\ii \partial\pp \psi=\omega_D$. Then we get local holomorphic coordinatea $\{z_1,\cdots,z_{n-1},w\}$ on $\mathcal C$ by writing a point $\xi=we_0(\underline z)$. Choose a basis 
	of $(0,1)$-forms $\dd \bar z_1 \cdots\dd \bar z_{n-1}, \frac{\dd \bar w}{\bar w}-\pp_{z_i}  \psi\dd \bar z_i$. Then we can write
	\begin{equation*}
		 \pp \pi=f_i \dd \bar z_i+h(\frac{\dd \bar w}{\bar w}-\pp_{z_i} \psi \dd \bar z_i), 
	\end{equation*} where $f_i$ and $h$ are sections of $\operatorname {End}(E)$.  Notice that $d\bar z_i$ is perpendicular to the $\frac{\dd \bar w}{\bar w}-\pp_{z_i}  \psi\dd \bar z_i$.
Since $\pp\pi$ is in $L^2$ with respect to $\omega$, by (\ref{estimate for 1-forms}) we know that 
	\begin{equation}\label{integrability}
		\mathlarger\int \left(|f_i|^2(-\log |w|)^{1-a}+|h|^2(-\log |w|)^{2-a}\right)\frac{(-\log |w|)^{(n-1)(a-1)+a-2}}{|w|^2}d\lambda< \infty.
	\end{equation}
	Then we know that $f_i-h\pp_{z_i}\psi,\ \frac{h}{\bar w }$ are all $L^2$-integrable with respect to the Lebesgue measure. Therefore the claim is proved: $$\pp\pi=(f_i-h\pp_{z_i}\psi)\dd \bar z_i+\frac{h}{\bar w}\dd \bar w\in L^2(\olsi X;\omega_{\olsi X},\olsi H_0).$$
	Then Uhlenbeck-Yau's result (Theorem \ref{uhlenbeckyau}) implies that there exists a coherent subsheaf $\olsi S$ of $\olsi E$ such that $\pi=\pi^{\olsi H_0}_{\over S}$ outside the singular set of $\olsi S$. Taking the double dual, we may assume $\olsi S$ is reflexive. By the integrability condition (\ref{integrability}), $\pp \pi_{\over S}^{\olsi H_0}|_D=0$, which means that $\olsi S|_D$ is a splitting factor of $\olsi E|_D$ since $\olsi E|_D$ is polystable.	
\end{proof}

Now we are ready to prove the main theorem. We decompose it into two propositions.
\begin{proposition}\label{only if part}
	Suppose there exists an $\omega$-PHYM metric $H$ in $\mathcal P_{H_0}$, then $\olsi{E}$ is $\left(c_1(D),[\omega_0]\right)$-stable.
\end{proposition}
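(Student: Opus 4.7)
The plan is to fix a coherent reflexive subsheaf $\olsi S$ of $\olsi E$ with $0 < r := \operatorname{rank}\olsi S < \operatorname{rank}\olsi E$ and verify that either (a) or (b) of Definition~\ref{definition of alpha beta stability} holds. The key preliminary observation is purely algebraic: since $c_1(D)$ is represented by the current $[D]$ (Lelong--Poincar\'e), iterating $\int_{\olsi X}\gamma\wedge c_1(D)=\int_D \gamma|_D$ yields, for every coherent sheaf $\olsi F$ on $\olsi X$,
\[\mu_{c_1(D)}(\olsi F)=\mu_{c_1(N_D)}\bigl(\olsi F|_D\bigr),\]
where $\olsi S|_D$ is torsion-free by \cite[Lemma 3.23 and Remark 3.25]{chen-sun}. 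Combined with $c_1(N_D)$-polystability of $\olsi E|_D$, this gives $\mu_{c_1(D)}(\olsi S)\le\mu_{c_1(D)}(\olsi E)$, with equality forcing $\olsi S|_D$ (possibly after passing to the reflexive hull) to be a direct summand of $\olsi E|_D$. Strict inequality is exactly condition (a), so it remains to handle the equality case and establish (b).

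In the equality case $\olsi S|_D$ is a splitting factor, so Lemma~\ref{key observation}(1) gives $\pp\pi_{\olsi S}^{H_0}\in L^2(X;\omega,H_0)$; since $H=H_0 e^s$ with $s$ bounded and trace-free, the $H$-orthogonal projection $\pi:=\pi_{\olsi S}^H$ also satisfies $\pp\pi\in L^2$. Taking the trace of the identity (\ref{curvature form for subhseaf}), integrating against $\omega^{n-1}$, and using the PHYM relation $\Lambda_\omega F_{E,H}=\tfrac{\tr\Lambda_\omega F_{H_0}}{\operatorname{rank}\olsi E}\operatorname{id}$ (the trace of $F_H$ coincides with that of $F_{H_0}$ because $\tr s=0$) produces the Simpson-type identity
\[\int_X \tr F_{S,H}\wedge\omega^{n-1}=\frac{r}{\operatorname{rank}\olsi E}\int_X \tr F_{E,H}\wedge\omega^{n-1}-C\,\|\pp\pi\|_{L^2}^2\]
for some explicit positive constant $C$. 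By Lemma~\ref{degree coincide} the first term on the right equals $\operatorname{rank}\olsi E\cdot\mu_{[\omega_0]}(\olsi E)$ (up to the universal prefactor $\tfrac{2\pi}{\ii}$); the same proof, applied to $\olsi S$ with an analogous initial metric $H_0^S$ built from the splitting of $\olsi E|_D$ and then compared with $H|_S$ by noting that $\log(\det H|_S/\det H_0^S)$ is bounded so that $\int dd^c\log(\det H|_S/\det H_0^S)\wedge\omega^{n-1}=0$ via Lemma~\ref{a more precise degree vanishing} and Remark~\ref{a new 1-form potential}, identifies the left-hand side with $r\cdot\mu_{[\omega_0]}(\olsi S)$. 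Rearranging,
\[\mu_{[\omega_0]}(\olsi S)-\mu_{[\omega_0]}(\olsi E)=-\tfrac{C'}{r}\,\|\pp\pi\|_{L^2}^2\le 0\]
for some $C'>0$. Equality forces $\pp\pi\equiv 0$, making $\pi$ a bounded holomorphic idempotent on $X$; by Riemann's extension theorem across the smooth divisor $D$ it promotes to a holomorphic idempotent on $\olsi X$, producing a nontrivial holomorphic direct-sum decomposition of $\olsi E$ and contradicting its assumed irreducibility. Hence the inequality is strict and (b) holds.

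The main obstacle is the extension of Lemma~\ref{degree coincide} to the reflexive (possibly non-locally-free) sheaf $\olsi S$. One must build $H_0^S$ on $\olsi S$ with controlled asymptotics from the splitting of $\olsi E|_D$, then run the integration-by-parts argument of Lemma~\ref{degree coincide} --- in particular the delicate $k=n-1$ case --- both near $D$ and across the codimension-$\ge 2$ singular locus $\Sigma$ of $\olsi S$ using the cut-off functions constructed at the end of Section~\ref{basics on vector bundle}, verifying that the boundary contributions as $|\xi|_{h_D}\to 0$ vanish because $H_0^S|_D$ is PHYM on $\olsi S|_D$. Everything else is a routine Simpson-style computation once this identification of analytic and algebraic degrees is in place.
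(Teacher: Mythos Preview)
Your overall strategy matches the paper's exactly: reduce to the equality case via polystability of $\olsi E|_D$, use Lemma~\ref{key observation} to get $\pp\pi\in L^2$, apply the Chern--Weil identity with the PHYM condition, identify the resulting analytic degrees with $\mu_{[\omega_0]}$, and rule out equality via irreducibility. Two technical steps, however, are not correctly justified.

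First, the passage from $\pp\pi_S^{H_0}\in L^2$ to $\pp\pi_S^H\in L^2$ requires more than ``$s$ bounded and trace-free''. The paper proves a pointwise inequality $|\pp\pi^H|\le C\bigl(|\pp\pi^{H_0}|+|\pp s|\bigr)$ by computing in a local holomorphic frame adapted to $S$, and then uses that $\|\pp s\|_{L^2}<\infty$ is part of the definition of $\mathcal P_{H_0}$. Boundedness of $s$ alone does not suffice; you need to invoke the $L^2$ bound on $\pp s$ explicitly.

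Second, and more seriously, your appeal to Lemma~\ref{a more precise degree vanishing} to annihilate $\int_X dd^c u\wedge\omega^{n-1}$ with $u=\log\bigl(\det H|_S/\det H_0^S\bigr)$ does not work: that lemma requires the pointwise decay $|\beta|=O(r^{-N_0})$, and a merely bounded function $u$ carries no such information about $|dd^c u|$. The paper instead uses Lemma~\ref{exact integral imply degree 0}: one checks $|\nabla u|\in L^2$ (from $\|\pp s\|_{L^2}<\infty$ and $\|s\|_{L^\infty}<\infty$) and $\Delta u\in L^1$ (it is the difference of two integrable trace terms), and the at-most-quadratic volume growth then forces $\int_X\Delta u\,dV_g=0$. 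Once you replace the lemma, the rest of your argument is correct and coincides with the paper's; your discussion of the singular locus of $\olsi S$ and the need to rerun Lemma~\ref{degree coincide} for $\olsi S$ is in fact more explicit than the paper's own ``by the same argument''.
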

	
\begin{proof}
	Suppose there is a reflexive subsheaf $\olsi S$ of $\olsi E$ with $0< \rank(\olsi S)< \rank (\olsi E)$ such that $\mu(\olsi S,c_1(D))\geq c_1(\olsi E,c_1(D))$, we need to show that $\mu(\olsi S, [\omega_0])<\mu(\olsi E, [\omega_0])$. By \cite{mistretta} for any coherent refelxive sheaf $\mathcal E$ on $\olsi X$, we have 	
	\begin{equation*}
		\mu(\mathcal E,c_1(D))=\mu(\mathcal E|_D,c_1(D)|_D)=\mu(\mathcal E|_D,c_1(N_D)).
	\end{equation*} 
	(When $\mathcal E$ is a vector bundle, this follows from the fact that the first Chern class $c_1(D)$ is the Poincar\'e dual of the homology class defined by the divisor $D$. For a general reflexive sheaf, the key point is to show that $c_1(\mathcal E)|_D=c_1(\mathcal E|_D)$ using the fact that $\mathcal E$ is locally free outside an analytic set of (complex) codimension at least 3.)
	Therefore we have
	\begin{equation}\label{degree exceeding}
			\mu(\olsi S|_D,c_1(N_D))\geq \mu(\olsi E|_D,c_1(N_D)).
	\end{equation}
 By assumption, $\olsi E|_D$ is $c_1(N_D)$-polystable, so (\ref{degree exceeding}) implies that $\olsi S|_D$ is locally free and is a splitting factor of $\olsi E|_D$. Then by Lemma \ref{key observation}, we have 
	\begin{equation*}
		\pp\pi_S^{H_0}\in L^2(X,\omega, H_0).
	\end{equation*}

	\begin{claim}
		$\pp\pi_S^H\in L^2(X,\omega,H_0)=L^2(X,\omega,H)$.
	\end{claim}   For simplicity of notation, in the following we omit the dependence on $S$. By the definition of $\mathcal P_{H_0}$ and $H\in \mathcal P_{H_0}$, we know that $H=H_0 e^s$ with $\left\Vert s\right\Vert_{L^{\infty}}+\left\Vert \pp s\right\Vert_{L^2}<\infty$. The claim follows directly from the following pointwise inequality (outside the singular set $\Sigma$ of $S$) 
	\begin{equation}\label{projection with respect to different metrics}
		|\pp\pi^H|\leq C \left(|\pp s|+|\pp \pi^{H_0}|\right),
	\end{equation} where $C$ is a constant independent of points and all the norms are with respect to $H_0$. Let $r_0$, $r$ denote the rank of $S$ and $E$ respectively. Near any given point $p\in X\backslash \Sigma$, we can find a local holomorphic basis $\{e_1,\cdots,e_{r_0},e_{r_0+1},\cdots,e_r\}$ of $E$ such that

	\begin{equation*}
	\begin{aligned}
		&S=Span\{e_1,\cdots,e_{r_0}\},\\
&\left<e_i,e_j\right>_{H_0}(p)=\delta_{ij},\\
&\pp \left<e_i,e_j\right>_{H_0}(p)=0\  \text{for $1\leq i,j\leq r_0$ and $r_0+1\leq i,j\leq r$ }.
	\end{aligned}
	\end{equation*}
In the following we use Einstein summation convention and use $i,j$ to denote numbers from 1 to $r$, $\alpha,\beta$ to denote numbers from 1 to $r_0$. Under this basis $\pi^{H_0}$ can be written as $$e_{\alpha}^{\vee}\otimes e_{\alpha}+H_{0,i\beta}\widetilde H_0^{\beta\alpha}e_i^{\vee}\otimes e_{\alpha},$$ where we view $H_0=(H_{0,ij})=(\left<e_i,e_j\right>_{H_0})$ as a matrix and $\widetilde H_0=(\widetilde H_{0,\alpha\beta})=(\left<e_{\alpha},e_{\beta}\right>_{H_0})$ as a submatrix of $H_0$. Then \begin{equation*}
	|\pp\pi^{H_0}|(p)=\sum_{i,\alpha}|\pp H_{0,i\alpha}|(p).
\end{equation*}
	Similarly, $\pi^H$ can be written as $e_{\alpha}^{\vee}\otimes e_{\alpha}+H_{i\beta}\widetilde H^{\beta\alpha}e_i^{\vee}\otimes e_{\alpha}$. Note that as a matrix $H=H_0h$, where $h$ is the matrix representation of $e^s$ under the basis $\{e_i\}_{i=1}^r$. Since $\left\Vert s\right\Vert_{L^{\infty}}<\infty$, we have 
	\begin{equation*}
	\begin{aligned}
		|\pp\pi^H|(p)&\leq C\left(\sum|\pp H_{i\alpha}|(p)+|\pp h_{ij}|(p))\right)\\
		&\leq C\left(\sum |\pp H_{0,i\alpha}|(p)+|\pp h_{ij}|(p)\right)\\
		&\leq C\left(|\pp\pi^{H_0}|(p)+|\pp s|(p)\right), 
	\end{aligned}
	\end{equation*}which gives (\ref{projection with respect to different metrics}).

Let $\pi=\pi^H_S$. Using the Chern-Weil formula and the fact that $H\in \mathcal P_{H_0}$ is PHYM, we have 
\begin{equation*}
\begin{aligned}
	 \Lambda_{\omega}\tr(F_{S,H})&=\Lambda_{\omega}\tr(F_{E,H}\circ\pi)-|\pp\pi|^2\\
	 &=\frac{\rank (S)}{\rank (E)}\Lambda_{\omega}\tr(F_{E,H_0})-|\pp\pi|^2,
\end{aligned}
\end{equation*} 
and consequently is $L^1$. 

\begin{claim}
	\begin{equation*}
	\frac{1}{\rank (S)}\int_X\tr(F_{S,H})\wedge \omega^{n-1}=\mu(\olsi S,{[\omega_0]}).
\end{equation*}
\end{claim}

Assume this for a moment, then by Lemma \ref{degree coincide}, we know that 
\begin{equation*}
	\mu(\olsi S,[\omega_0])\leq \mu(\olsi E,[\omega_0]),
\end{equation*}
and equality holds if and only if $\pp\pi =0$. Suppose $\pp \pi=0$. Since $\left|\pi\right|_{\olsi H_0}=\left|\pi\right|_{H_0}\leq C\left|\pi\right|_H\leq C$. Again by  \cite[Lemma 7.3]{demailly-big-book}, there is a global holomorphic section of $\operatorname {End}(\olsi E)$, which is still denoted by $\pi$, such that $\pi=\pi^H_S$ a.e. and  $\pi^2=\pi$. Note that since $\rank (\pi)=\tr(\pi)$ is real valued and holomorphic, it follows that $\rank \pi$ is a constant. Thus $\olsi E$ holomorphically splits as the direct sum of $\ker \pi$ and $\Im \pi$, which contradict with our assumption that $\olsi E$ is irreducible. Therefore we prove that
	\begin{equation*}
	\mu(\olsi S,{[\omega_0]})< \mu(\olsi E,{[\omega_0]}).
\end{equation*}

\noindent \textit{Proof of the} \textbf{claim}: since $H\in \mathcal P_{H_0}$, we have 
\begin{equation*}
	\tr(F_{S,H})-\tr(F_{S,H_0})=\partial \pp u,
\end{equation*}
for a bounded real valued smooth function $u$ with $|\nabla u|\in L^2$. By Lemma \ref{exact integral imply degree 0},
\begin{equation*}
	\int\tr(F_{S,H})\wedge\omega^{n-1}=\int\tr(F_{S,H_0})\wedge\omega^{n-1}.
\end{equation*}
By the same argument in Lemma \ref{degree coincide}, we can show 
\begin{equation*}
	\int\tr(F_{S,H_0})\wedge\omega^{n-1}=\mu(\olsi S,{[\omega_0]}).
\end{equation*}
Hence we complete the proof of the claim.
\end{proof}

\begin{proposition}\label{existence result}
	Let $H_0$ be the metric constructed in Lemma \ref{good initial metric}. Suppose $\olsi E$ is $\left(c_1(D),{[\omega_0]}\right)$-stable, then there exists a unique $\omega$-PHYM metric $H$ in $\mathcal P_{H_0}$.
\end{proposition}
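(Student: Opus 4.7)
The plan is to follow the continuity/exhaustion scheme of Simpson and Mochizuki, adapted to the present geometric setting by using the weighted mean value inequality (Lemma \ref{weighted sobolev inequality}), the degree formula (Lemma \ref{degree coincide}), and the correspondence between weakly holomorphic projections on $X$ and coherent subsheaves of $\olsi E$ (Lemma \ref{key observation}). First I would exhaust $X$ by a sequence of smooth compact domains $Z_i \subset X$ with $\bigcup_i Z_i = X$, and on each $(Z_i,\omega)$ invoke the Dirichlet problem of Donaldson-Mochizuki to produce the unique Hermitian metric $H_i = H_0 e^{s_i}$ on $\olsi E|_{Z_i}$ with $\tr(s_i)=0$, $H_i|_{\partial Z_i} = H_0|_{\partial Z_i}$, and $\ii \Lambda_\omega F^\perp_{H_i}=0$.

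The main obstacle, and the heart of the proof, is the uniform $C^0$-estimate $\|s_i\|_{L^\infty(Z_i)} \leq C$ independent of $i$. I would argue by contradiction: set $l_i := \|s_i\|_{L^\infty}$, suppose $l_i\to\infty$, and put $u_i := s_i/l_i$, so $\|u_i\|_{L^\infty}\leq 1$ and $\tr(u_i)=0$. From Lemma \ref{basic differential inequlaties}(3) together with the decay $|\Lambda_\omega F_{H_0}|=O(r^{-N_0})$, the function $|s_i|_{H_0}$ is almost subharmonic with right-hand side of order $r^{-N_0}$; the weighted mean value inequality (Lemma \ref{weighted sobolev inequality}), whose hypotheses are exactly the $(K,\alpha,\beta)$-polynomial growth established in Lemma \ref{vanishing at infinity decay}, then yields
\begin{equation*}
l_i \;\leq\; C_1\int_X (1+r)^{-N}|s_i|\,dV_g + C_2,
\end{equation*}
so after normalization $\int_X(1+r)^{-N}|u_i|\geq c>0$. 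Next, following Simpson, I would exploit the inequality $\mathcal M_\omega(H_0,H_i)\leq 0$ from Theorem \ref{phym metrics minimizing donaldson functional} together with the identity
\begin{equation*}
l_i^{-1}\mathcal M_\omega(H_0,H_i) \;=\; \ii\int \tr(u_i\, \Lambda_\omega F_{H_0})\omega^n + l_i\int \langle \Psi(l_i u_i)(\pp u_i),\pp u_i\rangle_{H_0}\omega^n,
\end{equation*}
to obtain a uniform $L^2$-bound on $\pp u_i$ and, after passing to a subsequence, a weak $W^{1,2}_{loc}$-limit $u_\infty\in L^\infty$. Arguing as in Simpson's Lemma 5.4 (with $\Psi(l_i\cdot,l_i\cdot)\to \Phi_\infty$), the eigenvalue projections of $u_\infty$ produce a weakly holomorphic projection $\pi \in W^{1,2}_{loc}(X,\operatorname{End}\olsi E;\omega,H_0)$ with $\pp\pi\in L^2$ which is nontrivial thanks to the weighted $L^1$ lower bound.

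Now Lemma \ref{key observation}(2) promotes $\pi$ to a coherent reflexive subsheaf $\olsi S\subsetneq \olsi E$ with $\olsi S|_D$ a splitting factor of $\olsi E|_D$, so $\mu_{c_1(N_D)}(\olsi S|_D)=\mu_{c_1(N_D)}(\olsi E|_D)$ and hence $\mu_{c_1(D)}(\olsi S)=\mu_{c_1(D)}(\olsi E)$ by Mistretta's identity. A Chern-Weil computation on $X$, combined with $\Lambda_\omega F^\perp_{H_i}=0$, gives
\begin{equation*}
\deg_\omega^{an}(\olsi S,H_i) \;\leq\; \tfrac{\rank \olsi S}{\rank \olsi E}\deg_\omega^{an}(\olsi E,H_0),
\end{equation*}
and following Simpson's accounting of the blow-up one concludes $\mu_{[\omega_0]}(\olsi S)\geq \mu_{[\omega_0]}(\olsi E)$, where the passage from analytic to algebraic degree uses Lemma \ref{degree coincide} applied to both $\olsi E$ and $\olsi S$ (the latter justified by the $L^2$ bound on $\pp\pi$ and the argument from Proposition \ref{only if part}). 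This contradicts $(c_1(D),[\omega_0])$-stability and establishes the $C^0$-bound.

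With $\|s_i\|_{L^\infty}$ uniformly bounded, the Bando-Siu interior estimate (Theorem \ref{bando-siu interior estimate}) supplies uniform $C^k_{loc}$-bounds on $s_i$, and a diagonal subsequence converges smoothly on compact sets to a PHYM metric $H=H_0 e^{s_\infty}$ with $\|s_\infty\|_{L^\infty}<\infty$ and $\det H=\det H_0$. To place $H$ in $\mathcal P_{H_0}$ I would combine the bound on the Donaldson functional with the identity above to extract a uniform $L^2$-bound on $\pp s_i$, yielding $\|\pp s_\infty\|_{L^2}<\infty$. Uniqueness of the PHYM metric in $\mathcal P_{H_0}$ is then standard: given two such metrics $H,H'=He^s$ with $\tr s=0$ and $\Lambda_\omega F^\perp_{H'}=\Lambda_\omega F^\perp_H=0$, Lemma \ref{basic differential inequlaties}(2) and a cut-off/integration-by-parts argument using $\|s\|_{L^\infty}+\|\pp s\|_{L^2}<\infty$ force $\pp s=0$; irreducibility of $\olsi E$ (via the splitting argument in Proposition \ref{only if part}) then gives $s=0$.
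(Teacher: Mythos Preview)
Your proposal follows the same Simpson--Mochizuki exhaustion scheme as the paper, and the skeleton is correct: solve Dirichlet problems on $Z_i$, prove a uniform $C^0$-bound by contradiction via the weighted mean value inequality, extract a weakly holomorphic projection from the blow-up limit $u_\infty$, promote it to a subsheaf of $\olsi E$ via Lemma \ref{key observation}(2), and contradict $(c_1(D),[\omega_0])$-stability using Lemma \ref{degree coincide}. The uniqueness argument and the passage to higher regularity via Bando--Siu are also as in the paper.

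There is one genuine muddle. In the contradiction step you write that Chern--Weil with $\Lambda_\omega F^\perp_{H_i}=0$ gives
\[
\deg_\omega^{an}(\olsi S,H_i)\;\leq\;\tfrac{\rank \olsi S}{\rank \olsi E}\deg_\omega^{an}(\olsi E,H_0),
\]
and then ``following Simpson'' conclude $\mu_{[\omega_0]}(\olsi S)\geq \mu_{[\omega_0]}(\olsi E)$. The displayed $\leq$-inequality is the \emph{only if} direction (it is how a PHYM metric forces semistability) and is irrelevant here; moreover $H_i$ lives only on $Z_i$ and $\olsi S$ is produced from the limit $u_\infty$, so the sentence as written does not parse. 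What actually yields the destabilizing inequality is Simpson's analysis of the blow-up of the Donaldson functional: from $\mathcal M_\omega(H_0,H_i)\leq 0$ one passes to the limit inequality \eqref{inequality for limits}, the eigenvalues of $u_\infty$ are constant and not all equal, and one obtains (Lemma \ref{degree exceed}) a projection $\pi_\gamma$ with
\[
\frac{1}{\tr(\pi_\gamma)}\Bigl(\ii\!\int_X\tr(\pi_\gamma\Lambda F_{H_0})-\int_X|\pp\pi_\gamma|^2\Bigr)\;\geq\;\frac{1}{\rank(E)}\ii\!\int_X\tr(\Lambda F_{H_0}).
\]
This, together with Lemma \ref{degree coincide} (and its analogue for $\olsi S$, proved as in the Claim of Proposition \ref{only if part}), gives $\mu_{[\omega_0]}(\olsi S)\geq\mu_{[\omega_0]}(\olsi E)$. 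You should replace the misplaced $\leq$-line with this step.

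Two minor differences from the paper worth noting. First, the paper normalizes by the weighted $L^1$-norm $l_i=\int_{X_i}|s_i|(1+r)^{-N_0}$ rather than $\|s_i\|_{L^\infty}$; both work, since the mean value inequality makes them comparable. Second, for $\pp s_\infty\in L^2$ the paper uses Mochizuki's direct argument: integrate $\Delta\tr(h_i)=-\tr(h_i\ii\Lambda F_{H_0}^\perp)-|h_i^{1/2}\pp h_i|^2$ over $X_i$ and use $\nabla_\nu\tr(h_i)\leq 0$ on $\partial X_i$. Your route through the Donaldson functional also works once $\|s_i\|_{L^\infty}\leq C$ is known, since then $\Psi(s_i)\geq c_0>0$ pointwise.
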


\begin{proof}
	Uniqueness is obvious. Suppose we have two $\omega$-PHYM metrics $H_1$, $H_2\in \mathcal P_{H_0}$. Let $h=H_1^{-1}H_2$. By the definition of $\mathcal P_{H_0}$, we know that $\det h=1$ and  $h$ is both bounded from above and below and $|\pp h| \in L^2$. Then by taking the trace of the differential equality in Lemma \ref{basic differential inequlaties}-(2), we get
	\begin{equation*}
		\Delta_{\omega}\tr(h)=-|\pp hh^{-\frac{1}{2}}|^2.
	\end{equation*}
By Lemma \ref{exact integral imply degree 0}, 
$$\int|\pp h h^{-\frac{1}{2}}|^2=-\int\Delta_{\omega} \tr(h)=0.$$ Therefore $\pp h=0$ and since $h$ is self-adjoint with respect to $H_i$, it is parallel with respect to the Chern connection determined by $(\pp,H_i)$. Then its eigenspaces give a holomorphic decomposition of $\olsi E$ which contradicts the assumption that $\olsi E$ is irrducible unless $h$ is a multiple of identity map. Since $\det h=1$, it must be that $h$ is the identity map, i.e. $H_1=H_2$.
	
	For the existence part, we follow Simpson and Mochizuki's argument \cite{simpson,mochizuki}. For completeness, we include some details. Let $\{X_i\}$ be an exhaustion of $X$ by compact domains with smooth boundary and we solve Dirichlet problems on every $X_i$ using Donaldson's theorem (Theorem \ref{donaldson dirichlet}). Then we have a sequence of PHYM metrics $H_i$ on $E|_{X_i}$ such that $H_i|_{\partial X_i}=H_0|_{\partial X_i}$ and $\det H_i=\det H_0$. Let $s_i$ be the endomorphism determined by $H_i=H_0h_i=H_0e^{s_i}$. Then we have $s_i|_{\partial X_i}=0$ and $\tr(s_i)=0$.
	
	 We argue by contradiction to prove a uniform $C^0$-estimate for $s_i$. First note that by Lemma \ref{basic differential inequlaties},
 $e^{s_i}$ satisfies the elliptic differential inequality 
	 \begin{equation}
	 	 \Delta\log(\tr(e^{s_i}))\leq |\Lambda F_{H_0}^{\perp}|
	 \end{equation}	 
	 therefore $\tr(e^{s_i})$ satisfies the weighted mean value inequality in  Lemma \ref{weighted sobolev inequality}. Since $\tr(e^{s_i})$ and $|s_i|$ are mutually bounded, we know that $|s_i|$ also satisfies the weighted mean value inequality (\ref{mean value inequality}). Lemma \ref{weighted sobolev inequality} plays an essential role since it ensures that after normalization we can have a nontrivial limit in $W^{1,2}_{loc}$.
	Suppose there is a sequence $s_i$ such that $\sup_{X_i}| s_i|_{H_0}\rightarrow \infty$ as $i\rightarrow \infty$. Then by Lemma \ref{weighted sobolev inequality}, we obtain $$l_i=\int_{X_i}|s_i|_{H_0}(1+r)^{-N_0}\rightarrow\infty.$$ Let $u_i=l_i^{-1}s_i$. Then by Lemma \ref{weighted sobolev inequality} again we obtain there is a constant $C$ independent of $i$ such that 
	\begin{equation}\label{nonzero condition}
		\text{$\int_{X_i}|u_i|(1+r)^{-N_0}=1$ and $|u_i|\leq C$},
	\end{equation}
	where the norms are with respect to the back ground metric $H_0$. Then following Simpson's argument, we can show that 
	  
\begin{lemma}
	After passing to a subsequence, $u_i$ converge weakly in $W^{1,2}_{loc}$ to a nonzero limit $u_{\infty}$. The limit $u_{\infty}$ satisfies the following property: if $\Phi: \mathbb{R} \times \mathbb{R} \rightarrow \mathbb{R}$ is a positive smooth function such that $\Phi\left(\lambda_{1}, \lambda_{2}\right)<\left(\lambda_{1}-\lambda_{2}\right)^{-1}$ whenever $\lambda_{1}>\lambda_{2}$, then
\begin{equation}\label{inequality for limits}
	\sqrt{-1} \int_{X} \tr(u_{\infty} \Lambda F_{H_0})+\int_{X}\left<\Phi\left(u_{\infty}\right)\left(\pp  u_{\infty}\right), \pp u_{\infty}\right>_{H_0} \leq 0.
\end{equation}

\end{lemma}    
	  \begin{proof}By Theorem \ref{phym metrics minimizing donaldson functional}
	  \begin{equation}\label{inequality of X_i}
	  	 \sqrt{-1} \int_{X_i} \tr(u_{i} \Lambda F_{H_0})+ l_{i} \int_{X_i}\left<\Psi\left(l_{i} u_{i}\right)\left(\pp  u_{i}\right), \pp u_{i}\right>_{H_0} \leq 0. 
	  	 \end{equation}
	  By the definition of $\Psi$ in (\ref{definition of Phi}), we know that as $l \rightarrow \infty, l \Psi\left(l \lambda_{1}, l \lambda_{2}\right)$ increases monotonically to $\left(\lambda_{1}-\lambda_{2}\right)^{-1}$ if $\lambda_{1}>\lambda_{2}$ and $\infty$ if $\lambda_{1} \leq \lambda_{2}$. Fix a $\Phi$ as in the statement of the lemma. We know that for all $A>0$ there exists $l_A$ such that if $|\lambda_i|\leq A$ and $l>l_A$, then we have
	  \begin{equation}\label{property for functions}
	  	\Phi\left(\lambda_{1}, \lambda_{2}\right)<l \Psi\left(l \lambda_{1}, l \lambda_{2}\right).
	  \end{equation}   
	    Since sup $\left|u_{i}\right|$ are bounded, its eigenvalues are also bounded. Then by (\ref{inequality of X_i}) and (\ref{property for functions}), we obtain that for $i$ sufficiently large
	       \begin{equation}\label{inequality for u_i}
	    	  	 \sqrt{-1} \int_{X_i} \tr(u_{i} \Lambda F_{H_0})+  \int_{X_i}\left<\Phi\left(u_{i}\right)\left(\pp  u_{i}\right), \pp u_{i}\right>_{H_0} \leq 0.	 
	    \end{equation}

Again since $\sup|u_i|$ is bounded we can find $\Phi$ satisfying the assumption in the lemma and $\Phi(u_i)=c_0$ for all $i$, where $c_0$ a fixed small positive number. Then by (\ref{inequality for u_i}) and the construction of $H_0$, there exists a positive constant $C$ such that  
$$\int_{X_i}|\pp u_i|^2\leq C.$$ Therefore by a diagonal sequence argument and after passing to a subsequence we may assume $u_i$ converge weakly in $W^{1,2}_{loc}$ to a limits $u_{\infty}$ with $\int_X|\pp u_{\infty}|^2\leq C$. We claim that $u_{\infty}\neq 0$. Indeed by (\ref{nonzero condition}), there exists a compact set $K\subseteq X$ independent of $i$ such that 
\begin{equation*}
	\int_{K}|u_i|(1+r)^{-N}\geq \frac{1}{2}.
\end{equation*}
Since on compact sets the embedding from $W^{1,2}$ to $L^1$ is compact, after taking the limit, we get $\int_K|u_{\infty}|(1+r)^{-N}\geq \frac{1}{2}$. In particular $u_{\infty}\neq 0$.

Next we prove (\ref{inequality for limits}). By the uniform boundedness of $u_i$, the $O(r^{-N_0})$ decay property of $|\Lambda F_{H_0}|$ and the nonnegativity of  the second term of the left hand side in (\ref{inequality for u_i}), we know that there exists $\epsilon_i\rightarrow 0$ such that for any $j\geq i$, we have
\begin{equation*}
	 \sqrt{-1} \int_{X_i} \tr(u_{j} \Lambda F_{H_0})+  \int_{X_i}\left<\Phi\left(u_{j}\right)\left(\pp  u_{j}\right), \pp u_{j}\right>_{H_0} \leq \epsilon_i.	 
\end{equation*}
Note that $\left<\Phi\left(u_{j}\right)\left(\pp  u_{j}\right), \pp u_{j}\right>_{H_0}=|\Phi^{\frac{1}{2}}(u_j)(\pp u_j)|_{H_0}^2.$ By \cite[Proposition 4.1]{simpson}, we know that on each fixed $X_i$, $\Phi^{\frac{1}{2}}(u_j)\rightarrow \Phi^{\frac{1}{2}}(u_{\infty})$ in $\operatorname{Hom}\left(L^{2}, L^{q}\right) \text { for any } q<2$.
Since $\pp u_j$ converge weakly in $L^2(X_i)$ to $\pp u_{\infty}$, we obtain that $\Phi^{\frac{1}{2}}(u_j)(\pp u_j)$ converge weakly to $\Phi^{\frac{1}{2}}(u_{\infty})(\pp u_{\infty})$ in $L^q(X_i)$ for any $q<2$. Then we know that for any $q<2$, 
\begin{equation*}
\begin{aligned}
	\left\Vert\Phi^{\frac{1}{2}}(u_{\infty})(u_{\infty})\right\Vert_{L^q(X_i)}^2&\leq \liminf_{j\rightarrow\infty}\left\Vert\Phi^{\frac{1}{2}}(u_{j})(u_{j})\right\Vert_{L^q(X_i)}^2\\
&\leq\operatorname{Vol}(X_i)^{\frac{2}{q}-1}\liminf_{j\rightarrow \infty}\left\Vert\Phi^{\frac{1}{2}}(u_{j})(u_{j})\right\Vert_{L^2(X_i)}^2\\
&\leq\operatorname{Vol}(X_i)^{\frac{2}{q}-1}\left(\epsilon_i-\lim_{j\rightarrow \infty}\ii\int_{X_i}\tr(u_j\Lambda F_{H_0})\right)\\
&\leq \operatorname{Vol}(X_i)^{\frac{2}{q}-1}\left(\epsilon_i-\ii\int_{X_i}\tr(u_{\infty}\Lambda F_{H_0})\right).
\end{aligned}
	\end{equation*}
Let $q\rightarrow2$, we obtain
\begin{equation*}
	\ii\int_{X_i}\tr(u_{\infty}\Lambda F_{H_0})+\left\Vert\Phi^{\frac{1}{2}}(u_{\infty})(u_{\infty})\right\Vert_{L^2(X_i)}^2\leq \epsilon_i.
\end{equation*}
	 Letting $i\rightarrow\infty$, the inequality (\ref{inequality for limits}) is proved.	
\end{proof}

Simpson's argument in \cite[Lemma 5.5 and Lemma 5.6]{simpson} can be applied verbatim to the infinite volume case, so we have
\begin{lemma}[{\cite{simpson}}] Let $u_{\infty}$ be a limit obtained in the previous lemma. Then we have 
\begin{itemize}
	\item [(1)]The eigenvalues of $u_{\infty}$ are constant and not all equal.
	\item [(2)]Let $\Phi: \mathbb{R} \times \mathbb{R} \longrightarrow(0, \infty)$ be a $C^{\infty}$-function such that $\Phi\left(\lambda_{i}, \lambda_{j}\right)=0$ if $\lambda_{i}>\lambda_{j}$. Then $\Phi\left(u_{\infty}\right)\left(\bar{\partial} u_{\infty}\right)=0$.
\end{itemize}
\end{lemma}

Let $\lambda_1 \leq \lambda_2\leq \cdots\leq \lambda_{\rank(E)}$ denote the eigenvalues of $u_{\infty}$. Let $\gamma$ be an open interval between the eigenvalues (since eigenvalues of $u_{\infty}$ are not all equal by the previous lemma, there exists such a nonempty interval). We choose a $C^{\infty}$-function $p_{\gamma}: \mathbb{R} \longrightarrow(0, \infty)$ such that $p_{\gamma}\left(\lambda_{i}\right)=1$ if $\lambda_{i}<\gamma$, and $p_{\gamma}\left(\lambda_{i}\right)=0$ if $\lambda_{i}>\gamma$. Set $\pi_{\gamma}:=p_{\gamma}\left(u_{\infty}\right)$, see Section \ref{definition of new endmorphisms} for the definition. Then one can easily show that \cite{simpson, mochizuki}
	\begin{itemize}
		\item [(1)] $\pi_{\gamma}^2=\pi_{\gamma}$, $(\operatorname{id}-\pi_{\gamma})\circ\pi_{\gamma} =0$ and $\pi_{\gamma}$ is self-adjoint with respect to $H_0$.
		\item [(2)] $\int_X|\pp\pi_{\gamma}|^2<\infty$.
\end{itemize}
Moreover using (\ref{inequality for limits}), Simpson proved that 
\begin{lemma}[{\cite{simpson}}]\label{degree exceed}
	There exists at least one $\gamma$ such that 
	\begin{equation*}
		\frac{1}{\tr(\pi_{\gamma})}\left(\ii\int_X\tr(\pi_{\gamma}\Lambda F_{H_0})-\int_X|\pp\pi_{\gamma}|^2\right)\geq \frac{1}{\rank(E)}\ii\int_X\tr(\Lambda F_{H_0}).
	\end{equation*}

\end{lemma}

	 By Lemma \ref{key observation}, we get a filtration of $\olsi E$ by coherent reflexive subsheaves $\olsi S_i$ whose restrictions to $D$ are splitting factors of $\olsi E|_D$. Since we assume that $\olsi{E}|_D$ is $c_1(N_D)$-polystable, we know that for every $i$ 
	 \begin{equation*}
	 	\mu(\olsi S_i|_D,c_1(N_D))=\mu(\olsi E|_D, c_1(N_D)).
	 \end{equation*}
	 Then again by \cite{mistretta}, we have 
	\begin{equation}\label{first degree coincides}
		\mu(\olsi S_i,c_1(D))=\mu(\olsi E, c_1(D)).
	\end{equation}
	Note that Lemma \ref{degree exceed} is equivalent to the statement that there exists at least one $\olsi S_i$ such that 
	\begin{equation*}
		\int_X\tr(F_{S_i,H_0})\wedge\omega^{n-1}\geq \int_X\tr(F_{E,H_0})\wedge\omega^{n-1}.	\end{equation*}
Then by Lemma \ref{degree coincide}, 
	\begin{equation}
			\mu(\olsi S_i,{[\omega_0]})\geq \mu(\olsi E,{[\omega_0]}).
	\end{equation}
which contradicts with the $(c_1(D),{[\omega_0]})$-stability assumption. Therefore we do have a uniform $C^0$-estimate for $s_i$. 
	
	Bando-Siu's interior regularity result Theorem \ref{bando-siu interior estimate} can be applied to get local uniform estimate for all derivatives of $s_i$. Then we can take limits to get a smooth section $s\in \operatorname{End}( E)$, which is self-adjoint with respect to $H_0$ and $\tr(s)=0$ and more importantly 
	\begin{equation*}
		\Vert s\Vert_{L^{\infty}}<\infty\ \text{and $H=H_0\rm e$$^{s}$ is a PHYM metric}.
	\end{equation*}
	Then we use Mochizuki's argument in \cite[Section 2.8]{mochizuki} to show that 
	\begin{equation*}
		|\pp s|\in L^2(X,\omega, H_0).
	\end{equation*}
	 Indeed taking the trace of the equality in Lemma \ref{basic differential inequlaties}-(2) and noting that $H_i=H_0h_i$ is PHYM, we have 
	\begin{equation}\label{to show L^2}
		\Delta \tr(h_i)=-\tr(h_i\ii\Lambda F_{H_0}^{\perp})-|h_i^{\frac{1}{2}}\pp(h_i)|^2.
	\end{equation}
	Since $\det h_i=1$ and $h_i|_{\partial X_i}=\operatorname{id}$, we know that $\nabla_{\nu_i}\tr(h_i)\leq 0$, where $\nu_i$ denotes the outward unit normal vector of $\partial X_i$. Integrating (\ref{to show L^2}) over $X_i$ and using Stoke's theorem in the left hand side, we obtain
	\begin{equation*}
		\int_{X_i}|h_i^{\frac{1}{2}}\pp(h_i)|^2\leq -\int_{X_i}\tr(h_i\ii\Lambda F_{H_0}^{\perp}).
	\end{equation*}
	Since we have uniform $C^0$-estimate for $s_i=\log h_i$, there exist constants $C_1$ and $C_2$ independent of $i$ such that 
	\begin{equation*}
		\int_{X_i}|\pp s_i|^2\leq C_1\int_{X_i}|\pp h_i|^2\leq C_2
	\end{equation*}
	Let $i\rightarrow \infty$, we have $\int_X|\pp s|^2\leq C_2$.
	\end{proof}

\noindent\textit{On the stability condition.} Note that global semistability is known \cite{mistretta}, if we assume the restriction to $D$ is semistable. There do exist irreducible holomorphic vector bundles which are polystable when restricted to $D$ but not globally stable, even under more restrictive assumptions that $\olsi{X}$ is Fano and $D\in |K_{\olsi{X}}^{-1}|$.
\begin{example}\label{example of nonsplitting}
 Recall that for holomorphic vector bundles $S$, $Q$ over a complex manifold $M$, all exact sequences $0\rightarrow S\rightarrow E\rightarrow Q\rightarrow 0$ of holomorphic vector bundles are classified by elements $\beta\in H^1(M,\operatorname {Hom}(Q,S))$ and in particular the exact sequence splits holomorphically if and only if the corresponding element $\beta=0$. Now taking $M$ to be $\mathbb {CP}^1\times \mathbb {CP}^1$, $D$ to be a smooth anticanonical divisor. Then $c_1(D)$ is a K\"ahler class and $D$ itself is an elliptic curve. Choose $\operatorname {Hom}(Q,S)=L=p_1^*(\mathcal O(2))\otimes p_2^*(\mathcal O(-2))$. Then by the K\"unneth's formula, $\dim H^1(M,L)=3$. Note that $\deg (L|_D)=0$, which by Serre duality implies $\dim H^1(D,L)=\dim H^0(D,L^*)\leq 1$. So there exists a  class $\beta \in H^1(M,L)$ corresponding to a non-splitting exact sequence of holomorphic vector bundles whose restriction to $D$ splits as a direct sum of two line bundles with the same degree. Therefore $E$ itself is not $c_1(D)$-stable but $E|_D$ is $c_1(N_D)$-polystable. Such an $E$ is irreducible, because if $E=L_1 \oplus L_2$,  then $\deg (L_i,c_1(D))=\deg (L_i|_D)=0$ since $E|_D$ is polystable of degree 0, which implies that $S$ has to be one of the $L_i$ and $Q$ is the other one. This contradicts with the construction of $E$.
\end{example}


\section{Discussion} \label{miscellaneous discussion}

\subsection{More results on the existence of PHYM metrics}\label{relation with some results} 
By Donaldson's theorem on the solvability of Dirichlet problem (Theorem \ref{donaldson dirichlet}), the elliptic differential inequality (Lemma \ref{basic differential inequlaties}-(3)), the maximal principle and Bando-Siu's interior estimate (Theorem \ref{bando-siu interior estimate}), we get the following well-known existence result.

\begin{theorem}
	
\label{general existence result}Let $(M,\omega,g)$ be a complete K\"ahler manifold, $E$ be a holomorphic vector bundle on $M$. 
	Suppose there exists a smooth hermitian metric $H_0$ on $E$ such that the equation 
	\begin{equation}\label{laplace equaiton}
		\Delta u=|\Lambda F_{H_0}^{\perp}|
	\end{equation}
admits a positive solution $u$. Then there exists a  smooth hermitian metric $H=H_{0}e^s$ satisfying 
\begin{equation*}
	\tr(s)=0,\ |s|_{H_0}\leq C_1 u \text{ and } \Lambda F_H^{\perp}=0.
\end{equation*}
Moreover, if $u$ is bounded and $|\Lambda F_{H_0}|\in L^1$ then $|\pp s|\in L^2$.
\end{theorem}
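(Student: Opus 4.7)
The plan is to mimic the compact-exhaustion Dirichlet argument used in Proposition~\ref{existence result}, with the given $u$ playing the role Simpson's blow-up analysis plays there: it will supply a uniform $C^0$ bound via the maximum principle instead of via contradiction. Let $\{M_i\}$ be an exhaustion of $M$ by relatively compact open subsets with smooth boundary. Donaldson's theorem (Theorem~\ref{donaldson dirichlet}), in the determinant-preserving form recalled in Section~\ref{definition of new endmorphisms}, produces on each $\overline{M_i}$ a unique hermitian metric $H_i = H_0 e^{s_i}$ with $s_i|_{\partial M_i} = 0$, $\tr(s_i) = 0$, and $\Lambda_\omega F_{H_i}^\perp = 0$.

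The central step is the pointwise comparison with $u$. Since $\det(e^{s_i}) = 1$ and $H_i$ is PHYM, the trace-free version of Lemma~\ref{basic differential inequlaties}(3) gives
\begin{equation*}
\Delta \log \tr(e^{s_i}) \,\leq\, 2 \bigl|\Lambda_\omega F_{H_0}^\perp\bigr|.
\end{equation*}
Subtracting $2\Delta u = 2|\Lambda_\omega F_{H_0}^\perp|$ shows that $\log \tr(e^{s_i}) - 2u$ satisfies $\Delta(\cdot) \leq 0$, which in the paper's sign convention means it is subharmonic, so the maximum principle on $\overline{M_i}$ bounds it by its boundary value $\log(\rank E) - 2u|_{\partial M_i}$. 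Using $u > 0$, this gives $\log \tr(e^{s_i}) \leq 2u + \log(\rank E)$ on $M_i$. Because $\tr(s_i) = 0$, the largest eigenvalue of $s_i$ is bounded above by $\log \tr(e^{s_i})$ and controls the other eigenvalues up to a factor depending only on $\rank E$; this converts the inequality into $|s_i|_{H_0} \leq C_1 u$ (after absorbing an additive constant into $u$, which remains a positive solution of the Poisson equation after being shifted by a positive constant).

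With this uniform $C^0$ bound in hand, Bando--Siu's interior regularity (Theorem~\ref{bando-siu interior estimate}) yields locally uniform $C^k$ bounds on $s_i$ for every $k$. A diagonal subsequence therefore converges in $C^\infty_{\mathrm{loc}}$ to a smooth $H_0$-self-adjoint $s$ with $\tr(s)=0$, $|s|_{H_0}\leq C_1 u$, and $H = H_0 e^s$ projectively Hermitian--Yang--Mills. For the last assertion, assume $u$ is bounded and $|\Lambda_\omega F_{H_0}|\in L^1$. Taking the trace of Lemma~\ref{basic differential inequlaties}(2) for $(H_0, H_i)$ and using $\Lambda_\omega F_{H_i}^\perp = 0$ yields a schematic identity
\begin{equation*}
\Delta \tr(h_i) \,=\, -\tr\!\bigl(h_i \cdot i\Lambda_\omega F_{H_0}^\perp\bigr) \,-\, \bigl|\pp h_i \cdot h_i^{-1/2}\bigr|^2_{H_0}, \qquad h_i = e^{s_i}.
\end{equation*}
Integrate over $M_i$ and apply Stokes. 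Because $\det h_i = 1$ and $h_i|_{\partial M_i} = \mathrm{id}$, the AM--GM inequality forces $\tr(h_i) \geq \rank E$ on $M_i$ with equality on the boundary, so the boundary contribution from $\int_{M_i} \Delta \tr(h_i)$ has the sign needed for absorption. Combined with the uniform $C^0$ bound on $s_i$ (which, since $u$ is bounded, also bounds $h_i$) and with $|\Lambda_\omega F_{H_0}|\in L^1$, this produces a uniform $L^2$ bound on $\pp s_i$ over $M_i$; passing to the limit gives $\pp s \in L^2$, as in Mochizuki's argument.

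The main obstacle is the $C^0$ estimate: it is precisely the global positivity of $u$ on all of $M$ that lets a single comparison function bound $s_i$ on every $M_i$ simultaneously. Once that step is secured, the remainder is a fairly standard package of Donaldson's Dirichlet theory, Bando--Siu interior regularity, and integration by parts on each exhausting domain.
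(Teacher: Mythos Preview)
Your proposal is correct and follows exactly the approach the paper sketches: the paper's ``proof'' of Theorem~\ref{general existence result} is only the one sentence preceding it, citing Donaldson's Dirichlet theorem, the differential inequality of Lemma~\ref{basic differential inequlaties}(3), the maximum principle, and Bando--Siu's interior estimate, and you have correctly supplied the details (including the Mochizuki-type integration-by-parts for the $L^2$ bound on $\pp s$, which mirrors the argument at the end of Proposition~\ref{existence result}).
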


There are many examples for which (\ref{laplace equaiton}) has a positive solution and even bounded solutions \cite{bando,ni,ni-shi-tam}. 
\begin{itemize}
	\item [(1)] Suppose $(M,g)$ is asymptotically conical and $|\Lambda F_{H_0}^{\perp}|=O(r^{-2-\epsilon})$ for some $\epsilon>0$, then (\ref{laplace equaiton}) admits a solution $u$ with $|u|=O(r^{-\epsilon})$.
	\item [(2)] Suppose $(M,g)$ is non-parabolic (i.e admits a positive Green's function) and $|\Lambda F_{H_0}^{\perp}|\in L^1$, then (\ref{laplace equaiton}) admits a positive solution.
	\item [(3)] Suppose $(M,g)$ has nonnegative Ricci curvature, $|\Lambda F_{H_0}^{\perp}|=O(r^{-2})$ and 
	\begin{equation*}
		\frac{1}{\operatorname{Vol}(B_r)}\int_{B_r} |\Lambda F_{H_0}^{\perp}|=O(r^{-2-\epsilon}),
	\end{equation*}
for some $\epsilon>0$, then (\ref{laplace equaiton}) admits a bounded solution. In particular, if $(M,g)$ has nonnegative Ricci curvature, volume growth order greater than 2, $|\Lambda F_{H_0}^{\perp}|=O(r^{-2})$ and $|\Lambda F_{H_0}^{\perp}|\in L^1$, then (\ref{laplace equaiton}) admits a bounded solution.
\end{itemize}

Theorem \ref{general existence result} can not be applied to $(X,\omega,g)$ satisfying \textit{Assumption 1} since we do not know whether (\ref{laplace equaiton}) admits a positive solution (for this volume growth order at most 2 is a key issue). And actually Theorem \ref{main theorem} tells us that there are some obstructions for the existence of $\omega$-PHYM   metrics which are mutually bounded with the initial metric. 

Such a phenomenon also appears when we seek a bounded solution for the Poisson equation 
\begin{equation}\label{poission equation}
	\Delta u=f
\end{equation}
on a complete noncompact Riemannian manifold $(M,g)$ with nonnegative Ricci curvature. Suppose $f$ is compactly supported for simplicity, then we know that 
\begin{itemize}
	\item [(1)] if the volume growth order is greater than 2, i.e. there is a constant $c>0$ such that $\operatorname {Vol}(B_r)\geq c r^{2+\epsilon}$ for some $\epsilon>0$, then (\ref{poission equation}) admits a bounded solution. (Since by Li-Yau \cite{li-yau}, $(M,g)$ admits a positive Green's function which is  $O(r^{-\epsilon})$ at infinity, a bounded solution of (\ref{poission equation}) is obtained by the convolution with the Green's function.)
	\item [(2)] if the volume growth order does not exceed 2, i.e. there is a constant $C>0$ such that $\operatorname {Vol}(B_r)\leq C (r+1)^{2}$, then (\ref{poission equation}) admits a bounded solution if and only if $\int_Mf=0$. (For the ``if'' direction, see \cite[Theorem 1.5]{hein2011}. For the ``only if'' direction, suppose we have a bounded function $u$ and a compactly supported function $f$ such that $\Delta u=f$. Then by Cheng-Yau's gradient estimate \cite{cheng-yau}, we obtain $|\nabla u|\leq \frac{C}{r}$ for some $C>0$ independent of $r$. Multiplying $u$ both sides in (\ref{poission equation}) and integrating by parts, we obtain that $|\nabla u|\in L^2$. Then Lemma \ref{exact integral imply degree 0} implies $\int_Mf=0$.)
\end{itemize}

Next we discuss another result whose proof is  similar to the proof of Theorem \ref{main theorem}.
Let $(\olsi{X},\ols{\omega})$ be an $n$-dimensional ($n\geq 2$) compact K\"ahler  manifold, $D$ be a smooth divisor. Let $\ols \omega_D=\ols{\omega}|_D$ denote the restriction of $\ols{\omega}$ to $D$ and $X=\olsi{X}\backslash D$ denote the complement of $D$ in $\olsi{X}$. Let $L_D$ be the line bundle determined by $D$ and $S\in H^0(\olsi{X},L_D)$ be a defining section of $D$. Fix a hermitian metric $h$ on $L_D$. Then after scaling $h$, the function $t=-\log |S|_h^2$ is smooth and positive on $X$.  For any smooth function $F:(0,\infty)\longrightarrow \mathbb R$ with $|F'(t)|\rightarrow 0$ as $t\rightarrow \infty$ and $F''(t)\geq0$ there exists a large constant $A$ such that 
\begin{equation}\label{kahler forms used}
	\omega=A\ols{\omega}+dd^cF(t)
\end{equation}
is a K\"ahler form on $X$. By scaling $\ols{\omega}$ we may assume $A=1$. One can easily check that $\omega$ is complete is and only if $\int_{1}^{\infty} \sqrt{F''}=\infty$ and it always has finite volume. In the following, we always assume the function $F$ satisfies $|F'(t)|\rightarrow 0$ as $t\rightarrow \infty$ and $F''(t)\geq0$. Then we can state  assumptions on $\omega$.

\begin{assumption}
	 Let $\omega$ be the K\"ahler form defined by (\ref{kahler forms used}) and $g$ be the corresponding Riemannian metric. We assume that
 \begin{itemize}
	\item [(1)] the sectional curvature of $g$ is bounded.
	\item [(2)] $C^{-1}t^{-2+\epsilon}\leq F''(t)\leq C$ for some constant $C,\epsilon>0$ and $t$ sufficiently large.
\end{itemize}

\end{assumption}

 A consequence of these assumptions is that $(X,g)$ is complete and of $(K,\alpha,\beta)$-polynomial growth defined in \cite[Definition 1.1]{tian-yau1}, so we can use the weighted Sobolev inequality as we did for the proof of Lemma \ref{weighted sobolev inequality}.

 Let $\olsi{E}$ be an irreducible holomorphic vector bundle on $\olsi{X}$ such that $\olsi{E}|_D$ is $\omega_D$-polystable. Then by Donaldson-Uhlenbeck-Yau theorem, there exists a hermitian metric $H_D$ on $\olsi{E}|_D$ such that 
 \begin{equation}\label{hym condition}
 	\Lambda_{\ols \omega_D}F_{H_D}^{\perp}=0.
 \end{equation}
Extend $H_D$ smoothly to get a smooth hermitian metric $H_0$ on $\olsi{E}$. Then by (\ref{hym condition}) and \textit{Assumption 2}-(2), one can easily show that

\begin{lemma}\label{good initial metric in general setting}
 There exists a $\delta>0$ such that 
	$|\Lambda_{\omega} F_{H_0}^{\perp}|=O(e^{-\delta t})$.
\end{lemma}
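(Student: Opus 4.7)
The plan is to work in a local holomorphic chart $(z,w)$ adapted to $D = \{w=0\}$ and combine two sources of exponential smallness: the highly anisotropic inverse metric of $\omega$ (normal-direction components of $g^{-1}$ shrink exponentially in $t$) and the pointwise vanishing of the tangential curvature component, which is special to $\dim_{\mathbb{C}} D = 1$. Away from $D$ the function $t$ is bounded, so there the estimate is immediate from smooth compactness on $\bar X$; the real content is near $D$, where $t\to\infty$.

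First I would expand $\omega = \bar\omega + F'(t)\,dd^c t + F''(t)\, i\,\partial t\wedge\bar\partial t$ in these coordinates. Writing $t = -\log|w|^2 + \text{(smooth)}$, one has $\partial_w t = -1/w + O(1)$ and $\partial_z t$ smooth bounded, so the metric matrix satisfies $g_{w\bar w}\sim F''(t)/|w|^2$, $g_{z\bar w} = O(F''(t)/|w|)$, and $g_{z\bar z} = O(1)$. By \textit{Assumption 2}--(2), the quantity $F''(t)/|w|^2 \geq c\,t^{-2+\epsilon}e^{t}$ grows exponentially, so $2\times 2$ Cramer inversion yields
\[
g^{z\bar z} = O(1),\qquad g^{z\bar w} = O(|w|) = O(e^{-t/2}),\qquad g^{w\bar w} = O\!\bigl(|w|^2/F''(t)\bigr) = O\!\bigl(t^{2-\epsilon}e^{-t}\bigr).
\]

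Second, the key algebraic input is that since $\dim_{\mathbb{C}} D = 1$, the HYM condition $\Lambda_{\bar\omega_D}F_{H_D}^{\perp}=0$ reduces to the pointwise identity $(F_{H_D}^{\perp})_{z\bar z}=0$; equivalently, $\iota^{*}F_{H_D}^{\perp} = 0$ as a form on $D$. Because $H_0|_D = H_D$ and the tangential component of $F_{H_0}$ only involves tangential derivatives of $H_0$ along $D$, one gets $(F_{H_0}^{\perp})_{z\bar z}|_D = (F_{H_D}^{\perp})_{z\bar z}=0$. Smoothness of $F_{H_0}^{\perp}$ on $\bar X$ then gives the Taylor bound $(F_{H_0}^{\perp})_{z\bar z} = O(|w|) = O(e^{-t/2})$ near $D$, while the remaining components $(F_{H_0}^{\perp})_{z\bar w}$, $(F_{H_0}^{\perp})_{w\bar z}$, and $(F_{H_0}^{\perp})_{w\bar w}$ are merely bounded smooth.

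Combining the two,
\[
\Lambda_\omega F_{H_0}^{\perp} = g^{z\bar z}(F_{H_0}^{\perp})_{z\bar z} + 2\operatorname{Re}\!\bigl(g^{z\bar w}(F_{H_0}^{\perp})_{w\bar z}\bigr) + g^{w\bar w}(F_{H_0}^{\perp})_{w\bar w},
\]
and each of the three terms is a product of an $O(e^{-\delta t})$ factor and a bounded one, yielding $|\Lambda_\omega F_{H_0}^{\perp}| = O(e^{-\delta t})$ for some $\delta > 0$ (concretely $\delta = 1/2$ works). The main — though modest — technical point, and the only place where \textit{Assumption 2}--(2) really enters, is verifying that the rank-one perturbation $F''(t)\,\partial t\otimes\bar\partial t$ genuinely dominates the smooth part of the metric in the normal direction, producing exponential rather than merely polynomial shrinking of $g^{z\bar w}$ and $g^{w\bar w}$; this is supplied directly by the lower bound on $F''(t)$. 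Everything else is soft: block inversion and a first-order Taylor expansion.
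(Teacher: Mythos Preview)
Your local computation is correct and essentially matches what the paper has in mind, but only in the case $n=2$. You explicitly invoke ``$\dim_{\mathbb C} D = 1$'' to conclude that $(F_{H_D}^{\perp})_{z\bar z}=0$ pointwise; however, the setting of this lemma is Section~6.1, where $\bar X$ is an $n$-dimensional compact K\"ahler manifold with $n\geq 2$ arbitrary, so $D$ has complex dimension $n-1$. For $n\geq 3$ the HYM condition $\Lambda_{\bar\omega_D}F_{H_D}^{\perp}=0$ does \emph{not} force the individual tangential components $(F_{H_0}^{\perp})_{\alpha\bar\beta}$ to vanish on $D$, and your Taylor step for the $g^{z\bar z}(F_{H_0}^{\perp})_{z\bar z}$ term collapses.

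The repair for general $n$ is to treat the whole tangential sum at once rather than componentwise: wedging with the dominant $\frac{F''}{|w|^2}\,i\,dw\wedge d\bar w$ factor in $\omega^{n-1}$ and $\omega^n$ restricts everything to the tangential directions, so the leading part of $\Lambda_\omega F_{H_0}^{\perp}$ equals $\Lambda_{(\bar\omega+F'\,dd^c t)_{\text{tan}}}\bigl((F_{H_0}^{\perp})_{\text{tan}}\bigr)$, which at $w=0$ reduces to $\Lambda_{\bar\omega_D}F_{H_D}^{\perp}=0$. Your estimates on $g^{\alpha\bar w}$ and $g^{w\bar w}$ handle the remaining terms exactly as you wrote. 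One caveat worth noting: for $n\geq 3$ this argument produces an additional error of size $O(|F'(t)|)$ coming from the tangential block $\bar\omega_{\alpha\bar\beta}+F'(t)(dd^c t)_{\alpha\bar\beta}$, which under \textit{Assumption 2} is only polynomially small in $t$; the clean exponential bound as stated is genuinely a feature of $n=2$, where your pointwise vanishing removes the $F'$ dependence altogether. For the downstream application (Theorem~6.5) sufficiently fast decay in $r$ is all that is needed.
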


Then we have the following result
\begin{theorem}\label{theorem for general divisor}
	Suppose $(X,\omega)$ satisfies \textit{Assumption 2} and $\olsi{E}|_D$ is $\ols\omega_D$-polystable. Let $H_0$ be a hermitian metric as above and $\mathcal P_{H_0}$ be defined by (\ref{define of P_H}). Then there exists an $\omega$-PHYM metric in $\mathcal P_{H_0}$ if and only if $\olsi{E}$ is $\ols{\omega}$-stable.
\end{theorem}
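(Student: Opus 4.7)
The plan is to follow the same overall strategy as the proof of Theorem~\ref{main theorem}, with the disk-bundle asymptotic model replaced by the simpler asymptotics dictated by Assumption~2. I outline the adaptation in two parts.

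For the ``if'' direction, I would mimic the Simpson-Mochizuki exhaustion argument of Proposition~\ref{existence result}. Exhaust $X$ by compact domains $X_i$ with smooth boundary and solve Dirichlet problems using Theorem~\ref{donaldson dirichlet} to get PHYM metrics $H_i=H_0e^{s_i}$ on $E|_{X_i}$ with $\tr(s_i)=0$ and $s_i|_{\partial X_i}=0$. Assumption~2 ensures that $(X,g)$ is of $(K,\alpha,\beta)$-polynomial growth in the sense of \cite{tian-yau1}, so the weighted Sobolev inequality of that paper and hence the weighted mean-value inequality (Lemma~\ref{weighted sobolev inequality}) apply. Combined with the curvature decay from Lemma~\ref{good initial metric in general setting}, this drives Simpson's blow-up/rescaling argument: if the uniform $C^0$-estimate on $s_i$ failed, one would produce a nontrivial $W^{1,2}_{\mathrm{loc}}$-limit $u_\infty$ satisfying~(\ref{inequality for limits}), and then, after spectral truncation, a weakly holomorphic projection $\pi_\gamma$ with $\pp\pi_\gamma\in L^2$ together with a Simpson-type slope inequality as in Lemma~\ref{degree exceed}. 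Once the $C^0$-bound holds, Bando-Siu's interior estimate (Theorem~\ref{bando-siu interior estimate}) and Mochizuki's trace-formula argument from~(\ref{to show L^2}) give a smooth limit $H\in\mathcal{P}_{H_0}$ which is PHYM.

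To convert Simpson's projection into a destabilizing subsheaf and, dually, to handle the ``only if'' direction, two analogues of the key technical lemmas of Section~\ref{proof of the main theorem} are needed. \emph{(i) Key observation:} a coherent reflexive subsheaf $\olsi S\subset\olsi E$ satisfies $\pp\pi^{H_0}_{\olsi S}\in L^2(X,\omega,H_0)$ if and only if $\olsi S|_D$ is a splitting factor of $\olsi E|_D$, and conversely a weakly holomorphic projection with $\pp\pi\in L^2$ extends via Theorem~\ref{uhlenbeckyau} to such a subsheaf. The proof follows that of Lemma~\ref{key observation}, but with the simpler local model $\omega\sim A\ols{\omega}+dd^cF(t)$ near $D$ and with the growth control in Assumption~2-(2) replacing the disk-bundle estimate~(\ref{estimate for 1-forms}). \emph{(ii) Degree coincidence:} for such $\olsi S$,
$$\int_X\frac{\ii}{2\pi}\,\tr(F_{\olsi S,H_0})\wedge\omega^{n-1}=\int_{\olsi X}c_1(\olsi S)\wedge\ols\omega^{n-1}.$$
Expanding $\omega^{n-1}=(A\ols\omega+dd^cF(t))^{n-1}$ binomially, the pure $\ols\omega^{n-1}$ term gives the algebraic degree by standard Chern-Weil on $\olsi X$, and each remaining cross term is exact and is handled by integration by parts. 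The decay $|F'(t)|\to 0$ from Assumption~2 together with the finite volume of $\omega$ controls the resulting boundary contributions, in the same spirit as Lemma~\ref{degree coincide}.

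With (i) and (ii) in hand, both directions follow the templates of Propositions~\ref{only if part} and~\ref{existence result}. For ``only if'': given a PHYM $H\in\mathcal{P}_{H_0}$ and a reflexive $\olsi S$ violating $\ols\omega$-stability, $\ols\omega_D$-polystability of $\olsi E|_D$ forces $\olsi S|_D$ to be a splitting factor, so $\pp\pi^H_{\olsi S}\in L^2$ by (i); the Chern-Weil identity $\Lambda_\omega\tr(F_{\olsi S,H})=\Lambda_\omega\tr(F_{\olsi E,H}\circ\pi)-|\pp\pi|^2$ combined with (ii) yields $\mu_{\ols\omega}(\olsi S)\le\mu_{\ols\omega}(\olsi E)$, with equality forcing $\pp\pi=0$ and hence a holomorphic splitting of $\olsi E$ contradicting irreducibility. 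For ``if'': failure of the uniform $C^0$-bound would, via (i)--(ii) and Lemma~\ref{degree exceed}, promote Simpson's projection to a reflexive subsheaf whose $\ols\omega$-slope equals or exceeds $\mu_{\ols\omega}(\olsi E)$, contradicting $\ols\omega$-stability. I expect step (ii) to be the main obstacle: controlling boundary contributions from repeated integration by parts against $dd^cF$ requires exploiting both $|F'(t)|\to 0$ and the finite volume of $\omega$, while simultaneously arranging that the singular locus of $\olsi S$, of real codimension at least four, is avoided by good cut-off functions as in the proof of~(\ref{another average of laplacian is 0}).
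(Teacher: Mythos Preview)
Your overall template is right, and your degree-coincidence step (ii) matches the paper's lemma (expand $\omega^{n-1}$ binomially and use $|F'(t)|\to 0$ together with $d^ct\wedge d^ct=0$ to kill the cross terms). But your key observation (i) is wrong in this setting, and this creates a genuine gap in your ``only if'' argument.

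You assert the biconditional ``$\pp\pi^{H_0}_{\olsi S}\in L^2(X,\omega)$ if and only if $\olsi S|_D$ is a splitting factor of $\olsi E|_D$,'' modeled on Lemma~\ref{key observation}. In the Assumption~2 geometry this is false: $\pp\pi^{H_0}_{\olsi S}\in L^2(X,\omega)$ holds for \emph{every} reflexive subsheaf, not only for those splitting on $D$. Indeed, the only term in $\omega^{n-1}$ that is not controlled by $\ols\omega^{n-1}$ is $\ols\omega^{n-2}\wedge F''(t)\,\ii\partial t\wedge\pp t$; wedging with $\ii\tr(\partial\pi\wedge\pp\pi)$ picks out only the tangential part of $\pp\pi$, which is bounded near $D$ (away from the codimension-$\geq 3$ singular set), and the normal integral reduces to $\int^\infty F''(t)\,dt=-F'(\cdot)<\infty$ since $|F'|\to 0$. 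This is precisely why the stability condition in Theorem~\ref{theorem for general divisor} is plain $\ols\omega$-stability rather than a pair condition: the splitting-factor filter that singled out a subclass of subsheaves in Lemma~\ref{key observation} disappears here.

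Your ``only if'' argument then breaks at the sentence ``$\ols\omega_D$-polystability of $\olsi E|_D$ forces $\olsi S|_D$ to be a splitting factor.'' Knowing $\mu_{\ols\omega}(\olsi S)\geq\mu_{\ols\omega}(\olsi E)$ on $\olsi X$ says nothing about $\mu_{\ols\omega_D}(\olsi S|_D)$; in Proposition~\ref{only if part} that implication worked only because the first class in the pair was $c_1(D)$, and $c_1(D)$-slope restricts to $c_1(N_D)$-slope on $D$. The paper's route avoids this entirely. For the ``if'' direction it uses the pointwise bound $\omega\geq c\,\ols\omega$ to conclude that any weakly holomorphic projection with $\pp\pi\in L^2(X,\omega)$ already lies in $L^2(\olsi X,\ols\omega)$, so Uhlenbeck--Yau produces a subsheaf of $\olsi E$ over $\olsi X$ with no splitting hypothesis; for the ``only if'' direction it invokes only Lemma~\ref{exact integral imply degree 0} and the degree lemma, applying the Chern--Weil inequality to an arbitrary reflexive $\olsi S$. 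Replace your (i) by this simpler observation and drop the splitting-factor claims, and the rest of your outline goes through.
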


Using the argument in Proposition \ref{only if part}, the ``only if'' direction follows from Lemma \ref{exact integral imply degree 0} and the following lemma . 

\begin{lemma}
	For every smooth closed (1,1)-form $\theta$ on $\olsi{X}$, we have
	\begin{equation}\label{degree equals 2}
		\int_X \theta\wedge \omega^{n-1}=\int_X\theta\wedge \ols{\omega}^{n-1}
	\end{equation}
\end{lemma}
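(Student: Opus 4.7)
The plan is to interpolate between $\ols{\omega}$ and $\omega$ and use Stokes' theorem, with the boundary contribution going to zero because $|F'(t)|\to 0$ as $t\to\infty$.

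First I will use the algebraic identity for commuting forms,
$$\omega^{n-1}-\ols{\omega}^{n-1}=(\omega-\ols{\omega})\wedge\eta=dd^cF(t)\wedge\eta, \qquad \eta:=\sum_{k=0}^{n-2}\omega^k\wedge\ols{\omega}^{n-2-k}.$$
Since $\theta$, $\omega$, and $\ols{\omega}$ are all $d$-closed on $X$, the form $\theta\wedge\eta$ is a closed smooth form on $X$, so
$$\theta\wedge\eta\wedge dd^cF(t)=d\bigl(\theta\wedge\eta\wedge d^cF(t)\bigr).$$
Setting $X_\epsilon:=\{|S|_h^2\geq \epsilon\}$ and $T_\epsilon:=-\log\epsilon$, Stokes' theorem together with $d^cF(t)=F'(t)\,d^ct$ yields
$$\int_{X_\epsilon}\theta\wedge\eta\wedge dd^cF(t)=F'(T_\epsilon)\int_{\{t=T_\epsilon\}}\theta\wedge\eta\wedge d^ct.$$
(Integrability of $\theta\wedge\omega^{n-1}$ is the standing hypothesis that makes both sides of \eqref{degree equals 2} finite, so the $\epsilon\to 0$ limit of the left-hand side is $\int_X\theta\wedge(\omega^{n-1}-\ols{\omega}^{n-1})$.)

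It then suffices to show that $\int_{\{t=T\}}\theta\wedge\eta\wedge d^ct$ is uniformly bounded in $T$, since $F'(T)\to 0$ by assumption. Restricted to the level set we have $dt=0$, so every $dd^cF(t)$ factor in $\eta$ collapses to $F'(T)\,dd^ct|_{\{t=T\}}$; in local coordinates $(z_1,\ldots,z_{n-1},w)$ with $D=\{w=0\}$ one has $t=-\log|w|^2+(\text{smooth})$, so $dd^ct$ extends smoothly across $D$ and $\eta|_{\{t=T\}}$ stays uniformly bounded in a fixed smooth background metric on $\olsi{X}$. Writing $w=re^{i\phi}$ one finds $d^ct=-d\phi+(\text{smooth})$, and the level set $\{t=T\}$ converges smoothly, as $T\to\infty$, to a circle bundle over $D$. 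Hence $\int_{\{t=T\}}\theta\wedge\eta\wedge d^ct$ is uniformly bounded and the boundary term tends to $0$, giving $\int_X\theta\wedge(\omega^{n-1}-\ols{\omega}^{n-1})=0$.

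The main obstacle is making the uniform boundedness on the level sets rigorous, but the key point is elementary once one uses $dt|_{\{t=T\}}=0$ to eliminate the $dt\wedge d^ct$ term from $dd^cF$: this reduces $\eta|_{\{t=T\}}$ to a polynomial expression in forms that all extend smoothly to $\olsi{X}$, with coefficients involving only $F'(T)$, which is bounded. The remaining pieces of the argument (Stokes, the limit $F'(T)\to 0$, and reassembling the binomial expansion) are routine.
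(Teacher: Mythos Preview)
Your proof is correct and follows essentially the same approach as the paper: both reduce via Stokes' theorem to a boundary integral on the level sets $\{t=T\}$, factor out $F'(T)$, and use $F'(T)\to 0$ together with the uniform boundedness of the remaining integral. The only difference is organizational: the paper expands $\omega^{n-1}=(\ols{\omega}+dd^cF)^{n-1}$ binomially and treats each cross-term $\theta\wedge\ols{\omega}^{\,n-1-k}\wedge(dd^cF)^k$ separately (invoking $d^ct\wedge d^ct=0$ for $k\geq 2$ and a volume-times-norm estimate for $k=1$), whereas you package everything into the single telescoped form $\theta\wedge\eta\wedge dd^cF$ and argue boundedness of the level-set integral directly via the circle-bundle geometry; these are two phrasings of the same computation.
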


\begin{proof}
	Firstly note that since there exists a positive number $c>0$ such that $\omega> c\ols{\omega}$ and $\int \omega^n<\infty$, the left hand side of (\ref{degree equals 2}) is well-defined.
	Therefore it suffices to show that for any $1\leq k\leq n-1$
	\begin{equation*}
	\int_X\theta\wedge \ols{\omega}^{n-1-k}\wedge (dd^cF)^{k}=0.
	\end{equation*}
	Let $S_{\epsilon}$ denote the level set $\{|S|_h=\epsilon\}$. By integration by part, it suffices to show that 
	\begin{equation}\label{to be proved for any k}
		\lim_{\epsilon\rightarrow 0}\int_{S_{\epsilon}}\theta\wedge \ols{\omega}^{n-1-k}\wedge (dd^cF)^{k-1}\wedge d^c F=0.
	\end{equation}
	
\noindent\textit{Case 1.} $k=1$. Note that with respect to the smooth back ground metric $\ols{\omega}$, $\operatorname{Vol}(S_{\epsilon})=O(\epsilon)$ and $|d^c F|\leq C|F'(t)|\epsilon^{-1}$ on $S_{\epsilon}$. Then (\ref{to be proved for any k}) follows from the assumption that $|F'|\rightarrow 0$ as $t\rightarrow\infty$.

\noindent\textit{Case 2.} $2\leq k\leq n-1$.  
Then (\ref{to be proved for any k}) follows from the fact that $|F'(t)|\rightarrow 0$ as $t\rightarrow \infty$ and $d^ct\wedge d^c t=0$.
\end{proof}

For the ``if'' direction, the argument in Proposition \ref{existence result} applies. We will not give the details and just point out the following two observations which make the argument work in this setting. The key points are
\begin{itemize}
	\item [(1)] \textit{Assumption 2} and Lemma \ref{good initial metric in general setting} ensure that we can apply the weighted mean value inequality proved in Lemma \ref{weighted sobolev inequality}. 
	\item [(2)] We have $L^2(X,\omega)\subset L^2(\olsi{X},\ols{\omega})$ since $\omega\geq c\ols{\omega}$ for some $c>0$, therefore by Uhlenbeck-Yau's theorem (Theorem \ref{uhlenbeckyau}) a weakly projection map $\pi$ of $E$ over $X$ with $|\pp \pi|\in L^2(X,\omega)$ defines a coherent torsion free sheaf $\olsi{S}$ of $\olsi{E}$. 

\end{itemize}

\subsection{Calabi-Yau metrics satisfying \textit{Assumption 1}}\label{examples}
As mentioned in the Introduction, there do exist interesting K\"ahler metrics satisfying the \textit{Assumption 1}, which contain Calabi-Yau metrics on the complement of an anticanonical divisor of a Fano manifold and its generalizations \cite{tian-yau1, hsvz1, hsvz2}. We will call them Tian-Yau metrics. Here we give a sketch for the construction of these Calabi-Yau metrics and refer to \cite{hsvz2}-Section 3 for more details.

Let $\olsi{X}$ be an $n$-dimensional ($n\geq 2$) projective manifold, $D\in |K^{-1}_{\olsi{X}}|$ be a smooth divisor and $X=\olsi{X}\backslash D$ be the complement of $D$ in $\olsi{X}$. Suppose  that the normal bundle of $D$ in $\olsi{X}$, $N_D=K^{-1}_{\olsi{X}}|_D$ is ample. Fixing a defining section $S\in H^0(\olsi X, K^{-1}_{\olsi X})$ of the divisor $D$ whose inverse can be viewed as a holomorphic volume form $\Omega_X$ on $X$ with a simple pole along $D$. Let $\Omega_D$ be the holomorphic volume form on $D$ given by the residue of $\Omega_X$ along $D$. Using Yau's theorem \cite{yau1978} , there is a hermitian metric $h_D$ on $K^{-1}_{\olsi X}|_D$ such that its curvature form is a Ricci-flat K\"ahler metric $\omega_D$ with $$\omega_D^{n-1}=(\ii)^{(n-1)^2}\Omega_D\wedge\ols \Omega_D$$ by rescaling $S$ if necessary. One can show that the hermitian metric $h_D$ extends to a global hermitian metric $h_{\olsi X}$ on $K^{-1}_{\olsi X}$ such that its curvature form is nonnegative and positive in a neighborhood of $D$.

%

By glueing a smooth positive constant on a compact set, we  get a global positive  smooth function $z$ which is equal to $(-\log |S|^2_{h_{\olsi X}})^{\frac{1}{n}}$ outside a compact set. For any $A\in \mathbb R$, we denote $h_A=h_{\olsi X}e^{-A}$ and $v_A=\frac{n}{n+1}(-\log |S|_{h_A}^2)^{\frac{n+1}{n}}$, which is viewed as a smooth function defined outside a compact set on $X$.
We denote by $H^2_{c,+}(X)$ the subset of $\Im(H^2_c(X,\mathbb R)\rightarrow H^2(X,\mathbb R))$ consisting of 
classes $\mathfrak k$ such that $\int_Y\mathfrak{k}^p>0$ for any compact analytic subset $Y$ of $X$ of pure dimension $p>0$.

Then Hein-Sun-Viaclovsky-Zhang proved the following result
\begin{theorem}\cite{hsvz2}\label{generalized tianyau}
For every class $\mathfrak{k} \in H_{c,+}^{2}(X)$, there is a unique K\"ahler metric $\omega\in \mathfrak k $ such that
	\begin{itemize}
	\item [(1)] $\omega^n=(\ii)^{n^2} \Omega_X\wedge\ols{\Omega}_X$, and 
		\item [(2)] $|\nabla^l_{\omega}(\omega-\ii\partial\pp v_A)|_{\omega}=O\left(e^{-\delta z^{\frac{n}{2}}}\right)$ for some $\delta,A>0$ and all $l\geq 0$.
	\end{itemize}
	\end{theorem}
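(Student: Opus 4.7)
The plan is to produce the metric by solving a complex Monge--Amp\`ere equation of Calabi--Yau type on $X$, viewing $\ii\partial\pp v_A$ as a model to perturb. First I would construct a reference K\"ahler form $\omega_{\mathrm{bg}}\in\mathfrak k$ that agrees with $\ii\partial\pp v_A$ outside a compact subset of $X$. This uses the assumption $\mathfrak k\in H^2_{c,+}(X)$: pick a compactly supported closed $(1,1)$-representative $\alpha$ of $\mathfrak k$; then $\omega_{\mathrm{bg}}:=\alpha+\ii\partial\pp(\chi\, v_A)$, where $\chi$ is a cutoff that is $1$ near infinity, will be positive everywhere after modifying by a further compactly supported $dd^c$-exact term (the positivity in the positive cone $H^2_{c,+}$ allows this, and on the model end $dd^c v_A$ is itself a complete K\"ahler form of Tian--Yau type because $F(t)=\tfrac{n}{n+1}t^{(n+1)/n}$ lies in $\mathcal H$).

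Next I would verify that $\omega_{\mathrm{bg}}$ almost solves the target equation at infinity. Using the identity $\omega_D^{n-1}=(\ii)^{(n-1)^2}\Omega_D\wedge\olsi\Omega_D$ for the Calabi--Yau metric on the divisor, a direct computation in the model coordinates of Section~\ref{assumption on the asymptotic behaviour} (formula~(\ref{explicit expression of kahler form}) with $F=\tfrac{n}{n+1}t^{(n+1)/n}$) shows that the model metric $\omega_F$ satisfies $\omega_F^n=(\ii)^{n^2}(d\log S)\wedge p^*(\Omega_D\wedge\olsi\Omega_D)$ up to terms that vanish on $D$. Hence the Ricci potential $f$ defined by $\omega_{\mathrm{bg}}^n=e^f(\ii)^{n^2}\Omega_X\wedge\olsi\Omega_X$ decays like $O(e^{-\delta z^{n/2}})$, and $\int_X(e^f-1)\,\omega_{\mathrm{bg}}^n=0$ because both volume forms represent the same cohomological/residue data.

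It then remains to solve
\begin{equation*}
(\omega_{\mathrm{bg}}+\ii\partial\pp u)^n=e^{-f}\omega_{\mathrm{bg}}^n,\qquad u\to 0\text{ at infinity},
\end{equation*}
in weighted H\"older spaces adapted to the Tian--Yau asymptotics. I would set this up exactly as in Tian--Yau \cite{tian-yau1} and Hein's thesis: on the end $\{z\gg 1\}$ the geometry is modeled on $(\mathcal C,\omega_F)$ with polynomial volume growth of order $2n(1-1/a)$ where $a=n/(n-1)$, so one has a scale of weighted H\"older/Sobolev spaces in which the Laplacian $\Delta_{\omega_{\mathrm{bg}}}$ is an isomorphism for a suitable range of weights. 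A continuity method in the family $(\omega_{\mathrm{bg}}+\ii\partial\pp u_s)^n=e^{-sf}\omega_{\mathrm{bg}}^n$ reduces the problem to uniform \emph{a priori} estimates: openness comes from the weighted Fredholm theory; for closedness one needs the classical $C^0$, $C^2$ and $C^3$ estimates of Yau, together with a global $C^0$ bound. Uniqueness follows from Yau's argument, applied with decay at infinity: if two solutions $\omega_1,\omega_2$ exist, the difference of potentials $u$ is bounded and harmonic-like via the Monge--Amp\`ere identity, and the maximum principle for the linearized operator (valid because $u\to 0$) forces $u\equiv 0$.

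The main obstacle is the weighted $C^0$ estimate. The asymptotic volume growth order $2n(1-1/a)=2$ when $a=n/(n-1)$ is borderline, so Moser iteration against the standard Sobolev inequality fails; instead one must use the weighted Sobolev inequality of Tian--Yau \cite[Proposition~2.1]{tian-yau1} (the same ingredient used in Lemma~\ref{weighted sobolev inequality}) combined with the exponential decay of $f$ to absorb the weight. The closely related technical point, handled as in \cite{hsvz2}, is to upgrade the weighted $C^0$ control of $u$ to the pointwise exponential decay $|\nabla^l_\omega(\omega-\ii\partial\pp v_A)|_\omega=O(e^{-\delta z^{n/2}})$, which requires bootstrapping in weighted H\"older spaces after the nonlinear solution has been produced. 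Once these analytic estimates are in place, properties (1) and (2) of the theorem follow immediately from the construction.
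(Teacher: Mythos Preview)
This theorem is not proved in the present paper; it is quoted verbatim from \cite{hsvz2} and is used only to check that Tian--Yau metrics furnish examples satisfying \textit{Assumption~1}. So there is no ``paper's own proof'' to compare against. Your sketch does follow the general strategy of \cite{tian-yau1,hsvz2}: build a background K\"ahler form in $\mathfrak k$ asymptotic to the model $\ii\partial\pp v_A$, show the Ricci potential decays rapidly, and solve the resulting complex Monge--Amp\`ere equation via a priori estimates in weighted spaces adapted to the Tian--Yau geometry. In that sense your outline is correct in spirit.

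Two points in your outline deserve adjustment if you intend it as an actual proof. First, constructing a genuine K\"ahler form $\omega_{\mathrm{bg}}\in\mathfrak k$ is not as simple as ``modifying by a further compactly supported $dd^c$-exact term''; in \cite{hsvz2} this step uses a Demailly--P\u aun type argument, and the hypothesis $\mathfrak k\in H^2_{c,+}(X)$ is formulated precisely so that this works. Second, your justification ``$\int_X(e^f-1)\,\omega_{\mathrm{bg}}^n=0$ because both volume forms represent the same cohomological/residue data'' is not correct: $X$ has infinite volume, so no such integral normalization is imposed. The role of the free constant $A$ in $v_A$ is rather to match the leading asymptotics of $\omega_{\mathrm{bg}}^n$ and $(\ii)^{n^2}\Omega_X\wedge\olsi\Omega_X$ on the end, which is what gives $f$ the exponential decay needed to run the weighted analysis.
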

	
And from the construction in \cite{hsvz2}-Section 3, we have the decomposition $\omega=\omega_0+dd^c\varphi$, where $\omega_0$ is a smooth (1,1)-form on $\olsi{X}$ vanishing when restricted to $D$. And by Theorem \ref{generalized tianyau} and the estimate in \cite[Proposition 3.4]{hsvz1}, one can directly check that these K\"ahler metrics satisfy \textit{Assumption 1}.

\begin{remark}
	It was proved in \cite{hsvz1} that Tian-Yau metrics $\omega_{TY}$ can be realized as the rescaled pointed Gromov-Hausdorff limits of a sequence of Calabi-Yau metrics $\omega_k$ on a $K3$ surface. We expect that $\omega_{TY}$-PHYM connections we obtained in this paper give models for the limits of $\omega_k$-HYM connections on the $K$3 surface.
\end{remark}

\subsection{On the ampleness assumption of the normal bundle $N_D$}\label{general normal bundles} In this subsection, we first explain why we assume the normal bundle of $D$ is ample and then discuss the case where the normal bundle is trivial on compact K\"ahler surfaces.

 Let us start with a question. Suppose we have a nontrivial nef class $\alpha$ in $H^{1,1}(\olsi{X})$ and $D\in \ols{X}$ a smooth divisor, when does 
\begin{equation}
	\mu(\mathcal E,\alpha)=\mu(\mathcal E|_D,\alpha|_D)
\end{equation}
hold for every coherent reflexive sheaf $\mathcal E$ on $\olsi{X}$ ? A sufficient condition is that 
\begin{equation*}
	\alpha^{n-2}\wedge(\alpha-c_1(D))=0 \text{ in $H^{n-1,n-1}(\olsi{X})$}.
\end{equation*}
In order to have the above equality, a natural (possibly the only reasonable) choice is that $\alpha=c_1(D)$.

 To make the argument in this paper work, we also need the following property: 
 \textit{if a vector bundle $F$ on $D$ is polystable with respect to $\alpha|_D$ and $S$ is a coherent subsheaf of $F$ with the same $\alpha|_D$-degree as $F$, then $S$ is a vector bundle and is a splitting factor of $F$}. (Note that this does not follow from the definition since $\alpha|_D$ may not be a K\"ahler class. For example if $\alpha|_D$ is 0, then definitely it does not satisfy this property.) In general in order to have this property, we need $\alpha|_D$ to be a K\"ahler class. This is one of the reasons why we assume that the normal bundle of $D$ is ample, i.e. $c_1(D)|_D$ is a K\"ahler class. Another reason is that by assuming $N_D$  is ample, on the punctured disc bundle $\mathcal C$ we have explicit exact K\"ahler forms, which give models of the K\"ahler forms on $X$.

However if $\olsi{X}$ is a compact complex surface, in which case the divisor $D$ now is a smooth Riemann surface, then the property mentioned above always holds. Note that on a Riemann surface $D$, the slope of a vector bundle is canonically defined and independent of the choice of  cohomology classes on $D$.
\begin{lemma}Let $\olsi{X}$ be a compact K\"ahler surface and $D$ be a smooth divisor. Suppose $\olsi{E}|_D$ is polystable.
	Let $\olsi{S}$ be a coherent reflexive subsheaf of $\olsi{E}$. Then
	$\mu(\olsi{S},c_1(D))=\mu(\olsi{E},c_1(D))$ if and only if $\olsi{S}|_D$ is a splitting factor of $\olsi{E}|_D$.
\end{lemma}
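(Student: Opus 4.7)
My plan is to translate both sides of the equivalence to data on the curve $D$ and then argue by the Narasimhan–Seshadri theorem together with a projection computation in the spirit of Proposition \ref{only if part}. Since $\olsi{X}$ is a smooth surface, reflexive coherent sheaves are automatically locally free, so $\olsi{S}$ is a holomorphic vector bundle on $\olsi{X}$. By Lemma 3.23 and Remark 3.25 of \cite{chen-sun}, $\olsi{S}|_D$ is torsion-free and embeds as a subsheaf of $\olsi{E}|_D$, and since $D$ is a curve this is automatically an injection of vector bundles. Combined with the restriction formula $\mu(\cdot,c_1(D))=\mu(\cdot|_D)$ of \cite{mistretta} (already used in the main proof, where the right-hand side is the canonical slope on the curve $D$), the problem reduces to the purely curve-theoretic claim: for a vector subbundle $\olsi{S}|_D\hookrightarrow \olsi{E}|_D$, one has $\mu(\olsi{S}|_D)=\mu(\olsi{E}|_D)$ if and only if $\olsi{S}|_D$ is a direct summand of $\olsi{E}|_D$.

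The ``if'' direction is easy: writing $\olsi{E}|_D=\olsi{S}|_D\oplus Q$ expresses $\mu_0:=\mu(\olsi{E}|_D)$ as the rank-weighted average of $\mu(\olsi{S}|_D)$ and $\mu(Q)$, and polystability of $\olsi{E}|_D$ forces both slopes to be $\leq \mu_0$, hence both equal $\mu_0$. For the ``only if'' direction, I first pass to the saturation $T$ of $\olsi{S}|_D$ in $\olsi{E}|_D$: then $\mathrm{rank}(T)=\mathrm{rank}(\olsi{S}|_D)$, while semistability of $\olsi{E}|_D$ and the hypothesis give $\mu(T)\leq \mu_0=\mu(\olsi{S}|_D)\leq \mu(T)$, forcing $\deg T=\deg \olsi{S}|_D$ and hence $T=\olsi{S}|_D$. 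Thus $\olsi{S}|_D$ is already a holomorphic subbundle of $\olsi{E}|_D$.

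By the Narasimhan–Seshadri theorem (equivalently the one-dimensional case of Donaldson–Uhlenbeck–Yau), the polystable bundle $\olsi{E}|_D$ admits a Hermitian-Einstein metric $H$, with $\Lambda_{\omega_D}F_H=c\cdot \mathrm{id}$ for some constant $c$. Let $\pi=\pi^{H}_{\olsi{S}|_D}$ be the smooth orthogonal projection. Following the computation in the proof of Proposition \ref{only if part}, applied now on the curve $D$, the curvature identity $F_{\olsi{S}|_D,H|_S}=F_{\olsi{E}|_D,H}|_S-\partial\pi\wedge\pp\pi$ combined with the Hermitian-Einstein condition yields after taking trace and integrating
\begin{equation*}
\deg(\olsi{S}|_D)\,=\,\mathrm{rank}(\olsi{S}|_D)\cdot \mu_{0}\,-\,\tfrac{1}{2\pi}\int_{D}|\pp\pi|^{2}_{H}\,\omega_{D}.
\end{equation*}
The hypothesis $\mu(\olsi{S}|_D)=\mu_0$ then forces $\pp\pi=0$, so that both $\pi$ and $\mathrm{id}-\pi$ are holomorphic projections, giving the desired splitting $\olsi{E}|_D=\olsi{S}|_D\oplus \ker\pi$. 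The only step that requires any care is the Chern–Weil identity above, which is the one-dimensional analogue of a calculation already carried out in detail in the main proof; no new analytic or algebraic difficulty appears.
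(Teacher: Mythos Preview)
Your argument is correct and fills in what the paper leaves implicit: the lemma is stated in Section~\ref{general normal bundles} without proof, as a routine consequence of polystability on the curve $D$. Your reduction via \cite{mistretta} to slopes on $D$, the saturation step, and the Chern--Weil/Narasimhan--Seshadri computation are all sound and match the spirit of the paper's own treatment of the analogous step in Proposition~\ref{only if part}.

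One small expository wrinkle: your sentence ``since $D$ is a curve this is automatically an injection of vector bundles'' reads as if the map $\olsi{S}|_D\hookrightarrow\olsi{E}|_D$ were fiberwise injective from the outset, which is not guaranteed (the cokernel of $\olsi{S}\hookrightarrow\olsi{E}$ on the surface can have torsion supported on $D$). You clearly know this, since you immediately pass to the saturation $T$ and deduce $T=\olsi{S}|_D$ from the degree equality; but the earlier sentence should be softened to say only that $\olsi{S}|_D$ is locally free and injects as a \emph{subsheaf}. With that adjustment the proof is clean.
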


Using this, most of the arguments in Section \ref{proof of the main theorem} can be modified to work for divisors $D$ with $c_1(N_D)=0$ in complex dimension 2. In the following, we assume $c_1(N_D)=0$ in $H^2(D,\mathbb R)$. Then it is easy to see that $c_1(D)$ is nef and by the global $\partial\pp$-lemma on $D$, we know that there exists a hermitian metric $h_D$ on $N_D$ with vanishing curvature.  Let $L_D$ be the line bundle determined by $D$ and $S\in H^0(\olsi{X},L_D)$ be a defining section of $D$. Then we can extend $h_D$ smoothly to get a smooth hermitian metric $h$ on $L_D$ and after a rescaling, we may assume that $t=-\log |S|_h^2$ is positive on $X$. 
In this case, we can consider (at least) all  monomials potentials with degree greater than 1
 \begin{equation}
	\mathcal H:=\left\{F(t)=At^a: A>0 \text{ is a constant and } a>1\right\}.
\end{equation}


\begin{assumption}
Let $\omega$ be a K\"ahler form on $X$ and $g$ be the corresponding Riemannian metric. We assume that  
\begin{itemize}
	\item [(1)] the sectional curvature of $g$ is bounded.
	\item [(2)] the form $\omega$ can be written as $\omega_0+\ii\partial\pp F(t)$ for some $F\in \mathcal H$, where $\omega_0$ is a smooth closed (1,1)-form on $\olsi{X}$. 
\end{itemize}
\end{assumption}

 Suppose $(X,\omega,g)$ satisfies \textit{Assumption 3}, then we have the following consequences: 	\begin{itemize}
 		\item  the Riemannian metric $g$ is complete and has volume growth order at most 2,
 		\item  $(X,g)$ is of $(K,2,\beta)$-polynomial growth as defined in \cite[Definition 1.1]{tian-yau1} for some positive constants $K$ and $\beta$.
 		\end{itemize}

  Let $\olsi{E}$ be an irreducible holomorphic vector bundle over $\olsi{X}$ such that $\olsi{E}|_D$ is polystable with degree 0. Then by Donaldson-Uhlenbeck-Yau theorem  (for Riemann surfaces this was first proved by Narasimhan and Seshadri \cite{narasimhan}), there exists a hermitian metric $H_D$ on $\olsi{E}|_D$ such that 
  \begin{equation*}
  	\Lambda_{\omega_D}F_{H_D}=0.
  \end{equation*}
Since $D$ is a Riemann surface, this is equivalent to say that $H_D$ gives a flat metric on $\olsi{E}|_D$, i.e.
\begin{equation}\label{PHYM condition}
	F_{H_D}=0.
\end{equation}
Extending $H_D$ smoothly to get a hermitian metric $H_0$ on $\olsi{E}$ then by (\ref{PHYM condition}) and the proof of Lemma \ref{good initial metric}, we know that $H_0$ is already a good initial metric in the following sense:
\begin{equation}\label{again good initial metric} 
		|F_{H_0}|=O(e^{-\delta t}).
	\end{equation}
Then we have the following result, whose proof is essentially the same as that for Theorem \ref{main theorem}. We just point out the difference.
\begin{theorem}\label{theorem for trivial normal bundle}
	Suppose $(X,\omega)$ satisfies \textit{Assumption 3} and $\olsi{E}|_D$ flat. Let $H_0$ be a hermitian metric as above and $\mathcal P_{H_0}$ be defined by (\ref{define of P_H}). Then there exists an $\omega$-PHYM metric in $\mathcal P_{H_0}$ if and only if $\olsi{E}$ is $\left(c_1(D),[\omega_0]\right)$-stable.
\end{theorem}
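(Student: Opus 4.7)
The plan is to mimic the proof of Theorem \ref{main theorem}, dividing into the necessity and sufficiency directions and pointing out where the surface setting and the triviality of $c_1(N_D)$ force changes. The crucial new input is the lemma just stated: since $D$ is a Riemann surface, if $\olsi{E}|_D$ is polystable (hence in our situation flat) and $\olsi{S}$ is a reflexive subsheaf of $\olsi{E}$ with $\mu(\olsi{S},c_1(D))=\mu(\olsi{E},c_1(D))$, then by the Mistretta-type restriction identity the restriction $\olsi{S}|_D$ has the same slope as $\olsi{E}|_D$ and is forced to be a splitting factor. This is what allows the analog of Lemma \ref{key observation} to go through in this setting.

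For the ``only if'' direction, I would first establish the analog of Lemma \ref{degree coincide}: for $H_0$ the initial metric with $|F_{H_0}|=O(e^{-\delta t})$, one has
\begin{equation*}
\int_X \tfrac{\ii}{2\pi}\tr(F_{H_0})\wedge\omega = \int_{\olsi{X}} c_1(\olsi{E})\wedge[\omega_0].
\end{equation*}
The exponential decay of $F_{H_0}$ in (\ref{again good initial metric}) together with Lemma \ref{poincare lelong} (applied to $t=-\log|S|_h^2$) and the integration-by-parts argument of Lemma \ref{a more precise degree vanishing} handle the cross terms $\int_X \tr(F_{H_0})\wedge dd^c\varphi$; the $dd^c\varphi$-squared term is not present because $n=2$, which simplifies things considerably compared to Lemma \ref{degree coincide}. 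Having this, the argument of Proposition \ref{only if part} applies verbatim: given a destabilizing $\olsi{S}$ with $\mu(\olsi{S},c_1(D))\geq \mu(\olsi{E},c_1(D))$, the surface lemma guarantees $\olsi{S}|_D$ is a splitting factor of the flat bundle $\olsi{E}|_D$, hence the projection $\pi_{\olsi S}^{H_0}$ is smooth near $D$ with $\pp\pi|_D=0$, giving $\pp\pi\in L^2(X;\omega,H_0)$; conformal invariance then yields $\pp\pi^H\in L^2$, and the Chern-Weil computation combined with the degree identity contradicts $(c_1(D),[\omega_0])$-stability.

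For the ``if'' direction, I would run the Simpson-Mochizuki exhaustion argument of Proposition \ref{existence result}: solve Dirichlet problems on $X_i$ using Theorem \ref{donaldson dirichlet} to get PHYM metrics $H_i=H_0 e^{s_i}$ with $\tr(s_i)=0$, apply the weighted mean value inequality of Lemma \ref{weighted sobolev inequality} (available since Assumption 3 gives $(K,2,\beta)$-polynomial growth), and argue by contradiction to obtain a uniform $C^0$-estimate. If no uniform bound exists, the normalized limit $u_\infty$ produces, via Simpson's spectral construction, a weakly holomorphic projection $\pi_\gamma$ with $\pp\pi_\gamma\in L^2(X;\omega,H_0)$. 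Here the analog of Lemma \ref{key observation}(2) is where the surface structure is essential: from $\pp\pi_\gamma\in L^2$ with respect to $\omega$ I need to upgrade to $\pp\pi_\gamma\in L^2$ with respect to a smooth background metric on $\olsi{X}$, then invoke Theorem \ref{uhlenbeckyau} to obtain a reflexive subsheaf $\olsi{S}\subset\olsi{E}$. The local $L^2$ computation is essentially a repetition of the integrability estimate (\ref{integrability}), now with general $a>1$; the conclusion that $\olsi{S}|_D$ is a splitting factor uses the surface lemma rather than polystability in higher dimensions. Combining with Lemma \ref{degree exceed} and the degree identity contradicts the $(c_1(D),[\omega_0])$-stability of $\olsi{E}$. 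Bando-Siu (Theorem \ref{bando-siu interior estimate}) and Mochizuki's $L^2$-argument for $\pp s$ then complete the existence; uniqueness follows from Lemma \ref{exact integral imply degree 0} exactly as before.

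The main obstacle I expect is verifying the $L^2$ upgrade of $\pp\pi$ from $\omega$ to a smooth metric on $\olsi{X}$ when the potential degree $a$ is allowed to be arbitrarily large. The explicit weighted estimates (\ref{estimate for 1-forms}) were written for $a\in(1,n/(n-1)]$, but the actual inequalities in (\ref{integrability}) only require $a>1$ and the observation that the radial direction dominates. A careful rewrite of that local calculation, together with checking that the degree identity of Lemma \ref{degree coincide} still holds when $a$ is large (so that the volume growth can be exactly 2), are the two technical points that genuinely require attention; everything else transports line-by-line from Sections \ref{existence of good initial metric} and \ref{proof of the main theorem}.
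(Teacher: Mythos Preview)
Your proposal is correct and follows essentially the same route as the paper. The paper's own proof is extremely terse: it simply says that the argument of Section~\ref{proof of the main theorem} applies verbatim once the analog of Lemma~\ref{degree coincide} is established, and then proves that analog by writing $dd^c\varphi = dd^c(\chi\varphi) + d\psi$ for a $1$-form $\psi$ of at most polynomial growth (via the construction of Lemma~\ref{a good 1-form potential}) and invoking the exponential decay \eqref{again good initial metric} of $F_{H_0}$ to kill the boundary term. Your reduction to the degree identity and your integration-by-parts strategy are exactly this; one small simplification you can make is that Lemma~\ref{poincare lelong} is not actually needed here, since in the trivial-normal-bundle setting $H_0$ is already a smooth extendable metric (no conformal factor $e^{Ct}$), so the first reduction step of Lemma~\ref{degree coincide} is vacuous and only the $\int_X \tr(F_{H_0})\wedge dd^c\varphi=0$ step remains.
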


 The argument in Section \ref{proof of the main theorem} can be applied if Lemma \ref{degree coincide} still holds. The analog of Lemma \ref{degree coincide} in this case is the following lemma, for which we need to assume $\olsi{E}|_D$ is flat.

\begin{lemma}Suppose $(X,\omega)$ satisfies \textit{Assumption 3} and $\olsi{E}|_D$ is flat. Let $H_0$ be a hermitian metric as above. Then we have the following equality:
	\begin{equation*}
		\int_{X}\frac{\ii}{2\pi}\tr(F_{H_0})\wedge \omega=\int_{\olsi X}c_1(\olsi E)\wedge [\omega_0]
		\end{equation*}
\end{lemma}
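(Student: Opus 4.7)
\medskip

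The plan is to decompose $\omega=\omega_0+dd^cF(t)$ and handle the two pieces separately. Since $\olsi{E}|_D$ is flat, the proof of Lemma \ref{good initial metric} (in the version given just above) produces an $H_0$ which is the direct smooth extension of $H_D$ to $\olsi{E}$ on $\olsi{X}$, with no conformal factor $e^{Ct}$ required. Consequently $\frac{\ii}{2\pi}\tr(F_{H_0})$ is a smooth de Rham representative of $c_1(\olsi{E})$ globally on $\olsi{X}$, and for the first piece Chern--Weil gives
\[
\int_X \frac{\ii}{2\pi}\tr(F_{H_0})\wedge\omega_0=\int_{\olsi{X}} \frac{\ii}{2\pi}\tr(F_{H_0})\wedge\omega_0=\int_{\olsi{X}}c_1(\olsi{E})\wedge[\omega_0],
\]
since $D$ has measure zero in $\olsi{X}$.

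The remaining task is to prove
\[
\int_X \tr(F_{H_0})\wedge dd^cF(t)=0.
\]
Absolute integrability is clear: by (\ref{again good initial metric}) we have $|\tr(F_{H_0})|_\omega=O(e^{-\delta t})$, while $dd^cF(t)=\omega-\omega_0$ has $\omega$-norm bounded on $X$, and the volume growth of $(X,\omega)$ is at most polynomial. The Bianchi identity gives $d\tr(F_{H_0})=0$, so I would write
\[
\tr(F_{H_0})\wedge dd^cF(t)=d\bigl(\tr(F_{H_0})\wedge d^cF(t)\bigr).
\]
Since $t\to\infty$ as we approach $D$, for $T$ sufficiently large the domain $X_T:=\{t\leq T\}$ has smooth boundary $\{t=T\}$, and $X_T$ exhausts $X$. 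By dominated convergence and Stokes' theorem,
\[
\int_X \tr(F_{H_0})\wedge dd^cF(t)=\lim_{T\to\infty}\int_{\{t=T\}}\tr(F_{H_0})\wedge d^cF(t)=\lim_{T\to\infty}F'(T)\int_{\{t=T\}}\tr(F_{H_0})\wedge d^c t.
\]

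The main (and only nontrivial) obstacle is to show this boundary term vanishes. I would estimate it using the explicit model-metric expressions collected in Section \ref{assumption on the asymptotic behaviour}: $F'(T)=AaT^{a-1}$, $|d^ct|_\omega=O(T^{1-a/2})$, $|\tr(F_{H_0})|_\omega=O(e^{-\delta T})$, and the induced $3$-dimensional volume of $\{t=T\}$ grows only polynomially in $T$ (computable from the explicit form (\ref{metric tensors on real cosrdinates})). The product of these is $O(e^{-\delta T}T^{N})$ for some fixed $N=N(a)$, which tends to $0$ as $T\to\infty$. This finishes the second piece, and combining it with the Chern--Weil identity above yields the claimed equality.
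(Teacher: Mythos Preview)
Your proof is correct and follows essentially the paper's approach: Chern--Weil handles the $\omega_0$ piece (here $H_0$ extends smoothly over $\olsi X$ since $\lambda=0$ and no conformal factor is needed), and the vanishing of $\int_X\tr(F_{H_0})\wedge dd^cF(t)$ comes from integration by parts plus the exponential decay \eqref{again good initial metric}. The only difference is packaging: the paper first writes $dd^c\varphi=dd^c(\chi\varphi)+d\psi$ with $|\psi|$ of polynomial growth (by the argument of Lemma~\ref{a good 1-form potential}/Remark~\ref{a new 1-form potential}) before integrating by parts, whereas you apply Stokes directly on the sublevel sets $\{t\le T\}$ with $d^cF(t)$ as primitive---both routes reduce to ``exponential decay beats polynomial growth.'' One small correction: your citation of \eqref{metric tensors on real cosrdinates} is to the ample-$N_D$ model, not the present $c_1(N_D)=0$ setting; the polynomial bound on the $3$-volume of $\{t=T\}$ you need follows instead from the at-most-quadratic volume growth recorded after Assumption~3 together with the coarea formula and $|\nabla t|_g\sim t^{1-a/2}$.
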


\begin{proof}
	By Chern-Weil theory, it suffices to show that 
	\begin{equation}\label{again degree vanish}
		\int_{X}\frac{\ii}{2\pi}\tr(F_{H_0})\wedge dd^c\varphi=0.
	\end{equation}
	 The argument in Lemma \ref{a good 1-form potential} can be used again to show that there exists a cut-off function $\chi$ supported on a compact set and a smooth 1-form $\psi$ supported outside a compact set such that
	\begin{equation*}
		dd^c \varphi=d d^c(\chi\varphi)+d\psi.
	\end{equation*} Moreover $|\psi|$ grows at most in a polynomial rate of $r$.
	 Then (\ref{again degree vanish}) follows from integration by parts and  \eqref{again good initial metric}.
\end{proof}

\begin{example}
	Let $\olsi{X}=\mathbb {CP}^1\times D$, where $D$ is a compact Riemann surface. Then $D=\{\infty\}\times D$ is a smooth divisor with trivial normal bundle. Fix a K\"ahler form $\omega_D$ on $D$ and also view it as a form on $\mathbb {CP}^1\times D$ via the pull-back of the obvious projection map. Note that up to a scaling $[\omega_D]\in c_1(\mathbb {CP}^1)$ in $H^{1,1}(\olsi{X})$. We can consider asymptotically cylindrical metrics on $X=\mathbb C\times D$ given by the K\"ahler forms 
	\begin{equation*}
		\omega=\omega_D+\ii\partial \pp \Phi(-\log |z|^2)=\omega_D+\frac{\varphi}{|z|^2}\ii d z\wedge d \olsi{z},
	\end{equation*}
	where $z$ denotes the coordinate function on $\mathbb C$ and $\varphi=\Phi''$ is a positive smooth function defined on $\mathbb R$ such that $\varphi(t)=e^{t}$ when $t$ is sufficiently negative and $\varphi(t)=1$ for $t$ sufficiently positive. Then one can easily check that $(X,\omega)$ satisfies \textit{Assumption 3} with $F(t)=t^2$. Let $\olsi{E}$ be an irreducible holomorphic vector bundle on $\mathbb {CP}^1\times D$ such that $\olsi{E}|_D$ is flat. Then by Theorem \ref{theorem for trivial normal bundle}, we know that 
	\begin{equation*}
		\text{$E$ admits an $\omega$-PHYM metric in $\mathcal P_{H_0}$ if and only if $\olsi{E}$ is $\left(c_1(D),c_1(\mathbb {CP}^1)\right)$-stable.}
	\end{equation*}
 \end{example}
Similar examples as in Example \ref{example of nonsplitting} show that the condition $\left(c_1(D),c_1(\mathbb {CP}^1)\right)$-stability is non-trivial. More specifically, let $D$ be a Riemann surface with genus $g\geq 1$ and $k\geq 2$ be an integer. Then similar argument as in Example \ref{example of nonsplitting} shows that there exists a non-splitting extension \begin{equation*}
	0\longrightarrow\mathcal O\longrightarrow E\longrightarrow p_1^*(\mathcal O_{\mathbb {P}^1}(-k))\longrightarrow0.
\end{equation*} 
whose restriction to $D$ splits. Then one can easily check that $E$ is irreducible and not $\left(c_1(D),c_1(\mathbb {CP}^1)\right)$-stable.

\subsection{Some problems for further study}\label{open problems}
Let $(X,\omega)$ satisfy the \textit{Assumption 1}. As illustrated by Theorem \ref{generalized tianyau} it is more natural to assume a stronger condition on the background K\"ahler metric $\omega$. More precisely, we assume that in (\ref{assumption on asymptotics}) the right hand side is replaced by $O(e^{-\delta_0 r^{\alpha_0}})$ for some $\delta_0, \alpha_0 >0$ and we also have the same bound for higher order derivatives. Under these assumptions and
motivated by the result of Hein \cite{hein2012} for solutions of complex Monge-Amp\`ere equations, we make the following conjecture.

\begin{conjecture}
	The solution $s$ obtained in Proposition \ref{existence result} decays exponentially, i.e $|\nabla^k s|=O(e^{-\delta r^{\alpha}})$ for some $\delta, \alpha >0$ and all $k\geq 0$.
\end{conjecture}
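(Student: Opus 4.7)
The plan is to follow the three-step strategy that Hein used for complex Monge-Amp\`ere equations \cite{hein2012}: first establish pointwise decay of $|s|$ at infinity at some rate, then linearize the PHYM equation around $s=0$, and finally bootstrap from algebraic decay to exponential decay via a maximum principle with exponential barriers.

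First I would improve the pointwise bound on $s$. Under the strengthened assumption on $\omega$, the construction in Lemma \ref{good initial metric} gives $|\Lambda_\omega F_{H_0}^\perp| = O(e^{-\delta_0 r^{\alpha_0}})$. Taking traces in Lemma \ref{basic differential inequlaties}-(2) and using that $H = H_0 e^s$ is PHYM, one gets a scalar differential inequality of the form $\Delta_\omega \operatorname{tr}(e^s) + \operatorname{tr}(e^s)|\bar\partial s|^2_* \leq C|\Lambda F_{H_0}^\perp|$ (or alternatively use Lemma \ref{basic differential inequlaties}-(3) for $\log \operatorname{tr}(h)$). Because $|s|$ is globally bounded and $\operatorname{tr}(e^s) - \operatorname{rank}(E)$ is comparable to $|s|^2$ for small $|s|$, applying the weighted mean value inequality of Lemma \ref{weighted sobolev inequality} on geodesic balls $B_1(x)$ with $r(x) \to \infty$ yields $|s|(x) \to 0$. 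In fact one gets quantitative decay $|s|(x) \leq C r(x)^{-N}$ for any fixed $N$ by iterating Lemma \ref{weighted sobolev inequality} with larger and larger weights, since the source decays exponentially.

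Next I would linearize. Writing $h = e^s = I + s + \tfrac{1}{2}s^2 + \cdots$ and using the PHYM condition on $H$, Lemma \ref{basic differential inequlaties}-(2) rewrites as
\begin{equation*}
\Delta_{H_0} s = -\mathrm{i}\Lambda_\omega F_{H_0}^\perp + N(s,\nabla s),
\end{equation*}
where $N$ is a smooth nonlinearity that is quadratic in $s$ and quadratic in $\bar\partial s$. Once $|s|$ is small at infinity (by step one) the quadratic term $N$ is dominated by $|s|\cdot|\bar\partial s|^2 + |s|^2 \cdot |\Lambda F_{H_0}^\perp|$, both of which are exponentially small after combining the decay of $s$, Bando-Siu interior estimates (Theorem \ref{bando-siu interior estimate}) giving pointwise bounds on $\nabla s$ from sup-norm bounds on $s$, and the exponential decay of $\Lambda F_{H_0}^\perp$. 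Thus one effectively faces a linear equation $\Delta_{H_0} s = f$ with $|f| = O(e^{-\delta_0 r^{\alpha_0}})$ outside a compact set.

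The main obstacle will be the last step: proving that an exponentially small right-hand side forces exponential decay of the solution on $(X,\omega)$. I would attack this by constructing an explicit super-solution: on the model $\mathcal C$, using \eqref{metric tensors on real cosrdinates} to compute $|\nabla r|_{g_F}^2$ and $\Delta_{g_F} r$, one checks that the ansatz $\psi = A e^{-\delta r^\alpha}$ satisfies $-\Delta \psi \gtrsim \delta^2 \alpha^2 r^{2\alpha-2} \psi$ for $\alpha$ chosen consistent with the volume-growth exponent (morally, $\alpha$ should be twice the ``radial conformal factor,'' i.e.\ $\alpha = 2 - 2/a$ or smaller). Then $\psi$ dominates any source term of size $O(e^{-\delta_0 r^{\alpha_0}})$ for $\delta < \delta_0$ and $\alpha \leq \alpha_0$, and the maximum principle applied to $|s|^2 - \psi^2$ (using the scalar inequality $\Delta |s|^2 \geq -C(|f|\cdot|s| + |\nabla s|^2)$ and absorbing the gradient term via Bando-Siu) transfers the decay to $s$ itself. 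The technical difficulty is that the geometry is not asymptotically conical: the model $g_F$ has a contracting $S^1$-direction, so indicial-root analysis at infinity is replaced by this direct barrier construction, and the admissible exponents $(\delta,\alpha)$ will depend delicately on $a$ and on the spectral behaviour of the Laplacian on the link $Y$. Once the $L^\infty$ decay of $s$ is obtained, higher-derivative bounds $|\nabla^k s|(x) = O(e^{-\delta r(x)^\alpha})$ follow by applying Theorem \ref{bando-siu interior estimate} on unit balls $B_1(x)$ with $x \to \infty$, as all derivatives of $F_{H_0}$ decay at the same exponential rate under the strengthened asymptotic assumption.
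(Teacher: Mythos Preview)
The statement you are attempting to prove is stated in the paper as a \emph{conjecture}, not a theorem; the paper offers no proof. The only remark the author makes is that ``the key issue is to prove that $|s|$ decays exponentially, since all of the higher order estimates will follow from standard elliptic estimates,'' which agrees with your final paragraph invoking Theorem~\ref{bando-siu interior estimate} on unit balls. So there is no argument in the paper to compare your proposal against.

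As a proposed strategy your outline is reasonable and in the spirit of Hein's work, but it is not yet a proof and has real gaps. In Step~1, the claim that ``iterating Lemma~\ref{weighted sobolev inequality} with larger and larger weights'' yields $|s|(x)\le Cr(x)^{-N}$ for every $N$ is not justified: Lemma~\ref{weighted sobolev inequality} is a global weighted mean value inequality bounding $\|u\|_{L^\infty}$ by a weighted $L^1$ norm plus a constant, and does not by itself produce pointwise decay at infinity from an exponentially small source. You would need either a local mean value inequality on far-out balls combined with an a~priori smallness statement (which you do not yet have), or already a barrier. In Step~3, which you correctly identify as the crux, the barrier construction is only sketched: the precise computation of $\Delta_{g_F}(e^{-\delta r^{\alpha}})$ on the model $(\mathcal C,g_F)$ using \eqref{metric tensors on real cosrdinates}, the choice of admissible $(\delta,\alpha)$, and the passage from the scalar inequality for $|s|^2$ (which contains a $|\nabla s|^2$ term with the wrong sign) to a clean maximum-principle comparison all remain to be carried out. ``Absorbing the gradient term via Bando--Siu'' is circular unless you already have decay of $s$ on the relevant ball. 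These are exactly the points where the conjecture is genuinely open.
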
 

Note that the key issue is to prove that $|s|$ decays exponentially, since all of the higher order estimates will follows form standard elliptic estimates. 




It is also an interesting problem to study the notion of $(\alpha,\beta)$-stability. From the definition, we have the following consequence:

\begin{proposition}\label{pair stability}
	Let $\alpha,\beta \in H^{1,1}(M)$ be two classes on a compact K\"ahler manifold $M$. Suppose $\alpha\in H^2(M,\mathbb Z)$ and $\alpha\wedge \beta=0$. Then a holomorphic vector bundle $E$ is $(\alpha,\beta)$-stable if and only if $E$ is $\alpha$-semistalbe and there exists an $\epsilon_0>0$ such that $\alpha+\epsilon \beta$-stable for all $0<\epsilon < \epsilon_0$.
\end{proposition}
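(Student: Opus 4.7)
The plan is to reduce the entire equivalence to a single algebraic identity and then perform a short case analysis. Since $\alpha\wedge\beta=0$ as a class in $H^{2,2}(M)$, every mixed monomial $\alpha^{a}\wedge\beta^{b}$ with $a,b\geq 1$ vanishes by associativity of cup product; expanding the binomial therefore gives
\begin{equation*}
(\alpha+\epsilon\beta)^{n-1}=\alpha^{n-1}+\epsilon^{n-1}\beta^{n-1}.
\end{equation*}
Applied to the first Chern class of any coherent reflexive sheaf $S$ of positive rank this yields $\mu_{\alpha+\epsilon\beta}(S)=\mu_{\alpha}(S)+\epsilon^{n-1}\mu_{\beta}(S)$, so that
\begin{equation*}
\mu_{\alpha+\epsilon\beta}(E)-\mu_{\alpha+\epsilon\beta}(S) = \bigl(\mu_{\alpha}(E)-\mu_{\alpha}(S)\bigr) + \epsilon^{n-1}\bigl(\mu_{\beta}(E)-\mu_{\beta}(S)\bigr).
\end{equation*}
This is the identity that drives both directions of the proposition.

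For the ``$\Leftarrow$'' direction I would take a reflexive subsheaf $S$ with $0<\rank(S)<\rank(E)$; $\alpha$-semistability gives $\mu_{\alpha}(S)\leq\mu_{\alpha}(E)$, and if the inequality is strict we land in case (a) of Definition \ref{definition of pair stability}, while if equality holds the $(\alpha+\epsilon\beta)$-stability hypothesis combined with the identity forces $\mu_{\beta}(S)<\mu_{\beta}(E)$, which is case (b). For the ``$\Rightarrow$'' direction, $\alpha$-semistability is built into $(\alpha,\beta)$-stability, so the issue is producing a single $\epsilon_{0}$ that works uniformly over all subsheaves. Subsheaves with $\mu_{\alpha}(S)=\mu_{\alpha}(E)$ cause no trouble, since $(\alpha,\beta)$-stability gives $\mu_{\beta}(E)-\mu_{\beta}(S)>0$ and the displayed difference is then positive for every $\epsilon>0$. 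For subsheaves with $\mu_{\alpha}(S)<\mu_{\alpha}(E)$ the first term is positive but the second could be negative, and I must prevent the second from swamping the first for small $\epsilon$.

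The main obstacle is precisely this uniformity. The first term admits a universal positive lower bound by an integrality argument: setting $r=\rank(E)$, both $r(r-1)\mu_{\alpha}(E)$ and $r(r-1)\mu_{\alpha}(S)$ are integers because $\alpha\in H^{2}(M,\mathbb{Z})$, $c_{1}(\det S)\in H^{2}(M,\mathbb{Z})$, and $\rank(S)\leq r-1$; hence whenever $\mu_{\alpha}(S)<\mu_{\alpha}(E)$ one has $\mu_{\alpha}(E)-\mu_{\alpha}(S)\geq 1/(r(r-1))$. What remains is to bound $\mu_{\beta}(S)-\mu_{\beta}(E)$ from above uniformly in such $S$, which is a boundedness-of-family question. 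I would fix an auxiliary K\"ahler polarization $H$ on $M$ and invoke Grothendieck--Maruyama boundedness for the saturated reflexive subsheaves of the fixed bundle $E$ to conclude that $\{c_{1}(\det S)\}$ lies in a bounded subset of $H^{2}(M,\mathbb{R})$ modulo torsion, so that the linear functional $c\mapsto\int c\wedge\beta^{n-1}$ is bounded above by some constant $C$ on this set. Then $\epsilon_{0}:=(r(r-1)C)^{-1/(n-1)}$ works for every destabilizing subsheaf simultaneously. I expect this boundedness input to be the only nontrivial ingredient; everything else is formal manipulation of the identity above.
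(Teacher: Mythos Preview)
The paper states this proposition without proof, calling it a consequence of the definition, so there is no argument in the text to compare against. Your binomial identity $(\alpha+\epsilon\beta)^{n-1}=\alpha^{n-1}+\epsilon^{n-1}\beta^{n-1}$ is correct and is the right engine for both directions, and your treatment of the ``$\Leftarrow$'' implication and of the equality case $\mu_\alpha(S)=\mu_\alpha(E)$ in the ``$\Rightarrow$'' implication is fine. The integrality step is also right in conclusion, although your justification is slightly off: it is not that $r(r-1)\mu_\alpha(S)$ is an integer, but rather that $rs\bigl(\mu_\alpha(E)-\mu_\alpha(S)\bigr)=s\deg_\alpha(E)-r\deg_\alpha(S)\in\mathbb Z$, which still gives the bound $\geq 1/(r(r-1))$.

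The genuine gap is the boundedness assertion. Grothendieck's lemma does \emph{not} say that the family of all saturated subsheaves of a fixed bundle is bounded; it says that the saturated subsheaves with $\mu_H(S)\geq C$ form a bounded family for each fixed constant $C$ and ample $H$. The full family is typically unbounded: already for $E=\mathcal O^{\oplus 2}$ on $\mathbb P^2$ the saturated rank-one subsheaves are $\mathcal O(-d)$ for all $d\geq 0$, so $\{c_1(S)\}$ is not bounded and $\mu_\beta(S)$ need not be bounded for an arbitrary class $\beta$. Hence you cannot simply take $C=\sup_S(\mu_\beta(S)-\mu_\beta(E))$.

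A correct argument must control $\mu_\beta(S)$ \emph{relative to} the gap $\mu_\alpha(E)-\mu_\alpha(S)$. One way (which also makes clear that some positivity of $\alpha$ is implicitly being used, as in the paper's application $\alpha=c_1(D)$ big and nef) is Chern--Weil: for any fixed K\"ahler form $\omega$ and any representative $\eta$ of $\beta$, writing $F_{S}=\pi F_E\pi-\partial\pi\wedge\bar\partial\pi$ gives $|\mu_\beta(S)|\leq A_\beta+B_\beta\|\bar\partial\pi_S\|_{L^2(\omega)}^2$ with constants depending only on $E,\eta,\omega$. Taking $\omega$ in the class $\alpha$ (here is where positivity enters) and using that $E$ is $\alpha$-semistable yields $\|\bar\partial\pi_S\|_{L^2}^2\leq C\bigl(\mu_\alpha(E)-\mu_\alpha(S)\bigr)$, whence $\mu_\beta(S)-\mu_\beta(E)\leq A'+B'\bigl(\mu_\alpha(E)-\mu_\alpha(S)\bigr)$. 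Combined with your integrality lower bound this gives a uniform upper bound on the ratio and hence a valid $\epsilon_0$. Alternatively, under a K\"ahler hypothesis on $\alpha$ you can invoke Grothendieck's lemma for $\mu_\alpha(S)\geq \mu_\alpha(E)-1$ to get finiteness of the relevant $c_1$'s in that range, and use the linear estimate above only for the tail. Either way, you should replace the bare appeal to ``boundedness of all saturated subsheaves'' by one of these refinements.
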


It is natural to consider the following problem. Let $\olsi{E}$ be a $(c_1(D),[\omega_0])$-stable holomorphic vector bundle on $\olsi{X}$. Then by Proposition \ref{pair stability}, we know that $\olsi{E}$ is $[\omega_0]+\epsilon^{-1}c_1(D)$-stable for $\epsilon$ positive and sufficiently small. Donaldson-Uhlenbeck-Yau theorem says that for every K\"ahler form $\omega$ in $[\omega_0]+\epsilon^{-1}c_1(D)$, there exists an $\omega$-Hermitian-Yang-Mills metric on $\olsi{E}$. In our setting, it is natural to consider the following K\"ahler forms
\begin{equation}
	\omega_{\epsilon}=\omega_0+\epsilon^{-1}\theta_{\epsilon}+dd^c(\chi_{\epsilon} \varphi) \in [\omega_0]+\epsilon^{-1}c_1(D),
\end{equation}
where $\theta_{\epsilon}$ is a closed nonnegative (1,1)-form in $c_1(D)$, supported and positive in an $\epsilon$-neighborhood of $D$ and $\chi_{\epsilon}$ is a cut-off function which equals 1 outside an $\epsilon$-neighborhood of $D$ and $0$ in a smaller neighborhood of $D$. The K\"ahler forms are chosen such that 
\begin{equation*}
	\omega_{\epsilon}\longrightarrow \omega=\omega_0+dd^c\varphi \text{   in $C^{\infty}_{loc}(X)$}.
\end{equation*}

\begin{conjecture}
	Suppose $\olsi{E}|_D$ is $c_1(N_D)$-polystable and $(c_1(D),[\omega_0])$-stable. Let $H_{\epsilon}$ denote the Hermitian-Yang-Mills metric on $\olsi{E}$ with respect to the K\"ahler form $\omega_{\epsilon}$. Then there exists smooth functions $f_{\epsilon}$ on $X$ such that 
	\begin{equation*}
		H_{\epsilon}e^{f_{\epsilon}}\longrightarrow H \text{ in $C^{\infty}_{loc}(X)$},
	\end{equation*}
	where $H$ is the hermitian metric constructed in Theorem \ref{main theorem}.
\end{conjecture}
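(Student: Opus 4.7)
The strategy is the classical a priori estimate plus compactness plus uniqueness scheme, using Theorem \ref{main theorem} to identify the limit. First I would normalize by setting $f_{\epsilon} := \frac{1}{r}\log(\det H_0/\det H_{\epsilon})$, where $r = \rank(\olsi{E})$, so that $\tilde H_{\epsilon} := H_{\epsilon} e^{f_{\epsilon}}$ is $\omega_{\epsilon}$-PHYM (by conformal invariance of the PHYM equation) and $\det \tilde H_{\epsilon} = \det H_0$. Set $h_{\epsilon} := H_0^{-1}\tilde H_{\epsilon}$ and $s_{\epsilon} := \log h_{\epsilon}$, a trace-free self-adjoint endomorphism of $E$ over $X$. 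On every fixed compact $K \subset X$ and for $\epsilon$ small enough, $\omega_{\epsilon} = \omega$ in a neighborhood of $K$, so $\tilde H_{\epsilon}$ is $\omega$-PHYM there; given a uniform $L^{\infty}(K)$ bound on $s_{\epsilon}$, Bando-Siu's Theorem \ref{bando-siu interior estimate} immediately yields $C^k_{loc}(X)$ convergence along a subsequence to a smooth $H_{\infty} = H_0 e^{s_{\infty}}$ that is $\omega$-PHYM on $X$.

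The heart of the argument is the uniform $L^{\infty}_{loc}$ bound on $s_{\epsilon}$. I would run Simpson's renormalization from Proposition \ref{existence result} in this family setting: assuming $\sup_K |s_{\epsilon}| \to \infty$, rescale $u_{\epsilon} := s_{\epsilon}/\ell_{\epsilon}$ by a suitable weighted norm $\ell_{\epsilon}$, extract a weak $W^{1,2}_{loc}(X)$ limit $u_{\infty}$, and produce a nontrivial spectral projector $\pi$ of $u_{\infty}$ that is a weakly holomorphic projection with $\pp\pi \in L^2(X,\omega,H_0)$ and, by the analog of Lemma \ref{degree exceed},
\begin{equation*}
\frac{1}{\tr(\pi)}\Big(\ii\int_X \tr(\pi\,\Lambda_\omega F_{H_0}) - \int_X |\pp\pi|^2\Big) \geq \frac{1}{r}\,\ii\int_X \tr(\Lambda_\omega F_{H_0}).
\end{equation*}
By Lemma \ref{key observation}, $\pi = \pi_{\olsi{S}}^{H_0}$ for a coherent reflexive subsheaf $\olsi{S}$ of $\olsi{E}$ whose restriction to $D$ is a splitting factor of $\olsi{E}|_D$, hence $\mu_{c_1(D)}(\olsi{S}) = \mu_{c_1(D)}(\olsi{E})$ by \cite{mistretta}. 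Combined with Lemma \ref{degree coincide}, the displayed inequality forces $\mu_{[\omega_0]}(\olsi{S}) \geq \mu_{[\omega_0]}(\olsi{E})$, contradicting the $(c_1(D),[\omega_0])$-stability of $\olsi{E}$.

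To drive this inequality through in the $\epsilon\to 0$ limit I would exploit that on the compact manifold $\olsi X$ the metric $\tilde H_{\epsilon}$ minimizes Mochizuki's functional $\mathcal M_{\omega_{\epsilon}}(H_0,\cdot)$, but with the background $H_0$ singular along $D$. The integral should be split on a shrinking neighborhood of $D$, where the singular contribution is controlled by the explicit conformal factor in (\ref{definition of H_0}) (since $H_{\epsilon}$ itself is smooth on $\olsi X$), and its complement, where $\omega_{\epsilon}=\omega$ and the argument of Proposition \ref{existence result} applies almost verbatim. Once the uniform bound is in hand, a cutoff integration by parts, as in the last paragraph of the proof of Proposition \ref{existence result}, gives $\|\pp s_{\infty}\|_{L^2}<\infty$, so $H_{\infty}\in\mathcal P_{H_0}$; by the uniqueness part of Theorem \ref{main theorem}, $H_{\infty}=H$, and then the full family (not just a subsequence) converges.

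The hard part will be producing the uniform $L^{\infty}_{loc}$ estimate in $\epsilon$. The weighted mean value inequality (Lemma \ref{weighted sobolev inequality}) depends crucially on the $(K,\alpha,\beta)$-polynomial growth of $(X,\omega)$ established in Lemma \ref{vanishing at infinity decay}, and this is \emph{not} uniform across the family $(X,\omega_{\epsilon})$ since the volume of $\omega_{\epsilon}$ blows up near $D$. One will need either a uniform weighted inequality on the shrinking truncations $\olsi X\setminus U_{\epsilon}$ together with explicit estimates near $D$ that use the smoothness of $H_{\epsilon}$ on $\olsi X$, or a cleaner compact-manifold argument that separates the singular conformal factor of $H_0$ from the trace-free endomorphism $s_{\epsilon}$ so as to avoid spurious divergences in the renormalization step. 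A secondary but related difficulty is to show that the weak limit $u_{\infty}$ produced from this singular family is genuinely nontrivial, i.e. does not concentrate on $D$.
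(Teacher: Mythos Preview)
The statement you are attempting to prove is labeled as a \emph{Conjecture} in the paper and appears in Section~\ref{open problems} (``Some problems for further study''). The paper does not supply a proof; it is posed as an open problem. So there is no ``paper's own proof'' against which to compare your proposal.

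That said, your outline is a sensible plan of attack, and you have correctly located the genuine obstruction: the weighted mean value inequality of Lemma~\ref{weighted sobolev inequality} rests on the $(K,\alpha,\beta)$-polynomial growth of $(X,\omega)$, and this fails uniformly for the family $(\olsi X,\omega_{\epsilon})$ as $\epsilon\to 0$. Without a replacement for this step, the Simpson-type renormalization argument cannot be made to close. Your two suggested workarounds (a uniform weighted inequality on shrinking truncations together with explicit near-$D$ control coming from the smoothness of $H_{\epsilon}$ on $\olsi X$, or a reorganization that separates the singular conformal factor of $H_0$ from the trace-free part $s_{\epsilon}$) are both plausible directions, but neither is established in the paper and each would require substantial new analysis. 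In particular, the near-$D$ region carries volume of order $\epsilon^{-1}$ under $\omega_{\epsilon}$, while $|\Lambda_{\omega_{\epsilon}} F_{H_0}^{\perp}|$ does not obviously decay there at a rate that compensates; making the Donaldson functional inequality $\mathcal M_{\omega_{\epsilon}}(H_0,\tilde H_{\epsilon})\leq 0$ useful in the limit therefore requires a careful quantitative splitting that is not available from the results proved in the paper.

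A secondary point: your claim that ``$\tilde H_{\epsilon}$ minimizes Mochizuki's functional $\mathcal M_{\omega_{\epsilon}}(H_0,\cdot)$'' is not quite what is used in the paper. Theorem~\ref{phym metrics minimizing donaldson functional} is stated for compact manifolds \emph{with boundary} and Dirichlet boundary data; on the closed manifold $\olsi X$ one has the corresponding inequality for the HYM metric, but with the singular reference $H_0$ this needs justification (the functional involves $\Lambda_{\omega_{\epsilon}} F_{H_0}$, which is not integrable on $\olsi X$ without further argument). This is part of the same near-$D$ difficulty you already flagged.

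In short: your strategy is reasonable and aligns with the machinery of the paper, but the uniform $C^0$ estimate across the degenerating family is exactly the missing ingredient, and resolving it would amount to proving the conjecture.
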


\bibliographystyle{plain}
\bibliography{ref.bib}

\end{document}